\newtheorem{defi}{Definition}
\newtheorem{thm}{Theorem}
\newtheorem{lem}{Lemma}
\newtheorem{rem}{Remark}
\newcommand{\blue}{\textcolor{black}} 
\newcommand{\bbeta}{\bm{\beta}}
\newcommand{\bgamma}{\bm{\gamma}}
\newcommand{\bxi}{\bm{\xi}}
\newcommand{\bx}{\bm{x}}
\newcommand{\bz}{\bm{z}}
\newcommand{\bw}{\bm{w}}
\newcommand{\mR}{\mathbb{R}}
\newcommand{\mB}{\mathbb{B}}
\newcommand{\mP}{\mathbb{P}}
\newcommand{\mQ}{\mathbb{Q}}
\newcommand{\mE}{\mathbb{E}}
\newcommand*\diff{\mathop{}\!\mathrm{d}}
\newcommand{\indicate}[1]{\mathds{1}\small{[#1]}}
\title{Wasserstein Logistic Regression with Mixed Features}
\author{%
  Aras Selvi \quad Mohammad Reza Belbasi \quad Martin B. Haugh \quad  Wolfram Wiesemann\\
  Imperial College Business School, Imperial College London, United Kingdom\\
  \texttt{\{a.selvi19, r.belbasi21, m.haugh, ww\}@imperial.ac.uk} \\
}
\begin{document}

\maketitle

\begin{abstract}
    Recent work has leveraged the popular distributionally robust optimization paradigm to combat overfitting in classical logistic regression. While the resulting classification scheme displays a promising performance in numerical experiments, it is inherently limited to numerical features. In this paper, we show that distributionally robust logistic regression with mixed (\emph{i.e.}, numerical and categorical) features, despite amounting to an optimization problem of exponential size, admits a polynomial-time solution scheme. We subsequently develop a practically efficient column-and-constraint approach that solves the problem as a sequence of polynomial-time solvable exponential conic programs. Our model retains many of the desirable theoretical features of previous works, but---in contrast to the literature---it does not admit an equivalent representation as a regularized logistic regression, that is, it represents a genuinely novel variant of logistic regression. We show that our method outperforms both the unregularized and the regularized logistic regression on categorical as well as mixed-feature benchmark instances.
\end{abstract}

\section{Introduction}\label{sec:introduction}

Consider a data set $(\bm{x}^i, y^i)_{i = 1}^N$ with feature vectors $\bm{x}^i$ and associated binary labels $y^i \in \{ -1, 1 \}$. Classical logistic regression assumes that the labels depend probabilistically on the features via
\begin{equation*}
    \mbox{Prob} (y \mid \bx) = \left[ 1 + \exp (-y \cdot [\beta_0 + \bbeta^\top \bx]) \right]^{-1},
\end{equation*}
where the parameters $(\beta_0, \bbeta) \in \mathbb{R}^{1+n}$ are estimated from the empirical risk minimization problem
\begin{equation*}
    \begin{array}{l@{\quad}l}
        \displaystyle \mathop{\text{minimize}}_{(\beta_0, \bbeta)} & \displaystyle \frac{1}{N} \sum_{i=1}^N l_{\bbeta}(\bx^i, y^i) \\[6mm]
        \displaystyle \text{subject to} & \displaystyle (\beta_0, \bbeta) \in \mathbb{R}^{n+1}
    \end{array}
\end{equation*}
with the \emph{log-loss} function $l_{\bbeta} (\bx, y) := \log \left( 1 + \exp \left( -y \cdot [\beta_0 + \bm{\bbeta}^\top \bx] \right) \right)$. 
Its compelling performance across many domains, the availability of mature and computationally efficient algorithms as well as its interpretability have all contributed to the widespread adoption of logistic regression \cite{bishop2006pattern, hastie2009elements, murphy2022}.

Similar to other machine learning models, logistic regression is prone to overfitting, especially when the number of training samples is small relative to the number of considered features. Moreover, logistic regression can be sensitive to erroneous feature and label values as well as distribution shifts under which the training and test sets stem from different distributions. In recent years, distributionally robust (DR) optimization \cite{BTEGN09:rob_opt, BdH22:rob_opt} has been proposed to simultaneously address these challenges. To this end, the DR optimization paradigm models a machine learning task as a zero-sum game between the decision maker, who seeks to obtain the most accurate model (\emph{e.g.}, the parameter vector $\bm{\beta}$ in a logistic regression), and a fictitious adversary who observes the decision maker's model and subsequently selects the worst data-generating distribution in the vicinity of the empirical distribution formed from the available training data. In this context, the similarity of distributions is commonly measured in terms of moment bounds \cite{DY10:distr_rob_opt, WKS14:DRCO}, $\phi$-divergences such as the Kullback-Leibler divergence \cite{HH12:kl_divergence, L19:kl_divergence} or the Wasserstein distance \cite{Gao_Kleywegt_2016, med17}. It has been observed that in many cases, the resulting DR machine learning models are equivalent to regularized versions of the same models, in which case distributionally robustness provides a new perspective on regularizers that are often selected \emph{ad hoc} \cite{kmns19, XCM09:regularization}. Most importantly, DR machine learning models often admit computationally efficient reformulations as finite-dimensional convex optimization problems that can be solved in polynomial time with off-the-shelf solvers.

In this work we study a DR variant of the logistic regression where the distance between the empirical distribution and the unknown true data-generating distribution is measured by the popular Wasserstein (also known as Kantorovich-Rubinstein or earth mover's) distance \cite{Gao_Kleywegt_2016, med17}. This problem has been first studied by \cite{NIPS2015}, who show that the resulting DR logistic regression problem admits an equivalent reformulation as a polynomial-size convex optimization problem \emph{if all features are numerical}. Applying similar techniques to the mixed-feature logistic regression problem, which appears to be most prevalent in practice, would result in an exponential-size convex optimization problem whose na\"ive solution with the methods of \cite{NIPS2015} does not scale to interesting problem sizes (\emph{cf.}~Section~\ref{sec:numericals}). Our contributions may be summarized as follows. \vspace{-.15cm}
\begin{enumerate}
    \item[\emph{(i)}] On the \emph{theoretical side}, we show that the complexity of DR mixed-feature regression crucially relies on the selected loss function. In particular, for the log-loss function employed in logistic regression, the problem---despite its natural representation as an exponential-size convex optimization problem---admits a polynomial-time solution scheme. We also show that in stark contrast to earlier variants of the problem, our mixed-feature regression does \emph{not} admit an equivalent representation as a regularized problem. \blue{This provides compelling evidence that the DR logistic regression problem with mixed features is fundamentally different to the DR problem with only numerical features.}
    \item[\emph{(ii)}] On the \emph{computational side}, we propose a column-and-constraint scheme that solves the DR mixed-feature logistic regression as a sequence of polynomial-time solvable exponential conic programs. We show that the key step of our procedure, the identification of the most violated constraint, can be implemented efficiently for a broad range of metrics, despite its natural representation as a combinatorial optimization problem. \blue{Indeed, identifying the most violated constraint by brute force is out of the question, while the standard approach \cite{ZENG2013457} of solving the most violated constraint problem would have an unacceptably high runtime.} 
    \item[\emph{(iii)}] On the \emph{numerical side}, we show that our column-and-constraint scheme drastically reduces computation times over a na\"ive monolithic implementation of the regression problem. We also show that our model performs favorably on standard categorical and mixed-feature benchmark instances when compared against classical and regularized logistic regression.
\end{enumerate}

The literature on DR machine learning under the Wasserstein distance is vast and rapidly growing. Recent works have considered, among others, the use of Wasserstein DR models in multi-label learning \cite{NIPS2015_a9eb8122}, generative adversarial networks \cite{pmlr-v70-arjovsky17a, NEURIPS2019_f35fd567} and the generation of adversarial examples \cite{pmlr-v97-wong19a}, density estimation \cite{pmlr-v99-weed19a} and learning Gaussian mixture models \cite{Kolouri_2018_CVPR}, graph-based semi-supervised learning \cite{pmlr-v32-solomon14}, supervised dimensionality reduction \cite{FCCR18:wasserstein} and reinforcement learning \cite{DBLP:journals/corr/abs-1907-13196}. \blue{DR has also been found to help alleviate problems with overfitting \cite{Esfahani-Kuhn-2018,kmns19}, label uncertainty \cite{kmns19} and distribution shifts \cite{pmlr-v139-taskesen21a,NEURIPS2021_f8689009}.} We refer to \cite{kmns19} for a recent review of the literature. Our work is most closely related to \cite{NIPS2015}, and we compare our findings with the results of that work in Section~\ref{sec:mixed_feature_lr}.

We proceed as follows. Section~\ref{sec:mixed_feature_lr} defines and analyzes the mixed-feature DR logistic regression problem, which is solved in Section~\ref{sec:cac_generation} via column-and-constraint generation. We report numerical results in Section~\ref{sec:numericals}. Auxiliary material and all proofs are relegated to the appendix.

\textbf{Notation.} We define $\mathbb{B} = \{ 0, 1 \}$ and $[N] = \{ 1, \ldots, N \}$ for $N \in \mathbb{N}$. The set of all probability distributions supported on a set $\Xi$ is denoted by $\mathcal{P}_0 (\Xi)$, while the Dirac distribution placing unit probability mass on $\bm{x} \in \mathbb{R}^n$ is denoted by $\delta_{\bm{x}} \in \mathcal{P}_0 (\mathbb{R}^n)$. The indicator function $\indicate{\mathcal{E}}$ attains the value $1$ ($0$) whenever the expression $\mathcal{E}$ is (not) satisfied. \blue{Finally, we use Roman $\mathrm{d}$ to denote differentials and to distinguish them from the $d$ used to denote distances. }

\section{Mixed-Feature DR Logistic Regression}\label{sec:mixed_feature_lr}

Section~\ref{sec:mixed_feature_lr:exponential_reform} derives an exponential-size convex programming formulation of the DR logistic regression problem that serves as the basis of our analysis. Section~\ref{sec:comparison_to_continuous} compares our formulation with a na\"ive model that treats categorical features as continuous ones. Section~\ref{sec:mixed_feature_lr:complexity}, finally, shows that our formulation, despite its exponential size, can be solved in polynomial time due to the benign structure of the log-loss function. It also establishes that in stark contrast to the literature, our formulation does not reduce to a regularized non-robust logistic regression, that is, it constitutes a genuinely novel variant of the logistic regression problem.

Our formulation enjoys strong finite-sample and asymptotic performance guarantees from the literature. Moreover, despite its exponential size, our model always accommodates worst-case distributions that exhibit a desirable sparsity pattern. We relegate these results to Appendix~A.

\subsection{Formulation as an Exponential-Size Convex Program}\label{sec:mixed_feature_lr:exponential_reform}

From now on, we consider a mixed-feature data set $\{ \bm{\xi}^i := (\bm{x}^i, \bm{z}^i, y^i) \}_{i \in [N]}$ with $n$ numerical features $\bm{x}^i = (x^i_1, \ldots, x^i_n) \in \mathbb{R}^n$, $m$ categorical features $\bm{z}^i = (\bm{z}^i_1, \ldots, \bm{z}^i_m) \in \mathbb{C} (k_1) \times \ldots \times \mathbb{C} (k_m)$ and a binary label $y^i \in \{ -1, + 1 \}$. Here, $\mathbb{C} (s) = \{ \bm{z} \in \mathbb{B}^{s - 1} \, : \, \sum_{j \in [s-1]} z_j \leq 1 \}$ for $s \in \mathbb{N} \setminus \{ 1 \}$ represents the one-hot encoding of a categorical feature with $s$ possible values; in particular, $\mathbb{C} (2) = \mathbb{B}$ encodes a binary feature. We denote by $\mathbb{C} = \mathbb{C} (k_1) \times \ldots \times \mathbb{C} (k_m)$ and $\Xi = \mathbb{R}^n \times \mathbb{C} \times \{ -1, +1 \}$ the support of the categorical features as well as the data set, respectively, and we let $k = k_1 + \ldots + k_m - m$ be the number of slopes used for the categorical features.

If we had access to the true data-generating distribution $\mathbb{P}^0 \in \mathcal{P}_0 (\Xi)$, we would solve the (non-robust) logistic regression problem
\begin{equation*}
    \begin{array}{l@{\quad}l}
        \displaystyle \mathop{\text{minimize}}_{\bm{\beta}} & \displaystyle \mathbb{E}_{\mathbb{P}^0} \left[ l_{\bm{\beta}} (\bm{x}, \bm{z}, y) \right] \\
        \displaystyle \text{subject to} & \displaystyle \bm{\beta} = (\beta_0, \bm{\beta}_{\text{N}}, \bm{\beta}_{\text{C}}) \in \mathbb{R}^{1+n + k },
    \end{array}
\end{equation*}
where $\mathbb{E}_{\mathbb{P}^0}$ denotes the expectation under $\mathbb{P}^0$, and where the log-loss function $l_{\bm{\beta}}$ now takes the form
\begin{equation*}
    l_{\bm{\beta}} (\bm{x}, \bm{z}, y) := \log \left( 1 + \exp \left[ -y \cdot \left( \beta_0 + \bm{\beta}_{\text{N}}{}^\top \bm{x} + \bm{\beta}_{\text{C}}{}^\top \bm{z} \right) \right] \right)
\end{equation*}
to account for the presence of categorical features. Since the distribution $\mathbb{P}^0$ is unknown in practice, the empirical risk minimization problem replaces $\mathbb{P}^0$ with the empirical distribution $\widehat{\mathbb{P}}_N := \frac{1}{N} \sum_{i = 1}^N \delta_{\bm{\xi}^i}$ that places equal probability mass on all observations $\{ \bm{\xi}^i \}_{i \in [N]}$. Standard arguments show that when these observations are i.i.d., the empirical risk minimization problem recovers the logistic regression under $\mathbb{P}^0$ as $N \longrightarrow \infty$. In practice, however, data tends to be scarce, and the empirical risk minimization problem exhibits an `optimism bias' that is also known as overfitting \cite{bishop2006pattern, hastie2009elements, murphy2022}, the error maximization effect of optimization \cite{demiguel2009portfolio, michaud1989markowitz} or the optimizer's curse \cite{smith2006optimizer}.

DR logistic regression combats the aforementioned overfitting phenomenon by solving the semi-infinite optimization problem
\begin{equation}\label{eq:the_mother_of_all_problems}
    \begin{array}{l@{\quad}l}
        \displaystyle \mathop{\text{minimize}}_{\bm{\beta}} & \displaystyle \sup_{\mathbb{Q} \in \mathfrak{B}_\epsilon (\widehat{\mathbb{P}}_N)} \; \mathbb{E}_\mathbb{Q} \left[ l_{\bm{\beta}} (\bm{x}, \bm{z}, y) \right] \\[5mm]
        \displaystyle \text{subject to} & \displaystyle \bm{\beta} = (\beta_0, \bm{\beta}_{\text{N}}, \bm{\beta}_{\text{C}}) \in \mathbb{R}^{1 + n + k},
    \end{array}
\end{equation}
where the \emph{ambiguity set} $\mathfrak{B}_\epsilon (\widehat{\mathbb{P}}_N)$ contains all distributions $\mathbb{Q}$ in a (soon to be defined) vicinity of the empirical distribution $\widehat{\mathbb{P}}_N$, and where the expectation is taken with respect to $\mathbb{Q}$. Problem~\eqref{eq:the_mother_of_all_problems} can be interpreted as a zero-sum game between the decision maker, who chooses a logistic regression model parameterized by $\bm{\beta}$, and a fictitious adversary that observes $\bm{\beta}$ and subsequently chooses the `worst' distribution (in terms of the incurred log-loss) from $\mathfrak{B}_\epsilon (\widehat{\mathbb{P}}_N)$. Contrary to the classical non-robust logistic regression, problem~\eqref{eq:the_mother_of_all_problems} guarantees to \emph{over}estimate the log-loss incurred by $\bm{\beta}$ under the unknown true distribution $\mathbb{P}^0$ as long as $\mathbb{P}^0$ is contained in $\mathfrak{B}_\epsilon (\widehat{\mathbb{P}}_N)$. On the other hand, problem~\eqref{eq:the_mother_of_all_problems} recovers the empirical risk minimization problem when the ambiguity set $\mathfrak{B}_\epsilon (\widehat{\mathbb{P}}_N)$ approaches a singleton set that only contains the empirical distribution $\widehat{\mathbb{P}}_N$. Note that problem~\eqref{eq:the_mother_of_all_problems} is convex as the convexity of the logistic regression objective $\mathbb{E}_\mathbb{Q} \left[ l_{\bm{\beta}} (\bm{x}, \bm{z}, y) \right]$ is preserved under the supremum operator. That said, the problem typically constitutes a semi-infinite program as {\color{black}it} comprises finitely many decision variables but---if the embedded supremum in the objective is brought to the constraints via an epigraph reformulation---infinitely many constraints whenever the ambiguity set harbors infinitely many distributions. As such, it is not obvious how the problem can be solved efficiently.

In this paper, we choose as ambiguity set the Wasserstein ball $\mathfrak{B}_\epsilon (\widehat{\mathbb{P}}_N) := \{ \mathbb{Q} \in \mathcal{P}_0 (\Xi) \, : \, \mathrm{W} (\mathbb{Q}, \widehat{\mathbb{P}}_N) \leq \epsilon \}$ of radius $\epsilon > 0$ that is centered at the empirical distribution $\widehat{\mathbb{P}}_N$.

\begin{defi}[Wasserstein Distance]\label{def:wasserstein}
    The type-1 \emph{Wasserstein} (\emph{Kantorovich-Rubinstein}, or \emph{earth mover's}) \emph{distance} between two distributions $\mP \in \mathcal{P}_0 (\Xi)$ and $\mQ \in \mathcal{P}_0 (\Xi)$ is defined as
    \begin{equation}\label{eq:Wass}
        \mspace{-11mu}
        \mathrm{W} (\mQ, \mP) := \inf_{\Pi \in \mathcal{P}_0 (\Xi^2)} \left\{ \int_{\Xi^2} d(\bxi,\bxi') \, \Pi(\mathrm{d} \bxi, \mathrm{d} \bxi')  \ : \  \Pi(\mathrm{d} \bxi, \Xi) = \mQ(\mathrm{d} \bxi), \, \Pi(\Xi, \mathrm{d} \bxi') = \mP(\mathrm{d} \bxi') \right\},
    \end{equation}
    where $\bxi = (\bx,\bz,y) \in \Xi$ and $\bxi' = (\bx',\bz',y') \in \Xi$, while $d(\bxi,\bxi')$ is the ground metric on $\Xi$.
\end{defi}

The Wasserstein distance can be interpreted as the minimum cost of moving $\mQ$ to $\mP$ when $d(\bxi,\bxi')$ is the cost of moving a unit mass from $\bxi$ to $\bxi'$. The Wasserstein radius $\epsilon$ thus imposes a budget on the transportation cost that the adversary can spend on perturbing the empirical distribution $\widehat{\mathbb{P}}_N$. 

We next define the ground metric $d$ that we use throughout the paper.

\begin{defi}[Ground Metric] \label{def:metric}
    We measure the distance between two data-points $\bxi = (\bx,\bz,y) \in \Xi$ and $\bxi'=(\bx',\bz',y') \in \Xi$ with $\bm{z} = (\bm{z}_1, \ldots, \bm{z}_m)$ and $\bm{z}' = (\bm{z}'_1, \ldots, \bm{z}'_m)$ as
    \begin{subequations}\label{eq:ground_metric}
        \begin{equation}\label{eq:ground_metric:overall}
            d (\bxi,\bxi') := \lVert \bx-\bx' \rVert + d_{\text{\emph{C}}} (\bz,\bz') + \kappa \cdot \indicate{y \neq y'},
        \end{equation}
        where $\lVert \cdot \rVert$ is any rational norm on $\mR^n$, $\kappa > 0$ and the metric $d_{\text{\emph{C}}}$ on $\mathbb{C}$ satisfies
        \begin{equation}\label{eq:ground_metric:categorical}
            d_{\text{\emph{C}}} (\bz,\bz') := \Big( \sum_{i \in [m]} \indicate{\bm{z}_i \neq \bm{z}'_i} \Big) ^{1 / p}
            \quad \text{for some } p > 0.
        \end{equation}
    \end{subequations}
\end{defi}

Intuitively, the ground metric $d$ measures the distance of the numerical features $\bm{x}$ and $\bm{x}'$ by the norm distance $\lVert \bx-\bx' \rVert$, whereas the distance of the categorical features $\bm{z}$ and $\bm{z}'$ is measured by (a power of) the number of discrepancies between $\bm{z}$ and $\bm{z}'$. Likewise, any discrepancy between the labels $y$ and $y'$ is accounted for by a constant $\kappa$, which allows for scaling between the features and the labels.

We first extend the convex optimization model of \cite{NIPS2015} for the DR continuous-feature logistic regression to an exponential conic model that accommodates for categorical features.

\begin{thm}[Exponential Conic Representation]\label{thm:coneheads}
    The DR logistic regression problem~\eqref{eq:the_mother_of_all_problems} admits the equivalent reformulation
    \begin{equation}\label{eq:cone_head_reformulation}
        \mspace{-40mu}
        \begin{array}{l@{\quad}l}
            \displaystyle \mathop{\text{\emph{minimize}}}_{\bbeta, \lambda, \bm{s}, \bm{u}^\pm, \bm{v}^\pm} & \displaystyle \lambda \epsilon + \dfrac{1}{N}\sum_{i \in [N]} s_i \\[6mm]
            \displaystyle \raisebox{4.5mm}{$\mspace{6mu}$ \text{\emph{subject to}}} &
            \mspace{-10mu} \left. \begin{array}{l}
                \displaystyle u_{i,\bz}^{+} + v_{i,\bz}^{+} \leq 1 \\
                \displaystyle (u_{i,\bz}^{+}, 1, -s_i - \lambda d_\text{\emph{C}}(\bz,\bz^i)) \in \mathcal{K}_{\exp} \\
                \displaystyle (v_{i,\bz}^{+}, 1, -y^i\bbeta_{\text{\emph{N}}}{}^\top \bx^i -y^i\bbeta_{\text{\emph{C}}}{}^\top \bz - y^i \beta_0 -s_i - \lambda d_\text{\emph{C}}(\bz,\bz^i)) \in \mathcal{K}_{\exp}
            \end{array}
            \mspace{29mu} \right\} \displaystyle \forall i \in [N], \; \forall \bz \in \mathbb{C} \\[6mm]
            & \mspace{-10mu} \left. \begin{array}{l}
                \displaystyle u_{i,\bz}^{-} + v_{i,\bz}^{-} \leq 1 \\
                \displaystyle (u_{i,\bz}^{-}, 1, -s_i - \lambda d_\text{\emph{C}}(\bz,\bz^i) - \lambda \kappa) \in \mathcal{K}_{\exp} \\
                \displaystyle (v_{i,\bz}^{-}, 1, y^i\bbeta_{\text{\emph{N}}}{}^\top \bx^i + y^i\bbeta_{\text{\emph{C}}}{}^\top \bz + y^i \beta_0 -s_i - \lambda d_\text{\emph{C}}(\bz,\bz^i) - \lambda \kappa) \in \mathcal{K}_{\exp}
            \end{array}
            \right\} \displaystyle \forall i \in [N], \; \forall \bz \in \mathbb{C} \\[6mm]
            & \displaystyle ||\bbeta_{\text{\emph{N}}}||_* \leq \lambda \\[1.5mm]
            & \displaystyle \bm{\beta} = (\beta_0, \bm{\beta}_{\text{\emph{N}}}, \bm{\beta}_{\text{\emph{C}}}) \in \mathbb{R}^{1+n + k }, \;\; \lambda \geq 0, \;\; \bm{s} \in \mathbb{R}^N \\
            & \displaystyle (u_{i,\bz}^{+}, v_{i,\bz}^{+}, u_{i,\bz}^{-}, v_{i,\bz}^{-}) \in \mathbb{R}^4, \, i \in [N] \text{\emph{ and }} \bz \in \mathbb{C}
        \end{array}
    \end{equation}
    as a finite-dimensional exponential conic program, where $\mathcal{K}_{\exp}$ denotes the \emph{exponential cone}
\begin{align*}
    \mathcal{K}_{\exp} := \mathrm{cl} \left(\left\{(a,b,c) \ : \ a \geq b \cdot \exp(c / b), \ a > 0, \ b > 0 \right\}\right) \subset \mathbb{R}^3.
\end{align*}
\end{thm}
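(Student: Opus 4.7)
The plan is to derive the reformulation in four stages that mirror the standard Wasserstein DRO template of Esfahani--Kuhn and Shafieezadeh-Abadeh et al., while carefully handling the mixed continuous/categorical support. First, I would apply strong duality to the inner worst-case expectation in~\eqref{eq:the_mother_of_all_problems}, introducing a dual variable $\lambda \geq 0$ associated with the Wasserstein radius and obtaining
\begin{equation*}
    \sup_{\mathbb{Q} \in \mathfrak{B}_\epsilon(\widehat{\mathbb{P}}_N)} \mathbb{E}_{\mathbb{Q}}\!\left[ l_{\bbeta}(\bxi) \right] \;=\; \inf_{\lambda \geq 0} \left\{ \lambda \epsilon + \frac{1}{N} \sum_{i \in [N]} \sup_{\bxi \in \Xi} \bigl[ l_{\bbeta}(\bxi) - \lambda \, d(\bxi, \bxi^i) \bigr] \right\}.
\end{equation*}
The dual variable $\lambda$ then appears linearly in the outer objective as $\lambda \epsilon$, matching the first term of~\eqref{eq:cone_head_reformulation}.

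Second, I would evaluate each inner supremum by decomposing $\bxi = (\bx, \bz, y')$ according to the product structure of $\Xi$. Because $\mathbb{C}$ is finite and $y' \in \{-1,+1\}$, the sup splits into a maximum over $y' \in \{\pm y^i\}$ and $\bz \in \mathbb{C}$, combined with a continuous sup over $\bx \in \mathbb{R}^n$. For fixed $(\bz, y')$, the Lipschitz argument of \cite{NIPS2015} applies: $l_{\bbeta}$ is $\lVert\bbeta_{\text{N}}\rVert_*$-Lipschitz in $\bx$ with respect to $\lVert \cdot \rVert$, so $\sup_{\bx} \{ l_{\bbeta}(\bx,\bz,y') - \lambda \lVert \bx - \bx^i \rVert \}$ equals $+\infty$ whenever $\lambda < \lVert\bbeta_{\text{N}}\rVert_*$, and equals $l_{\bbeta}(\bx^i, \bz, y')$ (attained at $\bx = \bx^i$) otherwise. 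This produces the constraint $\lVert\bbeta_{\text{N}}\rVert_* \leq \lambda$ and reduces the problem to a finite (but exponential in $m$) family of inequalities indexed by $(i, \bz)$ and the two sign patterns corresponding to $y' = y^i$ and $y' = -y^i$, the latter contributing an additive $\lambda\kappa$ from the label-mismatch term in~\eqref{eq:ground_metric:overall}.

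Third, I would introduce epigraph variables $s_i$ imposing, for every $i \in [N]$ and every $\bz \in \mathbb{C}$,
\begin{equation*}
    s_i \geq l_{\bbeta}(\bx^i, \bz, y^i) - \lambda d_{\text{C}}(\bz, \bz^i) \quad \text{and} \quad s_i \geq l_{\bbeta}(\bx^i, \bz, -y^i) - \lambda d_{\text{C}}(\bz, \bz^i) - \lambda \kappa,
\end{equation*}
so that the objective of~\eqref{eq:cone_head_reformulation} becomes $\lambda\epsilon + \tfrac{1}{N}\sum_i s_i$. Finally, I would rewrite each of these soft-plus inequalities in exponential conic form: $\log(1 + e^{t}) \leq r$ is equivalent to $e^{-r} + e^{t-r} \leq 1$, which is represented by the two memberships $(u, 1, -r) \in \mathcal{K}_{\exp}$ and $(v, 1, t-r) \in \mathcal{K}_{\exp}$ together with $u+v \leq 1$. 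Substituting $r = s_i + \lambda d_{\text{C}}(\bz, \bz^i)$ (resp.\ $r = s_i + \lambda d_{\text{C}}(\bz, \bz^i) + \lambda\kappa$) and $t = -y^i (\beta_0 + \bbeta_{\text{N}}{}^\top \bx^i + \bbeta_{\text{C}}{}^\top \bz)$ reproduces verbatim the $(u^\pm, v^\pm)$ constraints in~\eqref{eq:cone_head_reformulation}.

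The main obstacle is the first stage: invoking strong duality on the Wasserstein problem over the non-standard support $\Xi = \mathbb{R}^n \times \mathbb{C} \times \{-1,+1\}$ equipped with the composite metric~\eqref{eq:ground_metric}, which combines an arbitrary rational norm on $\mathbb{R}^n$, a Hamming-type term on $\mathbb{C}$ and a binary penalty on the labels. One has to verify that the Gao--Kleywegt/Blanchet--Murthy duality applies in this mixed setting (lower semi-continuity of $l_{\bbeta}$ and of $d$, properness of the cost, no duality gap) so that the change from an inner sup over distributions to an inner sup over points is justified. Once this is in place, the remaining steps are a mechanical, slice-by-slice repetition of the continuous-feature argument of \cite{NIPS2015}, now performed on each of the $|\mathbb{C}|$ categorical strata.
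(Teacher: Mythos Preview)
Your proposal is correct and follows essentially the same four-stage route as the paper: dualize the inner Wasserstein problem, eliminate the continuous supremum via the Lipschitz bound (the paper's Lemma~\ref{lem:ConvexLemma}), introduce epigraph variables $s_i$, and rewrite the soft-plus constraints in exponential-cone form. The only cosmetic difference is in stage one: where you invoke the Gao--Kleywegt/Blanchet--Murthy duality as a black box on the mixed support, the paper instead writes the primal explicitly as a semi-infinite LP (conditioning $\Pi$ on the atoms $\bxi^i$ and decomposing each $\mathbb{Q}^i$ over the finite $(\bz,y)$-slices) and then applies Shapiro's Proposition~3.4, which sidesteps the regularity check you flag as the ``main obstacle.''
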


\blue{The $s_i$'s and $\lambda$ that appear in (\ref{eq:cone_head_reformulation}) are dual variables that arise from dual of the inner (\textit{i.e.}, $\sup$) problem in (\ref{eq:the_mother_of_all_problems}). For each $\bz \in \mathbb{C}$ and $i \in [N]$, $(u_{i,\bz}^{+}, v_{i,\bz}^{+}, u_{i,\bz}^{-}, v_{i,\bz}^{-})$ is a vector of auxiliary variables that we use to model softplus constraints that arise from our intermediate analysis of problem (\ref{eq:the_mother_of_all_problems}) (Appendix~E).}

Problem~\eqref{eq:cone_head_reformulation} is finite-dimensional and convex. Moreover, since exponential conic programs admit a self-concordant barrier function, they can be solved in polynomial time relative to their input size \cite{N18:convex}. Due to the presence of categorical features in our regression problem, however, problem~\eqref{eq:cone_head_reformulation} comprises exponentially many variables and constraints, and a na\"ive solution of~\eqref{eq:cone_head_reformulation} would therefore require an \emph{exponential} amount of time. The exponential size of problem~\eqref{eq:cone_head_reformulation} renders our formulation fundamentally different from the model of \cite{NIPS2015}, and it significantly complicates the solution.

\subsection{Comparison with Continuous-Feature DR Logistic Regression}\label{sec:comparison_to_continuous}

Our formulation~\eqref{eq:cone_head_reformulation} of the mixed-feature DR logistic regression~\eqref{eq:the_mother_of_all_problems} accounts for categorical features $\bm{z} \in \mathbb{C}$ at the expense of an exponential number of variables and constraints. It is therefore tempting to treat the categorical features as continuous ones and directly apply the continuous-feature-only reformulation of the DR logistic regression~\eqref{eq:the_mother_of_all_problems} proposed by \cite{NIPS2015}. In the following, we argue that such a reformulation would hedge against nonsensical worst-case distributions that in turn lead to overly conservative regression models. To see this, consider a stylized setting where the data set $(z^i, y^i)_{i \in [N]}$ comprises a single binary feature $z^i \in \{ -1, +1 \}$ that impacts the label $y^i \in \{ -1, +1 \}$ via the logistic model
\begin{equation*}
    \mbox{Prob} (y \mid z) = \left[ 1 + \exp (-y \cdot \beta z) \right]^{-1}
\end{equation*}
with $\beta = 1$. \blue{(While we use $z^i \in \{0,1\}$ in the other sections of the paper, the analysis and results of this subsection and Appendix B assumed $z^i \in \{ -1, +1 \}$ as this made comparisons with \cite{NIPS2015} easier.)} We attempt to recover this logistic model in two ways:
\begin{enumerate}
    \item \emph{Mixed-feature model.} We employ our DR logistic regression~\eqref{eq:the_mother_of_all_problems} with a single categorical feature, using $p = 1$ and $\kappa = 1$ in the ground metric (\emph{cf.}~Definition~\ref{def:metric}).
    \item \emph{Continuous-feature model.} We employ a variant of problem~\eqref{eq:the_mother_of_all_problems} that treats the categorical feature $z_i$ as a continuous one. We choose as ground metric
    \begin{equation*}
        d (\bm{\xi}, \bm{\xi}') := \dfrac{1}{2}| z - z' | + \indicate{y \neq y'}.
    \end{equation*}
\end{enumerate}
We therefore use the feature distances $\indicate{z \neq z'}$ in the mixed-feature case and $\dfrac{1}{2} | z - z' |$ in the continuous-feature case. Note they both attain the value $0$ ($1$) for equal (different) feature values.
\begin{figure}[tb]
    \centering
    \includegraphics[width=.48\textwidth]{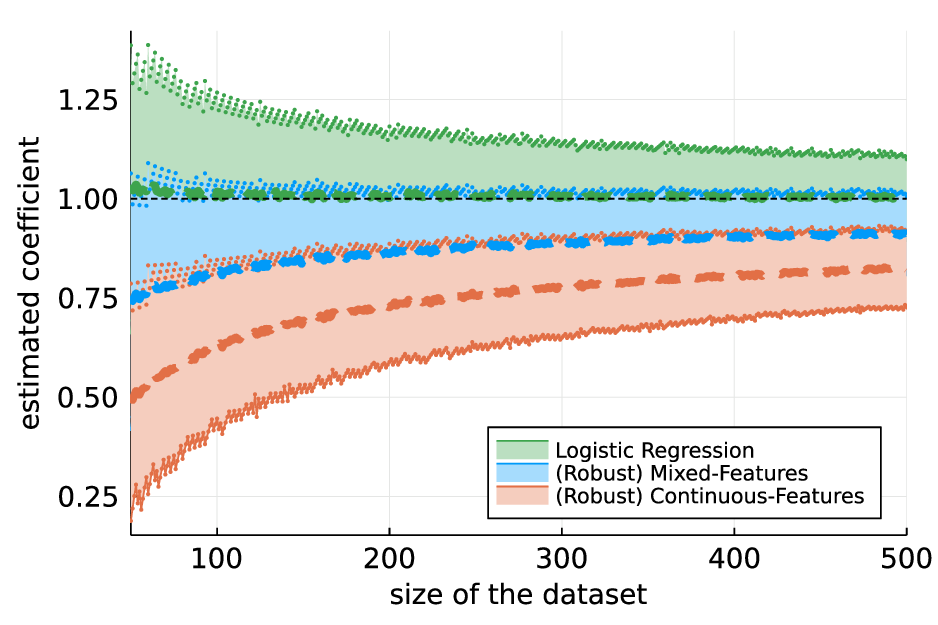} \hfill  \includegraphics[width=.48\textwidth]{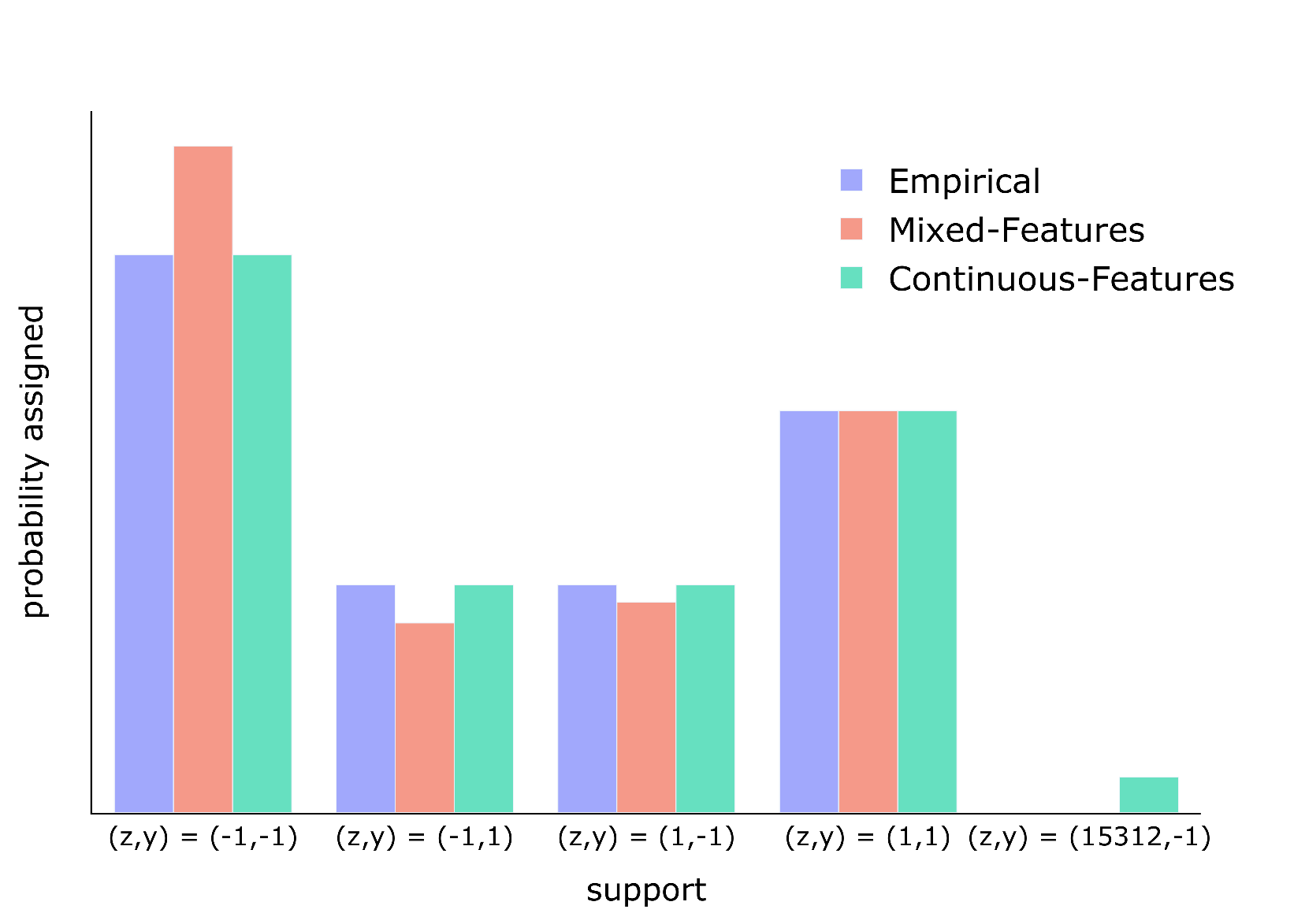}
    \caption{\emph{Left:} Estimates of $\beta$ for the standard logistic regression, our mixed-feature as well as the continuous-feature model as a function of the size $N$ of the data set. All results are reported as averages over 2,000 statistically independent runs. \emph{Right:} Comparison of the empirical distribution as well as the worst-case distributions for $\beta = 1$ under the two DR models for $N = 250$ samples. The probabilities are plotted on a log-scale to make small values visible.}
    \label{fig:were_simply_the_best}
\end{figure}

Figure~\ref{fig:were_simply_the_best}~(left) compares the mean values (dashed lines) as well as the 15\% and 85\% quantiles (shaded regions) of $\beta$ for the standard logistic regression, our mixed-feature as well as the continuous-feature model as a function of the size $N$ of the data set. For both DR models, we employed the same Wasserstein radius $\epsilon \propto 1 / \sqrt{N}$, which is motivated by the finite sample guarantee presented in the next section (\emph{cf.}~Theorem~\ref{thm:finite_sample_guarantee}). The figure shows that the continuous-feature model excessively shrinks its estimates of $\beta$, thus leading to overly conservative results. In fact, the true parameter value $\beta = 1$ consistently lies outside the confidence region of the continuous-feature model, independent of the sample size $N$. This is due to the fact that the continuous-feature model accounts for nonsensical worst-case distributions under which the categorical feature can take values outside its domain $\{ -1, +1 \}$, see Figure~\ref{fig:were_simply_the_best}~(right). In fact, the continuous-feature model places non-zero probability on an extreme scenario under which the binary feature $z \in \{-1, +1\}$ attains the value $15,312$. \blue{(In Appendix B we explain why this value of $15,312$ arises.)} Our mixed-feature model, on the other hand, restricts the worst-case distribution to the domain of the categorical feature and thus hedges against realistic distributions only.

We note that in practice, the radius $\epsilon$ of the Wasserstein ball will be chosen via cross-validation (\emph{cf.}~Section~\ref{sec:numericals}), in which case our mixed-feature model reliably outperforms the classical logistic regression on standard benchmark instances. While one may argue that the impact of nonsensical worst-case distributions in the continuous-feature model is alleviated by the cross-validated radii, Figure~\ref{fig:were_simply_the_best} offers strong theoretical and practical reasons against the use of a continuous-feature model for categorical or mixed-feature problems: The continuous-feature model hedges against a clumsily perturbed worst-case distribution that introduces a pronounced bias in the estimation. 
\blue{We also note that another possible approach would be to use the continuous-feature model but to restrict the support of binary features to $[-1,1]$. Unfortunately, for the log-loss function no tractable reformulation with support constraints appears to be known; see \cite{JMLR:v20:17-633} .}
Moreover, Section~\ref{sec:numericals} will show that the mixed-feature model, despite its exponential size, can be solved quickly and reliably with our novel column-and-constraint approach from Section~\ref{sec:cac_generation}.

\subsection{Complexity Analysis}\label{sec:mixed_feature_lr:complexity}

We first show that despite its exponential size, the DR logistic regression problem~\eqref{eq:the_mother_of_all_problems} admits a polynomial-time solution. Key to this perhaps surprising finding is the shape of the loss function: while problem~\eqref{eq:the_mother_of_all_problems} is strongly NP-hard (and thus unlikely to admit a polynomial-time solution scheme) for generic loss functions, it can be solved in polynomial time for the log-loss function $l_{\bm{\beta}}$ employed in logistic regression.

\begin{thm}[Complexity of the DR Logistic Regression~\eqref{eq:the_mother_of_all_problems}]\label{thm:complexity}
    ~
    \begin{enumerate}
        \item[\emph{(i)}] For generic loss functions $l_{\bm{\beta}}$, problem~\eqref{eq:the_mother_of_all_problems} is strongly NP-hard even if $n = 0$ and $N = 1$.
        \item[\emph{(ii)}] For the loss function $l_{\bm{\beta}} (\bm{x}, \bm{z}, y) = \log \left( 1 + \exp \left[ -y \cdot \left( \beta_0 + \bm{\beta}_{\text{\emph{N}}}{}^\top \bm{x} + \bm{\beta}_{\text{\emph{C}}}{}^\top \bm{z} \right) \right] \right)$ and the ground metric of Definition~\ref{def:metric}, problem~\eqref{eq:the_mother_of_all_problems} can be solved to $\delta$-accuracy in polynomial time.
    \end{enumerate}
\end{thm}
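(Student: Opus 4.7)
The plan is to establish the two parts separately. Part~(i) is proved by a complexity-theoretic reduction that embeds a known NP-hard combinatorial problem into $l_\bbeta$. Part~(ii) exploits the monotonicity of the log-loss in its linear argument to collapse the exponential-cardinality supremum over $\bz \in \mathbb{C}$ into a polynomial-time greedy computation, which then enables the ellipsoid method to solve a polynomial-size convex reformulation.

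For part~(i), I would fix $n = 0$, $N = 1$, and reduce from strongly NP-hard 3-SAT. Given a 3-SAT formula $\phi$ on $m$ binary variables, set $\bz^1 = \bm{0}$, $y^1 = +1$, let each categorical feature be binary (i.e., $k_j = 2$), and choose the generic loss $l_\bbeta(\bz, y) := \indicate{\bz \text{ satisfies } \phi}$, which is independent of $\bbeta$ (so the outer minimization drops out and the problem's complexity coincides with that of evaluating the inner supremum). Taking $\epsilon \geq m^{1/p}$ in the ground metric of Definition~\ref{def:metric} makes every $\bz \in \mathbb{C}$ reachable by the adversary at finite transportation cost from $\bz^1$, so $\sup_{\mathbb{Q} \in \mathfrak{B}_\epsilon(\widehat{\mathbb{P}}_1)} \mathbb{E}_\mathbb{Q}[l_\bbeta] = 1$ if and only if $\phi$ is satisfiable; the strong NP-hardness of 3-SAT then carries over to~\eqref{eq:the_mother_of_all_problems}.

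For part~(ii), I would invoke Wasserstein strong duality (e.g.,~\cite{Gao_Kleywegt_2016, med17}) to recast~\eqref{eq:the_mother_of_all_problems} as
\begin{equation*}
    \min_{\bbeta,\, \lambda \geq 0,\, \bm{s}} \left\{ \lambda \epsilon + \tfrac{1}{N} \sum_{i \in [N]} s_i \;\; : \;\; s_i \geq \sup_{\bm{\xi} \in \Xi} \bigl[ l_\bbeta(\bm{\xi}) - \lambda \, d(\bm{\xi}, \bm{\xi}^i) \bigr] \;\; \forall i \in [N] \right\},
\end{equation*}
a convex program with polynomially many variables and semi-infinite convex constraints. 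The crux is showing that the right-hand supremum admits a polynomial-time evaluation together with a maximizer $\bm{\xi}^*$. To this end, I would enumerate $y \in \{-1,+1\}$ (contributing an additive $-\lambda \kappa \indicate{y \neq y^i}$) and observe that $l_\bbeta$ is monotonically increasing in $u := -y(\beta_0 + \bbeta_\text{N}^\top \bx + \bbeta_\text{C}^\top \bz)$. The sup over $\bx$ for fixed $(y, \bz)$ then admits the closed-form treatment of \cite{NIPS2015}, is finite precisely when $\|\bbeta_\text{N}\|_* \leq \lambda$, and is attained at $\bx = \bx^i$ under this condition. The remaining sup over $\bz$ decomposes by Hamming distance $h \in \{0, 1, \ldots, m\}$ from $\bz^i$: the per-block ``flip gains'' $g_j = \max_{\bz_j' \in \mathbb{C}(k_j),\, \bz_j' \neq \bz_j^i} \bigl\{ -y \bbeta_{\text{C},j}^\top (\bz_j' - \bz_j^i) \bigr\}$ are independent across $j$, and the optimal $\bz$ at distance exactly $h$ is obtained greedily by flipping the $h$ blocks with the largest $g_j$, costing only $O(m \log m)$ per $(i, y)$.

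With this polynomial-time evaluation oracle in hand, the separation problem for the semi-infinite constraints reduces to computing the above sup and, whenever it exceeds $s_i$, returning the violating $\bm{\xi}^*$---which yields a valid separating subgradient hyperplane by convexity of $s_i \geq l_\bbeta(\bm{\xi}^*) - \lambda d(\bm{\xi}^*, \bm{\xi}^i)$ in $(\bbeta, \lambda, s_i)$. The ellipsoid method, initialized on a box of bit-length polynomial in the input (via a~priori norm bounds on the optimal $(\bbeta, \lambda)$) and run to tolerance $\delta$, then yields a $\delta$-accurate solution in polynomially many iterations. The main obstacle is justifying the ``top-$h$ flip'' decomposition: this hinges on $\lambda d_\text{C}(\bz, \bz^i)$ depending on $\bz$ only through the number of discrepancies, guaranteed by~\eqref{eq:ground_metric:categorical}, together with the block-additivity of $\bbeta_\text{C}^\top \bz$, which together decouple the per-block maximizations for each fixed $h$. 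A secondary technicality is verifying polynomial input-size bounds on the ellipsoid's initial box and precision parameters, which is standard but requires care to handle the exponential cone's non-polyhedrality.
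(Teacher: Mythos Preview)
Your proposal is correct and takes essentially the same approach as the paper: part~(ii) is identical in structure (Wasserstein strong duality, the closed-form supremum over $\bx$ via the Lipschitz property of the log-loss, the Hamming-distance decomposition with greedy per-block flips---which is precisely the paper's Algorithm~\ref{alg:most_violated}---and the ellipsoid method driven by the resulting separation oracle, which the paper obtains by citing the Gr\"otschel--Lov\'asz--Schrijver framework). The only cosmetic difference is in part~(i), where the paper reduces from \textsc{0/1 Integer Programming} rather than \textsc{3-SAT}, but the mechanism---a $\bbeta$-independent loss encoding the combinatorial instance, combined with $\epsilon$ large enough that the inner supremum ranges unconstrained over all of $\mathbb{C}$---is the same.
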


Recall that an optimization problem is solved to $\delta$-accuracy if a $\delta$-suboptimal solution is identified that satisfies all constraints modulo a violation of at most $\delta$. The consideration of $\delta$-accurate solutions is standard in the numerical solution of nonlinear programs where an optimal solution may be irrational.

A by now well-known result shows that when $m = 0$ (no categorical features), the DR logistic regression problem~\eqref{eq:the_mother_of_all_problems} reduces to a classical logistic regression with an additional regularization term $\lVert \bm{\beta}_{\bm{x}} \rVert_*$ in the objective function when the output label weight $\kappa$ in Definition~\ref{def:metric} approaches $\infty$ \cite{NIPS2015,JMLR:v20:17-633}. We next show that this reduction to a classical regularized logistic regression no longer holds in our problem setting when categorical features are present.

\begin{thm}[Absence of a Reformulation as a Regularized Problem]\label{thm:it_aint_regularized}
    Even when the output label weight $\kappa$ approaches $\infty$ in the DR logistic regression~\eqref{eq:the_mother_of_all_problems}, problem~\eqref{eq:the_mother_of_all_problems} does not admit an equivalent reformulation
    \begin{equation*}
        \begin{array}{l@{\quad}l}
            \displaystyle \mathop{\text{\emph{minimize}}}_{\bm{\beta}} & \displaystyle \mathbb{E}_{\color{black} \widehat{\mathbb{P}}_N} \left[ l_{\bm{\beta}} (\bm{x}, \bm{z}, y) \right] + \mathfrak{R} (\bm{\beta}) \\
            \displaystyle \text{\emph{subject to}} & \displaystyle \bm{\beta} = (\beta_0, \bm{\beta}_{\text{\emph{N}}}, \bm{\beta}_{\text{\emph{C}}}) \in \mathbb{R}^{1 + n + k},
        \end{array}
    \end{equation*}
    as a classical regularized logistic regression \emph{for any regularizer} $\mathfrak{R} : \mathbb{R}^{1 + n + k} \rightarrow \mathbb{R}$. 
\end{thm}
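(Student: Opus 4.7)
The plan is to argue by contradiction. Assume that in the $\kappa \to \infty$ limit, problem~\eqref{eq:the_mother_of_all_problems} is pointwise equivalent to $\mathbb{E}_{\widehat{\mathbb{P}}_N}[l_{\bm{\beta}}] + \mathfrak{R}(\bm{\beta})$ for some universal regularizer $\mathfrak{R}: \mathbb{R}^{n + k + 1} \to \mathbb{R}$ on \emph{every} training set, where the two objectives are allowed to differ by a constant in $\bm{\beta}$ that may depend on the data. Writing $\mathrm{DRO}(\bm{\beta})$ and $\mathrm{ERM}(\bm{\beta})$ for the robust and empirical objectives, it then follows that for any two data sets $A$ and $B$ the quantity $(\mathrm{DRO}_A - \mathrm{ERM}_A) - (\mathrm{DRO}_B - \mathrm{ERM}_B)$ must be constant in $\bm{\beta}$. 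I will exhibit an $(A, B)$ pair for which this quantity depends non-trivially on $\bm{\beta}$.

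I work with the minimal instance $n = 0$, $m = 1$, $k_1 = 2$ (so $z \in \mathbb{C}(2) = \{0, 1\}$), $N = 1$, $\kappa \to \infty$, and any Wasserstein radius $\epsilon > 0$. In the $\kappa \to \infty$ limit the adversary cannot transport mass across labels, so every feasible $\mathbb{Q}$ is supported on $\{0, 1\} \times \{y^1\}$, and moving mass $\mu$ from $z^1$ to $1 - z^1$ costs $\mu \cdot d_{\text{C}}(0, 1) = \mu$. Under a transport budget of $\epsilon$ the adversary thus shifts at most $\alpha := \min\{\epsilon, 1\}$ units of mass, and the worst case is attained by shifting all admissible mass to whichever value incurs the larger log-loss. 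Writing $l_z := \log(1 + \exp(-\beta_0 - z \beta_{\text{C}}))$ for $z \in \{0,1\}$, for \emph{Data A} with $(z^1, y^1) = (0, +1)$ and \emph{Data B} with $(z^1, y^1) = (1, +1)$,
\[
    \mathrm{DRO}_A(\bm{\beta}) = l_0 + \alpha \max\{0,\, l_1 - l_0\},
    \qquad
    \mathrm{DRO}_B(\bm{\beta}) = l_1 + \alpha \max\{0,\, l_0 - l_1\},
\]
with empirical risks $\mathrm{ERM}_A = l_0$ and $\mathrm{ERM}_B = l_1$.

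Subtracting, and using the identity $\max\{0, x\} - \max\{0, -x\} = x$,
\[
    (\mathrm{DRO}_A - \mathrm{ERM}_A) - (\mathrm{DRO}_B - \mathrm{ERM}_B) = \alpha \bigl[\max\{0,\, l_1 - l_0\} - \max\{0,\, l_0 - l_1\}\bigr] = \alpha (l_1 - l_0),
\]
which is manifestly non-constant in $\bm{\beta}$: at $(\beta_0, \beta_{\text{C}}) = (0, 0)$ it equals $0$, whereas at $(\beta_0, \beta_{\text{C}}) = (0, 1)$ it equals $\alpha[\log(1 + e^{-1}) - \log 2] < 0$. This contradicts the required $\bm{\beta}$-constancy and thereby rules out any universal $\mathfrak{R}$. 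The only genuine computation is evaluating the two worst-case transports in closed form, which is painless because $N = 1$ and $|\mathbb{C}| = 2$ collapse the adversary's problem to a single one-dimensional linear program; the conceptual heart of the argument is the observation that flipping $z^1$ reflects the DRO-excess about the diagonal $l_0 = l_1$, and reflection is never an additive shift in $\bm{\beta}$, so the mismatch cannot be absorbed into any regularizer depending on $\bm{\beta}$ alone.
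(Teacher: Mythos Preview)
Your proof is correct and takes a genuinely different route from the paper's.

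The paper introduces a \emph{numerical} feature: it fixes $\bm{\beta} = (0,1,2)$ with $n = m = 1$, parameterizes the single training point by $x_1^1 \in \mathbb{R}$, and (using $\epsilon > 1$ so that the constraint $\lVert \bm{\beta}_{\text{N}} \rVert_* \leq \lambda$ forces $\lambda^\star = 1$) derives the DRO objective in closed form as $\epsilon + \max\{\,l_{\bm{\beta}}(x_1^1,0,-1),\; l_{\bm{\beta}}(x_1^1,1,-1)-1\,\}$. This is a non-smooth function of $x_1^1$ (the max has a kink at $x_1^1 = -1$), whereas the regularized objective $l_{\bm{\beta}}(x_1^1,0,-1) + \mathfrak{R}(\bm{\beta})$ is smooth in $x_1^1$ for any fixed $\mathfrak{R}(\bm{\beta})$; the contradiction is obtained by comparing one-sided derivatives.

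Your argument dispenses with the numerical feature entirely ($n=0$), keeps $\epsilon$ arbitrary, and instead compares two purely categorical data sets related by the reflection $z^1 \mapsto 1-z^1$. You then vary $\bm{\beta}$ rather than the data and exploit the algebraic identity $\max\{0,x\}-\max\{0,-x\}=x$ to obtain the discrepancy $\alpha(l_1-l_0)$ directly. This is more elementary (no smoothness/derivative step, no need to pin down $\lambda^\star$ via a norm constraint) and in fact slightly stronger, since you explicitly rule out equivalence even up to a data-dependent additive constant. The paper's approach, on the other hand, makes visible the geometric source of the obstruction---the kink created by the inner maximum over $\bm{z}\in\mathbb{C}$---which your reflection argument encodes more implicitly.
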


 Note that Theorem~\ref{thm:it_aint_regularized} does not only preclude the existence of a specific regularizer, but it excludes the existence of \emph{any} regularizer, no matter how complex its dependence on $\bm{\beta}$ might be. We are not aware of any prior results of this form in the literature. \blue{Some insight for the result in Theorem~\ref{thm:it_aint_regularized} may be found via the special case we consider in its proof in Appendix E and in Remark \ref{rem:NotReg} that follows it.}

\section{Column-and-Constraint Solution Scheme}\label{sec:cac_generation}

Our DR logistic regression problem~\eqref{eq:cone_head_reformulation} comprises exponentially many variables and constraints, which renders its solution as a monolithic exponential conic program challenging. Instead, Algorithm~\ref{alg:cac_gen} employs a column-and-constraint generation scheme, \blue{\textit{e.g.}, \cite{ZENG2013457}}, that alternates between \emph{(i)} the solution of relaxations of problem~\eqref{eq:cone_head_reformulation} that omit most of its variables and constraints and \emph{(ii)} adding those variables and constraints that promise to maximally tighten the relaxations.

\begin{algorithm}[tb]
    \begin{algorithmic}
        \caption{Column-and-Constraint Generation Scheme for Problem~\eqref{eq:cone_head_reformulation}.} \label{alg:cac_gen}
        \State Set $\text{LB}_0 = -\infty$ and $\text{UB}_0 = +\infty$
        \State Choose (possibly empty) subsets $\mathcal{W}^+, \mathcal{W}^- \subseteq [N] \times \mathbb{C}$
        \While{$\text{LB}_t \neq \text{UB}_t$}
            \State Let $\theta^\star$ be the optimal value and $(\bbeta, \lambda, \bm{s}, \bm{u}^\pm, \bm{v}^\pm)$ be a minimizer of problem~\eqref{eq:cone_head_reformulation} where:
             \begin{itemize}\setlength{\itemindent}{2em}
                \item the first constraint set is only enforced for all $(i, \bm{z}) \in \mathcal{W}^+$;
                \item the second constraint set is only enforced for all $(i, \bm{z}) \in \mathcal{W}^-$;
                \item we only include those $(u^+_{i,\bm{z}}, v^+_{i,\bm{z}})$ for which $(i, \bm{z}) \in \mathcal{W}^+$;
                \item we only include those $(u^-_{i,\bm{z}}, v^-_{i,\bm{z}})$ for which $(i, \bm{z}) \in \mathcal{W}^-$.
            \end{itemize}
            \State Identify a most violated index $(i^+, \bm{z}^+)$ of the first constraint set in~\eqref{eq:cone_head_reformulation} and add it to $\mathcal{W}^+$
            \State Identify a most violated index $(i^-, \bm{z}^-)$ of the second constraint set in~\eqref{eq:cone_head_reformulation} and add it to $\mathcal{W}^-$
            \State Let $\vartheta^+$ and $\vartheta^-$ be the violations of $(i^+, \bm{z}^+)$ and $(i^-, \bm{z}^-)$, respectively
            \State Update $\text{LB}_t = \theta^\star$ and $\text{UB}_t = \min \{ \text{UB}_{t-1}, \; \theta^\star + \log (1 + \max \{ \vartheta^+, \, \vartheta^- \}) \}$
            \State Update $t = t + 1$
        \EndWhile
    \end{algorithmic}
\end{algorithm}

\begin{thm}\label{thm:cac_scheme}
    Algorithm~\ref{alg:cac_gen} solves problem~\eqref{eq:cone_head_reformulation} in finitely many iterations. Moreover, $\textrm{\emph{LB}}_t$ and $\textrm{\emph{UB}}_t$ constitute monotone sequences of lower and upper bounds on the optimal value of problem~\eqref{eq:cone_head_reformulation}.
\end{thm}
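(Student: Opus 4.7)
The plan is to establish four properties in turn: (i) every $\textrm{LB}_t$ is a valid lower bound on the optimum of~\eqref{eq:cone_head_reformulation}; (ii) the sequence $\{\textrm{LB}_t\}$ is monotonically non-decreasing; (iii) every $\textrm{UB}_t$ is a valid upper bound; and (iv) the algorithm stops after finitely many iterations. Properties (i) and (ii) are routine: the relaxation at iteration $t$ is obtained from~\eqref{eq:cone_head_reformulation} by removing constraints together with the $(u^\pm_{i,\bz}, v^\pm_{i,\bz})$ variables that appear only inside those constraints, so projecting any feasible point of~\eqref{eq:cone_head_reformulation} onto the relaxation's variables yields a feasible point of equal objective value. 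Hence $\theta^\star$ is at most the optimal value of~\eqref{eq:cone_head_reformulation}, and because $\mathcal{W}^+$ and $\mathcal{W}^-$ only grow across iterations, the relaxation tightens monotonically, giving $\textrm{LB}_{t+1} \geq \textrm{LB}_t$. Monotonicity of $\{\textrm{UB}_t\}$ is built into the min-based update rule.

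The main work lies in property (iii), namely in showing that $\theta^\star + \log(1 + \max\{\vartheta^+,\vartheta^-\})$ is achievable by a feasible solution of~\eqref{eq:cone_head_reformulation}. The crucial observation is that once the slack variables $u^\pm_{i,\bz}$ and $v^\pm_{i,\bz}$ are eliminated by fixing them to their tight exponential-cone values, each omitted ``plus'' block of~\eqref{eq:cone_head_reformulation} reduces to a single inequality of the form
\begin{equation*}
    \exp(-s_i - \lambda d_{\text{C}}(\bz,\bz^i)) + \exp(-s_i - \lambda d_{\text{C}}(\bz,\bz^i) - y^i[\beta_0 + \bbeta_{\text{N}}^\top \bx^i + \bbeta_{\text{C}}^\top \bz]) \leq 1,
\end{equation*}
with an analogous inequality---shifted by $\lambda \kappa$---arising from the ``minus'' block. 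Writing $\vartheta^+_{i,\bz}$ for the excess of the left-hand side over $1$ at the current relaxation minimizer, substituting $s_i \mapsto s_i + \delta$ rescales the left-hand side by $e^{-\delta}$, so the inequality holds whenever $\delta \geq \log(1 + \vartheta^+_{i,\bz})$. Since $\vartheta^+$ is by construction the largest such excess across all $(i,\bz) \notin \mathcal{W}^+$ (and analogously for $\vartheta^-$), uniformly lifting $s_i \mapsto s_i + \log(1 + \max\{\vartheta^+,\vartheta^-\})$ for every $i \in [N]$ and completing $u^\pm_{i,\bz}, v^\pm_{i,\bz}$ at their tight values produces a point feasible for~\eqref{eq:cone_head_reformulation}. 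Constraints already present in the relaxation remain satisfied because raising $s_i$ only shrinks the third coordinate of every exponential-cone triple, and the norm constraint $\|\bbeta_{\text{N}}\|_* \leq \lambda$ is unaffected. Since the objective is $\lambda \epsilon + N^{-1} \sum_i s_i$, the uniform shift raises it by exactly $\log(1 + \max\{\vartheta^+,\vartheta^-\})$, yielding the claimed upper bound.

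For property (iv), note that $[N] \times \mathbb{C}$ is a finite set of size $N \prod_{j \in [m]} k_j$, so $\mathcal{W}^+$ and $\mathcal{W}^-$ can be strictly enlarged only finitely often. Whenever no further enlargement is possible---or sooner, if the most violated indices at some iteration satisfy $\vartheta^+,\vartheta^- \leq 0$---the relaxation minimizer is already feasible for~\eqref{eq:cone_head_reformulation}, so $\textrm{UB}_t \leq \theta^\star = \textrm{LB}_t$ and the stopping criterion fires. The main obstacle is the constructive upper bound in step (iii); its correctness relies on the special exponential-in-$(-s_i)$ structure of the cone formulation, which allows a single uniform additive shift of the $\bm{s}$ variables to absorb the multiplicative overshoot in all omitted conic inequalities simultaneously, and would not go through for generic loss encodings.
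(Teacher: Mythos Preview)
Your proof is correct and follows essentially the same route as the paper's own argument: lower bounds via relaxation, upper bounds via the additive shift $s_i \mapsto s_i + \log(1+\max\{\vartheta^+,\vartheta^-\})$ that rescales the exponential left-hand sides by the appropriate factor, and finite termination by finiteness of $[N]\times\mathbb{C}$. If anything, you are slightly more explicit than the paper in two respects: you spell out that constraints already in $\mathcal{W}^\pm$ are preserved under the shift (since increasing $s_i$ only decreases the third cone coordinate), and you note that the norm constraint $\|\bm{\beta}_{\text{N}}\|_* \leq \lambda$ is untouched; the paper leaves these points implicit.
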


A key step in Algorithm~\ref{alg:cac_gen} is the identification of most violated indices $(i, \bm{z}) \in [N] \times \mathbb{C}$ of the first and second constraint set in the reduced DR regression problem~\eqref{eq:cone_head_reformulation} for a fixed solution $(\bbeta, \lambda, \bm{s}, \bm{u}^\pm, \bm{v}^\pm)$. For the first constraint set in~\eqref{eq:cone_head_reformulation}, the identification of such indices requires for each data point $i \in [N]$ the solution of the combinatorial problem
\begin{equation} \label{eq:CombOp1}
    \mspace{-25mu}
    \begin{array}{l@{\quad}l@{\quad}l}
        \displaystyle \mathop{\text{maximize}}_{\bm{z}} & \displaystyle \min_{u^{+}, v^{+}} \left\{ u^{+} +  v^{+} \, : \, \left[ \begin{array}{l}
            \displaystyle (u^{+}, 1, -s_i - \lambda d_\text{C} (\bz,\bz^i)) \in \mathcal{K}_{\exp}  \\
            \displaystyle (v^{+}, 1, -y^i\bbeta_{\text{N}}{}^\top \bx^i -y^i\bbeta_{\text{C}}{}^\top \bz - y^i \beta_0 -s_i - \lambda d_\text{C} (\bz,\bz^i)) \in \mathcal{K}_{\exp}
        \end{array}
        \right] \right\} \\[4.5mm]
        \displaystyle \text{subject to} & \displaystyle \bm{z} \in \mathbb{C},
    \end{array}  
\end{equation}
where an optimal value greater than $1$ corresponds to a violated constraint; an analogous problem can be defined for the second constraint set in~\eqref{eq:cone_head_reformulation}. Despite its combinatorial nature, the above problem can be solved efficiently by means of Algorithm~\ref{alg:most_violated}.

\begin{algorithm}[tb]
    \begin{algorithmic}
        \caption{Identification of Most Violated Constraints in the Reduced Problem~\eqref{eq:cone_head_reformulation}.} \label{alg:most_violated}
        \For{$j \in \{ 1, \ldots, m \}$}
            \State Find a feature value $\bm{z}_j^\star$ that minimizes $y^i \cdot \bm{\beta}_{\text{C},j}{}^\top \bm{z}_j$ across all $\bm{z}_j \in \mathbb{C} (k_j) \setminus \{ \bm{z}^i_j \}$
        \EndFor
        \State Let $\pi : [m] \rightarrow [m]$ be an ordering such that
        \begin{equation*}
            y^i \cdot \bm{\beta}_{\text{C}, \pi(j)}{}^\top (\bm{z}^\star_{\pi (j)} - \bm{z}^i_{\pi (j)}) \leq y^i \cdot \bm{\beta}_{\text{C}, \pi(j')}{}^\top (\bm{z}^\star_{\pi (j')} - \bm{z}^i_{\pi (j')}) \qquad \forall 1 \leq j \leq j' \leq m
        \end{equation*}
        \State Set $\mathcal{W} = \emptyset$
        \For{$\delta \in \{ 0, 1, \ldots, m \}$}
            \State Update $\mathcal{W} = \mathcal{W} \cup \{ \bm{z} \}$, where $\bm{z}_j = \bm{z}^\star_j$ if $\pi (j) \leq \delta$ and $\bm{z}_j = \bm{z}^i_j$ otherwise
        \EndFor
        \State Determine the most violated constraint from the candidate set $\mathcal{W}$
    \end{algorithmic}
\end{algorithm}


\begin{thm}\label{thm:algo_complex}
    Algorithm~\ref{alg:most_violated} identifies a most violated constraint for a given data-point $i \in [N]$ in time $\mathcal{O} (k + n + m^2)$.
\end{thm}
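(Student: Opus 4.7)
The plan is to establish two claims: (a) Algorithm~\ref{alg:most_violated} returns a maximizer $\bm{z} \in \mathbb{C}$ of the displayed combinatorial optimization problem, and (b) it does so in time $\mathcal{O}(k + n + m^2)$.

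I would first dispose of the inner minimization in closed form. Because the exponential-cone constraints $(u^+, 1, c) \in \mathcal{K}_{\exp}$ and $(v^+, 1, c') \in \mathcal{K}_{\exp}$ read $u^+ \geq \exp(c)$ and $v^+ \geq \exp(c')$ respectively, and since $u^+ + v^+$ is minimized, both are attained at equality. Substituting and factoring out $\exp(-s_i - \lambda d_\text{C}(\bm{z}, \bm{z}^i))$, the outer maximization over $\bm{z}$ becomes
\[
    \max_{\bm{z} \in \mathbb{C}} \; \exp\bigl(-s_i - \lambda d_\text{C}(\bm{z}, \bm{z}^i)\bigr) \cdot \bigl(1 + C_i \exp(-y^i \bbeta_{\text{C}}{}^\top \bm{z})\bigr),
\]
with the data-dependent constant $C_i = \exp(-y^i (\bbeta_{\text{N}}{}^\top \bm{x}^i + \beta_0)) > 0$. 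For correctness I stratify the feasible set by the Hamming distance $\delta(\bm{z}) = \sum_{j \in [m]} \indicate{\bm{z}_j \neq \bm{z}^i_j}$, so that $d_\text{C}(\bm{z}, \bm{z}^i) = \delta(\bm{z})^{1/p}$ depends only on \emph{which} coordinates are changed. Holding $\delta$ fixed, the first factor is constant and one is left with minimizing the additively separable expression $y^i \bbeta_{\text{C}}{}^\top \bm{z} = \sum_{j \in [m]} y^i \bbeta_{\text{C}, j}{}^\top \bm{z}_j$ subject to exactly $\delta$ coordinates of $\bm{z}$ differing from $\bm{z}^i$. Separability implies that for every flipped coordinate $j$ the optimal replacement is the minimizer $\bm{z}^\star_j$ computed by the first loop of the algorithm; a standard exchange argument then shows that the optimal subset of $\delta$ coordinates is the prefix of length $\delta$ in the ordering $\pi$, which sorts by the per-coordinate improvement $y^i \bbeta_{\text{C}, j}{}^\top (\bm{z}^\star_j - \bm{z}^i_j)$. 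Since Algorithm~\ref{alg:most_violated} enumerates all $\delta \in \{0, 1, \ldots, m\}$ and returns the best resulting candidate, global optimality follows.

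For the complexity bound, I would account for each phase of the algorithm in turn: the first loop examines $|\mathbb{C}(k_j)| = k_j$ one-hot or zero vectors at position $j$ with $\mathcal{O}(1)$ inner-product evaluations each, summing to $\sum_{j \in [m]} k_j = k + m$; computing the $m$ sort keys costs $\mathcal{O}(m)$ and sorting them $\mathcal{O}(m \log m)$; constructing the $m + 1$ candidate vectors in $\mathcal{W}$ writes at most $m(m+1)$ coordinates; and the final sweep precomputes $\bbeta_{\text{N}}{}^\top \bm{x}^i$ once in $\mathcal{O}(n)$ and then evaluates the closed-form objective on each candidate using either incremental or naive $\mathcal{O}(m)$-per-candidate updates of $\bbeta_{\text{C}}{}^\top \bm{z}$ and $d_\text{C}$. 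Aggregating the contributions gives the advertised $\mathcal{O}(k + n + m^2)$ bound.

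The main obstacle, and the only place where the argument is nontrivial, is the separation-across-coordinates step. One must verify that the joint choice of \emph{which} coordinates to flip and \emph{what} to flip them to decouples into (i) per-coordinate replacement via $\bm{z}^\star_j$ and (ii) a subset-selection problem solved greedily via the ordering $\pi$. This is valid precisely because $\bbeta_{\text{C}}{}^\top \bm{z}$ is linear-additive in the one-hot blocks $\bm{z}_j$ and because $d_\text{C}$ in Definition~\ref{def:metric} depends on the Hamming distance alone -- a feature that would fail, and invalidate the algorithm, under more general categorical metrics.
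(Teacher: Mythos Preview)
Your proposal is correct and follows essentially the same approach as the paper: both resolve the inner minimization in closed form via the exponential-cone definition, stratify the feasible set by the Hamming distance $\delta \in \{0,\ldots,m\}$, and then exploit the additive separability of $\bbeta_{\text{C}}{}^\top \bm{z}$ to show that the per-level subproblem is solved greedily by the ordering $\pi$. Your complexity accounting is slightly more explicit than the paper's (you itemize the $\mathcal{O}(k+m)$ cost of the first loop and the $\mathcal{O}(n)$ precomputation of $\bbeta_{\text{N}}{}^\top \bm{x}^i$), but the argument and the resulting bound are the same.
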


Algorithm~\ref{alg:most_violated} first determines for each categorical feature $\bm{z}_j$, $j = 1, \ldots, m$, the feature value $\bm{z}^\star_j \in \mathbb{C} (k_j) \setminus \{ \bm{z}^i_j \}$ that, \emph{ceteris paribus}, contributes to a maximal constraint violation in time $\mathcal{O} (k)$. The subsequent step sorts the different features $j = 1, \ldots, m$ in descending order of their contribution to constraint violations in time $\mathcal{O} (m \log m)$. The last step of Algorithm~\ref{alg:most_violated}, finally, constructs the most violated constraint $\bm{z}$ across those in which $\delta = 0, \ldots, m$ categorical feature values deviate from $\bm{z}^i$ and subsequently picks the most violated constraint of these in time $\mathcal{O} (n + m^2)$. We note that the overall complexity of $\mathcal{O} (k + n + m^2)$ can be reduced to $\mathcal{O} (k + n + m \log m)$ by a clever use of data structures in the final step; for ease of exposition, we omit the details. Finally, since Algorithm~\ref{alg:most_violated} is applied to each data point $i \in [N]$, the overall complexity increases by a factor of $N$. \blue{We provide  additional intuition behind Algorithm~\ref{alg:most_violated} immediately before the proof of Theorem \ref{thm:algo_complex} in Appendix E.}


\section{Numerical Results}\label{sec:numericals}

Section~\ref{sec:num_res:runtimes} first compares the runtimes of our column-and-constraint scheme from Section~\ref{sec:cac_generation} with those of solving the DR logistic regression problem~\eqref{eq:cone_head_reformulation} na\"ively as a monolithic exponential conic program. We subsequently compare the classification performance of problem~\eqref{eq:the_mother_of_all_problems} with those of a classical unregularized and regularized logistic regression on standard benchmark instances with categorical features (Section~\ref{sec:num_res:cat_features}) and mixed features (Appendix C). 

All algorithms were implemented in Julia \cite{julia} (MIT license) and executed on Intel Xeon 2.66GHz processors with 8GB memory in single-core mode. We use MOSEK 9.3 \cite{mosek2010mosek} (commercial) to solve all exponential conic programs through JuMP \cite{JuMP} (MPL2 License). \blue{(We note the open source solvers Ipopt and CVXOpt could be used instead of MOSEK.)} All source codes and detailed results are available on GitHub (\url{https://github.com/selvi-aras/WassersteinLR}). 

\subsection{Runtime Comparison with Monolithic Formulation}\label{sec:num_res:runtimes}

\begin{figure}[tb]
    \centering
    \includegraphics[width=.32\textwidth]{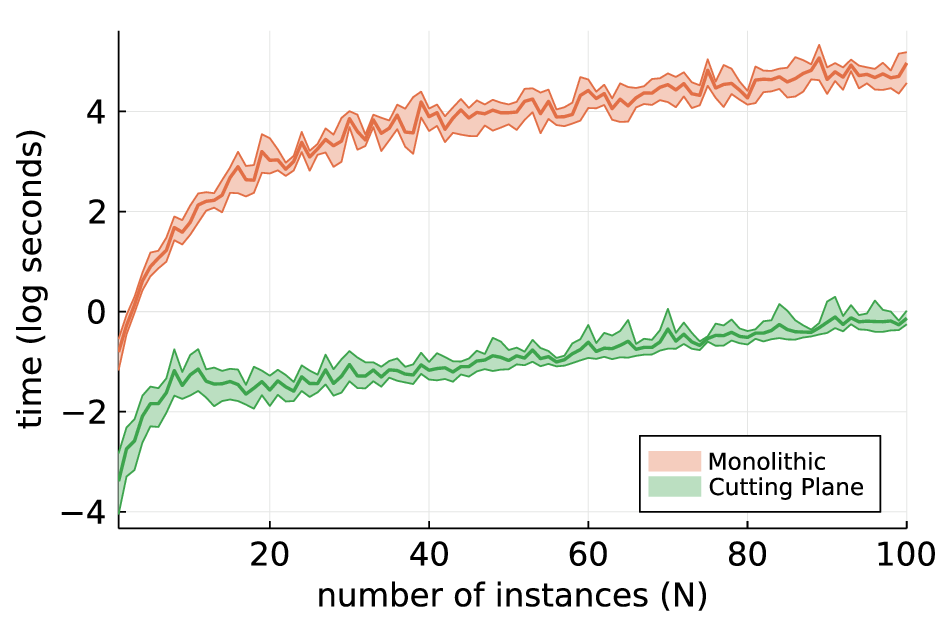} \hfill   \includegraphics[width=.32\textwidth]{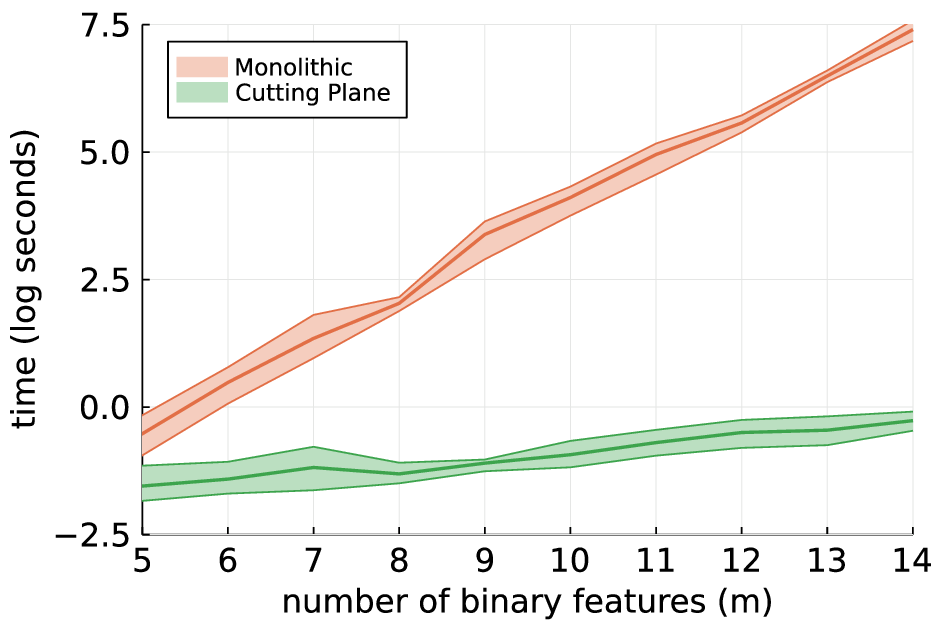} \hfill 
     \includegraphics[width=.32\textwidth]{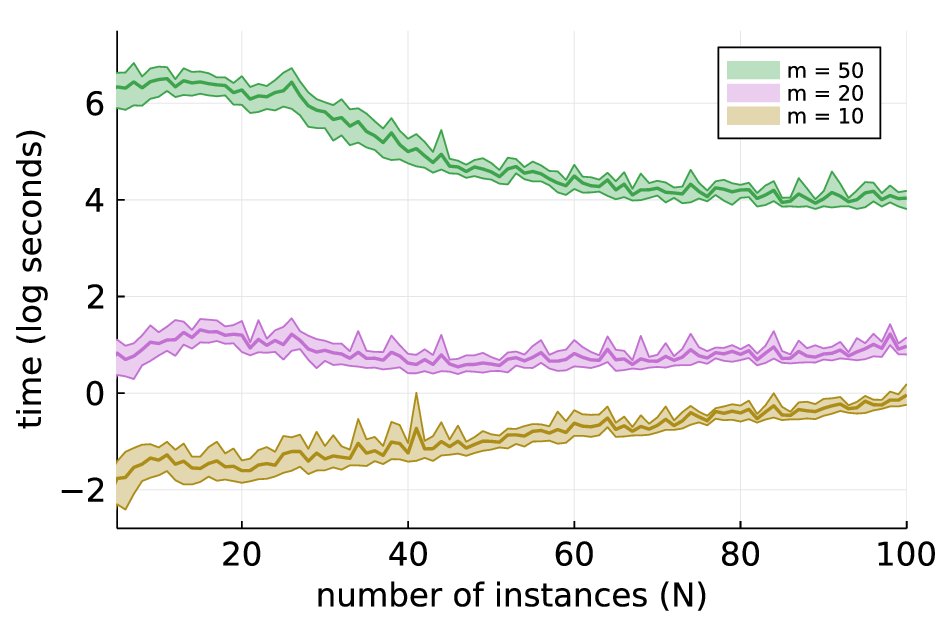}
    \caption{Runtime comparison between our column-and-constraint scheme and a na\"ive solution of problem~\eqref{eq:cone_head_reformulation} as a monolithic exponential conic program. \emph{Left:} Runtimes for $m = 10$ binary features as a function of the number $N$ of data points. \emph{Middle:} Runtimes for $N = 50$ as a function of $m$. \emph{Right:} Runtimes of our column-and-constraint scheme only for varying combinations of $m$ and $N$. In all graphs, shaded regions correspond to 10\%-90\% confidence regions and bold lines report median values over $50$ statistically independent runs. Note the log-scale in the {\color{black} plots}.} \label{fig:runtime_were_simply_the_best}
\end{figure}

We first compare the computation times of our column-and-constraint scheme from Section~\ref{sec:cac_generation} with those of solving the DR logistic regression problem~\eqref{eq:cone_head_reformulation} na\"ively as a monolithic exponential conic program. To this end, we randomly generate synthetic logistic regression instances with varying numbers $N$ of data points and $m$ of binary features. While Figure~\ref{fig:runtime_were_simply_the_best}~(left) shows that both approaches scale similarly in the number $N$ of data points, Figure~\ref{fig:runtime_were_simply_the_best}~(middle) reveals that the solution of the monolithic formulation scales exponentially in the number $m$ of binary features. In contrast, our column-and-constraint scheme scales gracefully in both the number $N$ of data points and the number $m$ of binary features (\emph{cf.}~Figure~\ref{fig:runtime_were_simply_the_best}, right). A similar behavior can be observed with more general, non-binary categorical features; we omit the results due to space constraints.

\subsection{Performance on Categorical-Feature Instances}\label{sec:num_res:cat_features}

We next compare the classification performance (in terms of the out-of-sample classification error) of our unregularized (`DRO') and Lasso-regularized (`r-DRO') DR logistic regression problem~\eqref{eq:the_mother_of_all_problems} with those of a classical unregularized (`LR') {\color{black} as well as Lasso-regularized (`r-LR'), mass transportation-regularized (MT) \cite{NIPS2015,JMLR:v20:17-633}, and robust Wasserstein profile inference-regularized (PI) \cite{blanchet_kang_murthy_2019}} logistic regression on the {\color{black} 14} most popular UCI data sets that only contain categorical features {\color{black} having more than 30 rows} \cite{UCI} (varying licenses). Instances with multiple output labels are indicated with a star; we convert them into instances with binary output labels by distinguishing between the majority class vs.~all other classes. The instances vary in the number $N$ of data points, the number $m$ of categorical features as well as, accordingly, the number $k$ of slopes considered in the one-hot encoding of the categorical features (\emph{cf.}~Section~\ref{sec:mixed_feature_lr:exponential_reform}). All results are reported as {\color{black} means} over {\color{black} 100} random training set-test set splits (80\%:20\%). The radius $\epsilon \in \{ 0, 10^{-5}, \ldots, 10^{-4}, \ldots, 1 \}$ of the Wasserstein ball as well as the Lasso penalty $\gamma \in \{ 0, \frac{1}{2} \cdot 10^{-5}, \ldots, \frac{1}{2} \cdot 10^{-4}, \ldots, \frac{1}{2} \}$ are selected via 5-fold cross-validation. We consider two variants of our DR logistic regression that employ a different output label weight ($\kappa = 1$ vs.~$\kappa = m$) in the ground metric (\emph{cf.}~Definition~\ref{def:metric}). The results are reported in Table~\ref{tab:categorical_results}. \blue{(Run-times and are provided in Appendix D where we also describe how statistical significance was assessed.)
The table shows that for the unregularized model, the classical logistic regression achieves the lowest classification error in 21\% of the instances, whereas our DR logistic regression achieve the lowest classification error in 36\% and 79\% of the instances for $\kappa = 1, \ m$, respectively. (The double-counting of ties means these percentages do not sum to 100\%.) For the regularized model, the results change to 14\% (classical logistic regression) vs.~57\% (each of our models). Within the numerical-feature Wasserstein DRO benchmarks, PI achieves the lowest classification error in 14\% of the instances, whereas MT achieves in 42\% and 57\% of the instances for $\kappa = 1, \ m$, respectively. Globally, one of our methods is the winning approach in 12 out of 14 datasets (in the remaining 2 they are the second best approach without statistically significant inferiority). In 9 of the datasets our methods win strictly (without a tie), and in 7 of these datasets the improvements are statistically significant over all other approaches.}

\begin{table}[tb]
{\color{black}
    \centering
    \resizebox{1.2\columnwidth}{!}{%
    \hskip-4.0cm
     \begin{tabular}{lccc|ccc|ccc|ccc}
\toprule
Data Set &  $N$ & $k$ & $m$ & LR  &  DRO ($\kappa=1$) &  DRO ($\kappa=m$)& r-LR  & r-DRO ($\kappa=1$) & r-DRO ($\kappa=m$) & MT ($\kappa = 1$) & MT ($\kappa = m$) & PI ($\alpha = 0.05$)\\
\midrule
     \underline{breast-cancer}  &  277  &  42 & 9 & 29.56\% & 29.15\% & \textbf{\textit{28.55\%}}\dag \ddag & 29.05\% & 29.16\% & \textit{28.82\%} & 29.44\% & 29.40\% & \textit{29.33\%} \\
     \underline{spect}  &267& 22 & 22  & 18.81\% & 17.72\% & \textit{17.51\%} & 17.79\% & 18.38\% & \textbf{\textit{15.83\%}}\dag \ddag & \textit{18.49\%} & 18.74\% & 20.60\% \\
     \underline{monks-3}  &554 & 11 & 6   & \textbf{\textit{2.07\%}} & 2.08\% & 2.10\% & 2.14\% & 2.19\% & \textit{2.13\%} & \textit{2.15\%} & \textit{2.15\%} & 37.87\%\\
      \underline{tic-tac-toe}  & 958 & 18 &  9   & 1.92\% & 1.69\% & \textbf{\textit{1.67\%}}\dag & \textit{1.68\%} & \textit{1.68\%} & 1.69\% & \textit{1.72\%} & 1.81\% & 30.02\% \\
      \underline{kr-vs-kp}  & 3,196 & 37 & 36 & \textbf{\textit{2.64\%}} & 2.66\% & \textbf{\textit{2.64\%}} & 2.69\% & \textit{2.66\%} & \textit{2.66\%} & 2.77\% & \textit{2.74\%} & 12.85\%\\
      \underline{balance-scale}$\star$ & 625 & 16 & 4  & 0.85\% & 0.82\% & \textit{0.72\%} & 0.73\% & 0.71\% &  \textbf{\textit{0.64\%}}\dag\ddag & 0.72\% & \textit{0.69\%} & 36.10\% \\
      \underline{hayes-roth}$\star$  & 160 & 11 & 4 & 17.00\% & 16.31\% & \textbf{\textit{16.09\%}}\dag \ddag & 18.66\% & 18.19\% & \textit{17.78\%} & 17.50\% & \textit{16.38\%} & 37.59\% \\
      \underline{lymphography}$\star$  & 148 & 42 &  18 & 21.17\% & 17.83\% & \textbf{\textit{16.72\%}}\dag \ddag & 17.38\% & 17.52\% & \textit{17.03\%} & 19.41\% & \textit{17.83\%} & 20.00\% \\
      \underline{car}$\star$  & 1,728 & 15 &  6  & \textbf{\textbf{\textit{4.73\%}}} & \textbf{\textbf{\textit{4.73\%}}} & \textbf{\textbf{\textit{4.73\%}}} & 4.99\% & \textit{4.76\%} & 4.87\% & \textbf{\textit{4.73\%}} & 4.77\% & 15.37\% \\
      \underline{splices}$\star$ & 3,189 & 229 & 60  & 6.80\% & \textbf{\textit{5.92\%}}\dag \ddag & 6.66\% & 6.22\% & \textit{5.99\%} & 6.02\% & \textit{6.57\%} & 6.87\% & 11.08\%  \\
      \underline{house-votes-84}  & 435  & 32 & 16  & 6.60\% & \textit{4.43\%} & 5.37\% & 4.54\% & \textit{4.41\%} & 5.16\% & \textit{4.78\%} & 5.54\% & \textbf{\textit{4.26\%}}\dag \\
      \underline{hiv}  & 6,590 & 152 & 8  & 5.94\% & 5.93\% & \textit{5.92\%} & \textbf{\textit{5.90\%}}\dag & \textbf{\textit{5.90\%}}\dag &\textbf{\textit{5.90\%}}\dag & 6.04\% & \textit{6.03\%} & 20.45\%  \\
      \underline{primacy-tumor}$\star$  & 339 & 25 & 17  & 13.93\% & \textbf{\textit{13.66\%}} & 13.76\% & 14.51\% & \textit{14.06\%} & 14.19\% & 13.87\% & \textit{13.85\%} & 19.18\% \\
      \underline{audiology}$\star$ & 226 & 92 & 69  & 14.18\% & \textit{14.03\%} & 14.40\% &3.11\%  & \textbf{\textit{2.80\%}}\dag\ddag & 3.91\% & 12.64\% & \textit{7.49\%} & 15.49\%\\
\bottomrule
\end{tabular}} \medskip
    \caption{\color{black} Classification errors of unregularized as well as Lasso-, mass transportation-, and profile inference-regularized variants of the classical logistic regression and our DR regression on UCI benchmark instances with categorical features only. The smallest error within each model group (unregularized vs.~regularized) is highlighted in italics, whereas the smallest error overall (across all groups) is printed in bold. The dagger (\dag) and double dagger (\ddag) symbols next to the best model denote statistically significant improvements over LR and the second best model, respectively.}
    \label{tab:categorical_results} \vspace{-0.5cm}
}
\end{table}

\section{Conclusions}

We proposed a new DR mixed-feature logistic regression model where the proximity between the empirical distribution and the unknown true data-generating distribution is measured by the popular Wasserstein distance. Despite its exponential-size formulation, we prove that the underlying optimization problem can be solved in polynomial time, and we develop a practically efficient column-and-constraint generation scheme for its solution. The promising performance of our model is demonstrated in numerical experiments on standard benchmark instances. We note that our column-and-constraint scheme readily extends to other DR mixed-feature machine learning models such as linear regression, support vector machines and decision trees. Applying our algorithm in those settings is a promising direction for future research. \blue{A further interesting direction is the development of tailored solution schemes rather than using off-the-shelf solvers for solving our DR mixed-feature logistic regression problem.}


\clearpage
\newpage
\begin{ack}
\blue{Funding from the EPSRC grant EP/W003317/1 is gratefully acknowledged. The first author acknowledges support from The Alan Turing Institute. 
The authors are grateful for the comments and suggestions of the anonymous reviewers, which have helped to significantly improve the manuscript. This work has been produced without the involvement of any commercial entities that may cause a conflict of interest or competing interests.}
\end{ack}
\bibliographystyle{plainnat}
\bibliography{bibliography}


\newpage

\section*{Appendix A: Performance Guarantees}\label{app:mixed_feature_lr:performance}

Wasserstein ambiguity sets benefit from measure concentration results that characterize the rate at which the empirical distribution $\widehat{\mathbb{P}}_N$ converges to the unknown true distribution $\mathbb{P}^0$. In the following, we review existing results from the literature to characterize the finite sample and asymptotic guarantees of our DR logistic regression~\eqref{eq:the_mother_of_all_problems}.\\

\begin{thm}[Finite Sample Guarantee]\label{thm:finite_sample_guarantee}
    Assume that $\mathbb{P}^0$ is light-tailed, that is, $\mathbb{E}_{\mathbb{P}^0} \left[ \exp (\lVert \bm{\xi} \rVert^a) \right] \leq A$ for some $a > 1$ and $A > 0$. Then there are $c_1, c_2 > 0$ only depending on $\mathbb{P}^0$ through the light-tail parameters $a$, $A$ and the feature space dimensions $(n, m)$ such that any optimizer $\bm{\beta}^\star$ to~\eqref{eq:the_mother_of_all_problems} satisfies
    \begin{equation*}
        [\mathbb{P}^0]^N \left( \mathbb{E}_{\mathbb{P}^0} \left[ l_{\bm{\beta}^\star} (\bm{x}, \bm{z}, y) \right] \, \leq \, \sup_{\mQ \in \mathfrak{B}_{\epsilon}(\widehat{\mP}_N)} \ \mE_{\mQ} \left[ l_{\bm{\beta}^\star}(\bx,\bz,y) \right] \right) \; \geq \; 1 - \eta
    \end{equation*}
    for any confidence level $\eta \in (0, 1)$ and Wasserstein ball radius
    \begin{equation*}
        \mspace{-10mu}
        \epsilon \; \geq \;
        \left( \frac{\log (c_1 / \eta)}{c_2 N} \right)^{1 / \max \{ m+n+1, \, 2 \}} \cdot \mathds{1} \left[ N \geq \frac{\log(c_1 / \eta)}{c_2} \right]
        \; + \;
        \left( \frac{\log (c_1 / \eta)}{c_2 N} \right)^{1 / \alpha} \cdot \mathds{1} \left[ N < \frac{\log(c_1 / \eta)}{c_2} \right].
    \end{equation*}
\end{thm}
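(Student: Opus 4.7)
The plan is to derive this guarantee by combining two standard ingredients: a measure concentration inequality that bounds the probability that $\mathbb{P}^0$ falls outside the Wasserstein ball $\mathfrak{B}_\epsilon(\widehat{\mathbb{P}}_N)$, and then the trivial observation that whenever $\mathbb{P}^0 \in \mathfrak{B}_\epsilon(\widehat{\mathbb{P}}_N)$, the worst-case expected loss at $\bm{\beta}^\star$ over the ambiguity set upper bounds the true expected loss at $\bm{\beta}^\star$ by the definition of the supremum. Concretely, the implication $\mathbb{P}^0 \in \mathfrak{B}_\epsilon(\widehat{\mathbb{P}}_N) \;\Longrightarrow\; \mathbb{E}_{\mathbb{P}^0}[l_{\bm{\beta}^\star}(\bm{x},\bm{z},y)] \leq \sup_{\mathbb{Q} \in \mathfrak{B}_\epsilon(\widehat{\mathbb{P}}_N)} \mathbb{E}_{\mathbb{Q}}[l_{\bm{\beta}^\star}(\bm{x},\bm{z},y)]$ reduces the task to lower bounding $[\mathbb{P}^0]^N(\mathrm{W}(\widehat{\mathbb{P}}_N,\mathbb{P}^0)\leq\epsilon)$.

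First, I would invoke the Fournier--Guionnet measure concentration inequality for the type-1 Wasserstein distance, which states that for a light-tailed distribution $\mathbb{P}^0$ on a $d$-dimensional Polish space with ground metric satisfying mild regularity, there exist constants $c_1, c_2 > 0$ depending only on the light-tail parameters $(a,A)$ and $d$ such that $[\mathbb{P}^0]^N(\mathrm{W}(\widehat{\mathbb{P}}_N, \mathbb{P}^0) > \epsilon) \leq c_1 \exp(-c_2 N \epsilon^{\max\{d,2\}})$ for $\epsilon$ small enough, and an analogous bound with exponent $a$ for large $\epsilon$. To apply this to our mixed space $\Xi = \mathbb{R}^n \times \mathbb{C} \times \{-1,+1\}$, I would verify that the ground metric $d$ from Definition~\ref{def:metric} is upper bounded (up to a multiplicative constant) by the Euclidean metric on the natural embedding $\Xi \hookrightarrow \mathbb{R}^{n+k+1}$, so that any Wasserstein ball in the Euclidean metric is contained in a correspondingly scaled Wasserstein ball in the ground metric $d$. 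Since the categorical and label coordinates are bounded, the light-tail assumption on $\mathbb{P}^0$ transfers directly, and the effective dimension is $m+n+1$ (one per numerical feature, one per categorical feature, one for the label).

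Second, I would invert the concentration inequality: setting the right-hand side equal to $\eta$ and solving for $\epsilon$ yields $\epsilon \geq (\log(c_1/\eta)/(c_2 N))^{1/\max\{m+n+1,2\}}$ in the regime $N \geq \log(c_1/\eta)/c_2$ where the small-$\epsilon$ branch applies, and the analogous expression with exponent $1/a$ in the opposite regime. This reproduces exactly the two-branch formula in the statement, with the indicator functions selecting the active regime as a function of $N$ relative to $\log(c_1/\eta)/c_2$. Combining with the implication from the first paragraph then delivers the desired probabilistic guarantee on $\bm{\beta}^\star$.

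The main obstacle I anticipate is the rigorous verification that the Fournier--Guionnet bound applies to our hybrid space with the mixed ground metric of Definition~\ref{def:metric}, since the classical statement is formulated for $\mathbb{R}^d$ with a Euclidean metric, whereas our metric combines a rational norm on $\mathbb{R}^n$, a $p$-th-root Hamming-type metric on $\mathbb{C}$, and a discrete label penalty scaled by $\kappa$. I expect this to be resolvable by either embedding $\Xi$ isometrically (up to constants) into $\mathbb{R}^{n+k+1}$ and transferring the inequality, or by conditioning on the finitely many joint realizations of $(\bm{z},y) \in \mathbb{C} \times \{-1,+1\}$ and applying the standard $\mathbb{R}^n$-result to each conditional distribution, before reassembling via a union bound; either route absorbs constants into $c_1$ and $c_2$ without altering the stated exponents.
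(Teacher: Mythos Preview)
Your proposal is correct and follows the same underlying approach as the paper: the paper's proof simply cites Theorems~18 and~19 of \cite{kmns19} without further elaboration, and those results are themselves built on exactly the two ingredients you describe---the Fournier--Guillin measure concentration bound (note the correct attribution; you wrote ``Fournier--Guionnet'') together with the inclusion $\mathbb{P}^0 \in \mathfrak{B}_\epsilon(\widehat{\mathbb{P}}_N) \Rightarrow \mathbb{E}_{\mathbb{P}^0}[l_{\bm{\beta}^\star}] \leq \sup_{\mathbb{Q}\in\mathfrak{B}_\epsilon(\widehat{\mathbb{P}}_N)}\mathbb{E}_{\mathbb{Q}}[l_{\bm{\beta}^\star}]$. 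Your more detailed discussion of how to transfer the concentration bound to the mixed ground metric of Definition~\ref{def:metric} is not addressed explicitly in the paper, which defers entirely to the cited reference.
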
\medskip

Recall that $[\mathbb{P}^0]^N$ in the statement of Theorem~\ref{thm:finite_sample_guarantee} refers to the $N$-fold product distribution of $\mathbb{P}^0$ that governs the data set $\{ \bm{\xi}^i \}_{[i \in N]}$ upon which the optimizer(s) $\bm{\beta}^\star$ of problem~\eqref{eq:the_mother_of_all_problems} depend(s) via $\widehat{\mP}_N$. Theorem~\ref{thm:finite_sample_guarantee} shows that with arbitrarily high probability $1 - \eta$, the optimal value $\sup_{\mQ \in \mathfrak{B}_{\epsilon}(\widehat{\mP}_N)} \ \mE_{\mQ} \left[ l_{\bm{\beta}^\star}(\bx,\bz,y) \right]$ of our DR logistic regression~\eqref{eq:the_mother_of_all_problems} \emph{over}estimates the loss $\mathbb{E}_{\mathbb{P}^0} \left[ l_{\bm{\beta}^\star} (\bm{x}, \bm{z}, y) \right]$ incurred by any optimal solution $\bm{\beta}^\star$ under the unknown true distribution $\mathbb{P}_0$ as long as the radius $\epsilon$ of the Wasserstein ball $\mathfrak{B}_{\epsilon}(\widehat{\mP}_N)$ is sufficiently large. Since the categorical features attain finitely many different values, the bound of Theorem~\ref{thm:finite_sample_guarantee} can be sharpened by replacing $m + n + 1$ with $n + 1$ if the constants $a$ and $A$ are adapted accordingly. We emphasize that the decay rate of $\mathcal{O} (N^{-1/(n+1)})$ in Theorem~\ref{thm:finite_sample_guarantee} is essentially optimal; see \cite[\S 3]{kmns19}.

To study the asymptotic consistency of problem~\eqref{eq:the_mother_of_all_problems} as well as the existence of sparse worst-case distributions, we first introduce a technical assumption.

\begin{defi}[Growth Condition]\label{defi:growth_condition}
    We say that the DR logistic regression~\eqref{eq:the_mother_of_all_problems} satisfies the \emph{growth condition} if \emph{(i)} the hypotheses $\bm{\beta}$ are restricted to a bounded set $\mathcal{H} \subseteq \mathbb{R}^{1 + n + k}$; and \emph{(ii)} there is $\bm{\xi}^0 \in \Xi$ and $C > 0$ such that $l_{\bm{\beta}} (\bm{\xi}) \leq C [1 + d (\bm{\xi}, \bm{\xi}^0)]$ across all $\bm{\beta} \in \mathcal{H}$ and $\bm{\xi} \in \Xi$.
\end{defi}

\begin{lem}\label{lem:growth_condition}
    If we restrict the hypotheses $\bm{\beta}$ to a bounded set $\mathcal{H} \subseteq \mathbb{R}^{1 + n + k}$, then the DR logistic regression~\eqref{eq:the_mother_of_all_problems} satisfies the growth condition of Definition~\ref{defi:growth_condition}.
\end{lem}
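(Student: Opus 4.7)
\textbf{Proof Plan for Lemma~\ref{lem:growth_condition}.}

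Part \emph{(i)} of the growth condition is given by assumption, so the entire task reduces to establishing part \emph{(ii)}: exhibiting a reference point $\bm{\xi}^0 \in \Xi$ and a constant $C > 0$ such that $l_{\bm{\beta}}(\bm{\xi}) \leq C[1 + d(\bm{\xi}, \bm{\xi}^0)]$ uniformly over $\bm{\beta} \in \mathcal{H}$ and $\bm{\xi} \in \Xi$. The plan is to exploit the standard linear-in-the-tail behaviour of the log-loss, namely the elementary inequality
\begin{equation*}
    \log(1 + e^t) \;\leq\; \log 2 + \max\{0, t\} \;\leq\; \log 2 + |t| \qquad \forall t \in \mathbb{R},
\end{equation*}
which I would prove in one line by splitting on the sign of $t$. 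Applied with $t = -y \cdot (\beta_0 + \bm{\beta}_{\text{N}}{}^\top \bm{x} + \bm{\beta}_{\text{C}}{}^\top \bm{z})$ and using $|y| = 1$, this bounds $l_{\bm{\beta}}(\bm{\xi})$ by $\log 2 + |\beta_0| + |\bm{\beta}_{\text{N}}{}^\top \bm{x}| + |\bm{\beta}_{\text{C}}{}^\top \bm{z}|$.

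The next step is to choose the reference point. I would take $\bm{\xi}^0 := (\bm{0}, \bm{z}^0, y^0)$ for any fixed $\bm{z}^0 \in \mathbb{C}$ and $y^0 \in \{-1, +1\}$. With this choice, Definition~\ref{def:metric} gives $d(\bm{\xi}, \bm{\xi}^0) = \lVert \bm{x} \rVert + d_{\text{C}}(\bm{z}, \bm{z}^0) + \kappa \cdot \mathds{1}[y \neq y^0] \geq \lVert \bm{x} \rVert$, so the norm distance $\lVert \bm{x} \rVert$ is dominated by $d(\bm{\xi}, \bm{\xi}^0)$. Applying the dual-norm inequality $|\bm{\beta}_{\text{N}}{}^\top \bm{x}| \leq \lVert \bm{\beta}_{\text{N}} \rVert_* \lVert \bm{x} \rVert$ therefore yields
\begin{equation*}
    l_{\bm{\beta}}(\bm{\xi}) \;\leq\; \log 2 + |\beta_0| + |\bm{\beta}_{\text{C}}{}^\top \bm{z}| + \lVert \bm{\beta}_{\text{N}} \rVert_* \cdot d(\bm{\xi}, \bm{\xi}^0).
\end{equation*}

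It then remains to make the coefficients uniform in $\bm{\beta} \in \mathcal{H}$ and $\bm{z} \in \mathbb{C}$. Because $\mathcal{H}$ is bounded in the finite-dimensional space $\mathbb{R}^{n+k+1}$, the quantities $M_0 := \sup_{\bm{\beta} \in \mathcal{H}} |\beta_0|$ and $M_{\text{N}} := \sup_{\bm{\beta} \in \mathcal{H}} \lVert \bm{\beta}_{\text{N}} \rVert_*$ are finite (all norms on a finite-dimensional space are equivalent, so a bound in the Euclidean norm implies a bound in the dual norm $\lVert \cdot \rVert_*$). Moreover, $\mathbb{C}$ is a \emph{finite} set, so the continuous function $(\bm{\beta}, \bm{z}) \mapsto |\bm{\beta}_{\text{C}}{}^\top \bm{z}|$ is bounded by some $M_{\text{C}} < \infty$ on the bounded set $\mathcal{H} \times \mathbb{C}$. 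Setting $C := \max\{ \log 2 + M_0 + M_{\text{C}}, \, M_{\text{N}} \}$ gives $l_{\bm{\beta}}(\bm{\xi}) \leq C + C \cdot d(\bm{\xi}, \bm{\xi}^0) = C[1 + d(\bm{\xi}, \bm{\xi}^0)]$ as required.

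This proof is essentially a bookkeeping argument, and there is no serious obstacle. The only mildly subtle points are \emph{(a)} choosing $\bm{\xi}^0$ so that the numerical-feature contribution $\lVert \bm{x} \rVert$ is exactly the part of $d$ that must be linked to $|\bm{\beta}_{\text{N}}{}^\top \bm{x}|$, which grows unboundedly on $\Xi$, and \emph{(b)} noting that the categorical term $|\bm{\beta}_{\text{C}}{}^\top \bm{z}|$ and the intercept $|\beta_0|$ need \emph{not} be tied to $d(\bm{\xi}, \bm{\xi}^0)$ because they are uniformly bounded---the former by finiteness of $\mathbb{C}$ and the latter by boundedness of $\mathcal{H}$---and can therefore be absorbed into the additive constant.
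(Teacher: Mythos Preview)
Your proposal is correct and follows essentially the same route as the paper's own proof: both choose $\bm{\xi}^0 = (\bm{0}, \bm{z}^0, y^0)$, bound $\log(1+e^t)$ by $\log 2 + |t|$, discard the nonnegative categorical and label contributions in $d(\bm{\xi}, \bm{\xi}^0)$ so that only $\lVert \bm{x} \rVert$ needs to control the growing part, and then absorb the remaining terms using boundedness of $\mathcal{H}$ and finiteness of $\mathbb{C}$. Your bookkeeping via the dual-norm inequality and the suprema $M_0, M_{\text{N}}, M_{\text{C}}$ is in fact slightly cleaner than the paper's, which enlarges $\mathcal{H}$ to a hypercube $[-M,M]^{n+k+1}$ and writes down the specific constant $C = \log 2 + M(1+k)$.
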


We are now in the position to study the asymptotic consistency of problem~\eqref{eq:the_mother_of_all_problems}.

\begin{thm}[Asymptotic Consistency]\label{thm:asymptotic_consistency}
    Under the assumptions of Theorem~\ref{thm:finite_sample_guarantee}, we have
    \begin{equation*}
        \sup_{\mQ \in \mB_{\epsilon_N}(\widehat{\mP}_N)} \ \mE_{\mQ} \left[ l_{\bm{\beta}^\star}(\bx,\bz,y) \right]
        \; \underset{N \rightarrow \infty}{\longrightarrow} \;
        \mathbb{E}_{\mathbb{P}^0} \left[ l_{\bm{\beta}^\star} (\bm{x}, \bm{z}, y) \right]
        \quad \mathbb{P}^0\text{-a.s.}
    \end{equation*}
    whenever $(\eta_N, \epsilon_N)$ is set according to Theorem~\ref{thm:finite_sample_guarantee} for all $N \in \mathbb{N}$, $\sum_N \eta_N < \infty$, $\lim_{N \rightarrow \infty} \epsilon_N = 0$, and the growth condition in Definition~\ref{defi:growth_condition} is satisfied.
\end{thm}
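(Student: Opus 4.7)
My plan is to sandwich the worst-case expectation between two bounds that both converge to $\mathbb{E}_{\mathbb{P}^0}[l_{\bm{\beta}^\star}]$ almost surely. First, I would translate the finite-sample guarantee of Theorem~\ref{thm:finite_sample_guarantee} into a statement about the Wasserstein distance itself: the underlying measure-concentration inequality of Fournier--Guillin (see \cite{kmns19}), from which Theorem~\ref{thm:finite_sample_guarantee} is derived, yields
\[
[\mathbb{P}^0]^N \bigl( \mathrm{W}(\widehat{\mathbb{P}}_N, \mathbb{P}^0) > \epsilon_N \bigr) \; \leq \; \eta_N
\]
for the chosen schedule $(\epsilon_N, \eta_N)$. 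Since $\sum_N \eta_N < \infty$, Borel--Cantelli implies that $\mathbb{P}^0$-almost surely $\mathrm{W}(\widehat{\mathbb{P}}_N, \mathbb{P}^0) \leq \epsilon_N$ for all sufficiently large $N$; I work on this almost-sure event in the sequel.

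Next, I will handle the two inequalities separately. For the $\liminf$ direction, on the good event $\mathbb{P}^0 \in \mathfrak{B}_{\epsilon_N}(\widehat{\mathbb{P}}_N)$, so trivially $\sup_{\mQ \in \mathfrak{B}_{\epsilon_N}(\widehat{\mP}_N)} \mE_\mQ[l_{\bm{\beta}^\star}] \geq \mathbb{E}_{\mathbb{P}^0}[l_{\bm{\beta}^\star}]$. For the $\limsup$ direction, the triangle inequality for the Wasserstein metric yields $\mathfrak{B}_{\epsilon_N}(\widehat{\mathbb{P}}_N) \subseteq \mathfrak{B}_{2\epsilon_N}(\mathbb{P}^0)$ on the good event, hence
\[
\sup_{\mQ \in \mathfrak{B}_{\epsilon_N}(\widehat{\mP}_N)} \mE_\mQ[l_{\bm{\beta}^\star}] \; \leq \; \sup_{\mQ \in \mathfrak{B}_{2\epsilon_N}(\mathbb{P}^0)} \mE_\mQ[l_{\bm{\beta}^\star}],
\]
and it suffices to prove the deterministic continuity statement $\sup_{\mQ \in \mathfrak{B}_\epsilon(\mathbb{P}^0)} \mE_\mQ[l_{\bm{\beta}^\star}] \longrightarrow \mathbb{E}_{\mathbb{P}^0}[l_{\bm{\beta}^\star}]$ as $\epsilon \downarrow 0$.

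To establish this continuity at zero I would invoke the strong-duality reformulation of the Wasserstein worst-case expectation (as in Gao--Kleywegt or Mohajerin~Esfahani--Kuhn),
\[
\sup_{\mQ \in \mathfrak{B}_\epsilon(\mathbb{P}^0)} \mE_\mQ[l_{\bm{\beta}^\star}] \;=\; \inf_{\lambda \geq 0} \Bigl\{ \lambda \epsilon + \mathbb{E}_{\mathbb{P}^0} \bigl[ \phi_\lambda(\bm{\xi}) \bigr] \Bigr\}, \quad \phi_\lambda(\bm{\xi}) := \sup_{\bm{\xi}' \in \Xi} \bigl( l_{\bm{\beta}^\star}(\bm{\xi}') - \lambda\, d(\bm{\xi}, \bm{\xi}') \bigr).
\]
The growth condition of Definition~\ref{defi:growth_condition} ensures that for every $\lambda \geq C$ the integrand $\phi_\lambda$ is finite, dominated by the $\mathbb{P}^0$-integrable envelope $C(1 + d(\bm{\xi}^0, \bm{\xi}))$ (using light-tailedness), and decreases pointwise to $l_{\bm{\beta}^\star}(\bm{\xi})$ as $\lambda \uparrow \infty$. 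Choosing a schedule $\lambda_\epsilon \uparrow \infty$ with $\lambda_\epsilon \epsilon \downarrow 0$ and invoking the monotone/dominated convergence theorem on $\mathbb{E}_{\mathbb{P}^0}[\phi_{\lambda_\epsilon}]$ yields $\sup_{\mQ \in \mathfrak{B}_\epsilon(\mathbb{P}^0)} \mE_\mQ[l_{\bm{\beta}^\star}] \to \mathbb{E}_{\mathbb{P}^0}[l_{\bm{\beta}^\star}]$, completing the sandwich.

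The main obstacle is the continuity-at-zero step: the Borel--Cantelli argument and the triangle-inequality sandwich are essentially routine, but the interchange of limits hidden in the dual formulation relies critically on the growth condition to produce a $\mathbb{P}^0$-integrable envelope uniform in $\lambda$; this is precisely where Lemma~\ref{lem:growth_condition} and the light-tail assumption of Theorem~\ref{thm:finite_sample_guarantee} both feed in. A secondary, minor point is that if $\bm{\beta}^\star$ is interpreted as a (possibly $N$-dependent) minimiser, one should read the growth constant $C$ as the uniform constant furnished by Lemma~\ref{lem:growth_condition} on the bounded hypothesis class $\mathcal{H}$, so that the envelope $C(1 + d(\bm{\xi}^0, \cdot))$ remains valid along the entire sequence $\{\bm{\beta}^\star_N\}_N$.
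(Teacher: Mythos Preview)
Your argument is correct and is essentially the standard proof of asymptotic consistency for Wasserstein DRO: Borel--Cantelli on the Fournier--Guillin concentration bound, the triangle-inequality sandwich $\mathfrak{B}_{\epsilon_N}(\widehat{\mathbb{P}}_N)\subseteq\mathfrak{B}_{2\epsilon_N}(\mathbb{P}^0)$, and continuity of the dual value at $\epsilon=0$ via the growth condition and dominated convergence. The paper, however, does not carry out any of these steps explicitly; its entire proof is a one-line citation of Theorem~20 in \cite{kmns19}. What you have written is, in effect, a self-contained reconstruction of that cited result specialised to the present setting. The two are therefore not in tension---your route is simply more explicit---and the only place where your write-up goes beyond the paper is in spelling out how the growth condition (Lemma~\ref{lem:growth_condition}) and the light-tail assumption combine to furnish the integrable envelope needed for the limit interchange, which is exactly the content the paper delegates to \cite{kmns19}.
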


Theorem~\ref{thm:asymptotic_consistency} shows that the DR logistic regression~\eqref{eq:the_mother_of_all_problems} achieves asymptotic consistency if the \mbox{(un-)confidence} parameter $\eta$ and the radius $\epsilon$ of the Wasserstein ball are reduced simultaneously. Thus, any optimal solution to~\eqref{eq:the_mother_of_all_problems} converges to the optimal solution of the (non-robust) logistic regression under the unknown true distribution $\mathbb{P}^0$ when the size of the data set increases.

The proof of Theorem~\ref{thm:coneheads} shows that the optimization problem characterizing the worst-case distribution $\mathbb{Q}^\star \in \mathfrak{B}_{\epsilon}(\widehat{\mP}_N)$ comprises exponentially many decision variables. It is therefore natural to investigate the complexity of worst-case distributions to our  DR logistic regression~\eqref{eq:the_mother_of_all_problems}. The next result shows that there exist worst-case distributions that exhibit a desirable sparsity pattern: their numbers of atoms scale with the number of data samples.

\begin{thm}[Existence of Sparse Worst-Case Distributions]\label{thm:existence_sparse_distributions}
    Assume that the growth condition in Definition~\ref{defi:growth_condition} is satisfied. Then there are worst-case distributions $\mathbb{Q}^\star \in \mathfrak{B}_{\epsilon}(\widehat{\mP}_N)$ satisfying
    \begin{equation*}
        \mE_{\mQ^\star} \left[ l_{\bm{\beta}^\star}(\bx,\bz,y) \right]
        \; = \;
        \sup_{\mQ \in \mathfrak{B}_{\epsilon}(\widehat{\mP}_N)} \ \mE_{\mQ} \left[ l_{\bm{\beta}^\star}(\bx,\bz,y) \right]
    \end{equation*}
    such that $\mathbb{Q}^\star$ is supported on at most $N + 1$ atoms.
\end{thm}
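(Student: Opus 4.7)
My plan is to cast the worst-case expectation as a semi-infinite linear program over joint couplings and then invoke an extreme-point argument to bound the number of atoms of any maximizer.

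Using Definition~\ref{def:wasserstein}, I first rewrite the inner supremum over $\mQ \in \mathfrak{B}_\epsilon(\widehat{\mP}_N)$ as a maximization over couplings $\Pi \in \mathcal{P}_0(\Xi^2)$ whose second marginal equals $\widehat{\mP}_N$ and whose transport cost is bounded by $\epsilon$. Since $\widehat{\mP}_N$ is supported on the $N$ data points $\bxi^1, \ldots, \bxi^N$, any feasible $\Pi$ can be identified with a non-negative measure $\widetilde\Pi$ on $\Xi \times [N]$ that places total mass $\tfrac{1}{N}$ on each slice $\Xi \times \{i\}$ and satisfies $\int d(\bxi, \bxi^i)\, \mathrm{d}\widetilde\Pi(\bxi, i) \leq \epsilon$; the worst-case distribution is then recovered as the first marginal $\mQ^\star(\cdot) = \widetilde\Pi^\star(\cdot \times [N])$. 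The worst-case problem thus becomes the semi-infinite linear program
\begin{equation*}
    \sup_{\widetilde\Pi \geq 0}\; \int l_{\bbeta^\star}(\bxi)\, \mathrm{d}\widetilde\Pi(\bxi, i)
    \quad \text{s.t.}\quad
    \widetilde\Pi(\Xi \times \{i\}) = \tfrac{1}{N}\;\; \forall i \in [N], \quad
    \int d(\bxi, \bxi^i)\, \mathrm{d}\widetilde\Pi(\bxi, i) \leq \epsilon.
\end{equation*}

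\textbf{Extreme-point argument.} Beyond non-negativity, the feasible region of this program is defined by exactly $N$ equality constraints (the slice marginals) and a single inequality constraint (the transport budget). A standard extreme-point / Bauer-type argument, of the form already used throughout the Wasserstein DRO literature \cite{Gao_Kleywegt_2016, med17}, shows that the linear objective is maximized at an extreme point of this feasible region and that every such extreme $\widetilde\Pi^\star$ is supported on at most $N + 1$ pairs $(\bxi, i) \in \Xi \times [N]$. Projecting onto the first coordinate then yields a worst-case $\mQ^\star$ supported on at most $N + 1$ atoms, which is precisely the claim.

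\textbf{Attainment and main obstacle.} To legitimately apply the extreme-point argument I need to establish that the supremum is attained, which is where the growth condition enters. The bound $l_{\bbeta^\star}(\bxi) \leq C[1 + d(\bxi, \bxi^0)]$ of Definition~\ref{defi:growth_condition}, together with the triangle inequality and the budget constraint, implies that the objective is uniformly bounded on the feasible set; moreover, the budget constraint confines the mass of any near-optimal $\widetilde\Pi$ to a $d$-bounded subset, so weak compactness of tight families of probability measures combined with continuity of $l_{\bbeta^\star}$ delivers the existence of a maximizer. I expect the main obstacle to be making the extreme-point bound rigorous in this infinite-dimensional measure space. An equivalent route, which sidesteps the abstract extreme-point machinery, is to dualize the budget constraint: for any optimal Lagrange multiplier $\lambda^\star \geq 0$, each conditional slice of $\widetilde\Pi^\star$ must concentrate on maximizers of $l_{\bbeta^\star}(\bxi) - \lambda^\star d(\bxi, \bxi^i)$, so placing a single atom per data point is optimal with the exception of at most one index where a two-atom mixture is needed to enforce complementary slackness in the budget, again yielding at most $N + 1$ atoms in $\mQ^\star$.
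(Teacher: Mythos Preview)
Your argument is correct and, mathematically, is exactly the route underlying the results the paper invokes: the paper's own proof consists entirely of citing Theorems~3 and~4 of \cite{YKW21:linear_optimization_wasserstein} for attainment and sparsity, together with the observation that $\widehat{\mathbb{P}}_N$ has finite moments so their hypotheses are met. Your semi-infinite LP over couplings with $N$ marginal equalities and one budget inequality, followed by a Richter--Rogosinski / Bauer extreme-point bound (and the alternative per-slice dualization giving one atom per data point plus a single split to saturate the budget), is precisely how those cited theorems and the analogous results in \cite{Gao_Kleywegt_2016, med17} are proved. In short, you have unpacked the black box the paper leaves closed; what you gain is self-containment, what the paper gains is brevity.

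One small imprecision worth tightening: the budget constraint does not literally ``confine the mass of any near-optimal $\widetilde\Pi$ to a $d$-bounded subset''; it only bounds the \emph{average} transport cost, so tightness comes via a Markov-type inequality, and upper semi-continuity of the unbounded integrand $l_{\bbeta^\star}$ requires pairing the growth condition with Wasserstein-1 convergence rather than mere weak convergence. You already flag attainment as the main obstacle, and your dualization alternative is the standard constructive way to sidestep it.
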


\blue{
Our performance guarantees in this section scale with the dimension of the feature space as we seek for a high confidence of the unknown true distribution being contained in our ambiguity set $\mathfrak{B}_\epsilon (\widehat{\mathbb{P}}_N)$. A dimension-independent performance guarantee can be obtained along the lines of \cite{blanchet_kang_murthy_2019} if one instead only seeks for a high confidence of the unknown true model $\bm{\beta}^\star$ being contained in the union of optimal classifiers corresponding to the individual distributions $\mathbb{P}$ contained in $\mathfrak{B}_\epsilon (\widehat{\mathbb{P}}_N)$. We refer to \cite{https://doi.org/10.1002/wics.1587} for a detailed review of the achievable performance guarantees of Wasserstein ambiguity sets with respect to the dimension of the feature space.}

{\color{black}
\section*{Appendix B: Finding the Worst Distribution in Example of Section~\ref{sec:comparison_to_continuous}}
We provide further details here on the derivation of the worst distribution associated with the benchmark in Section~\ref{sec:comparison_to_continuous} where we treat the (only) binary feature as a numerical feature and use the DR logistic regression algorithm of \cite{NIPS2015}. We have a single binary feature $z \in \{-1,1\}$ and there is no intercept term so the log-loss function is $l_{\beta}(z,y) = \log(1 + \exp(-\beta\cdot y\cdot z))$. The true unknown value of $\beta$ is $\beta = 1$, we have a data set $(z^i, y^i)_{i \in [N]}$, and we take $p=1$, $\kappa=1$ (distance metric parameters), and $\epsilon =  1 / (2\sqrt{N})$. We would like to solve the worst distribution problem
\begin{align}\label{eq:worst_dist}
    \displaystyle \sup_{\mathbb{Q} \in \mathfrak{B}_\epsilon (\widehat{\mathbb{P}}_N)} \; \mathbb{E}_\mathbb{Q} \left[ l_{{\beta}} (z, y) \right].
\end{align}

We can do this by considering  the following problem taken from \cite[Thm 20]{JMLR:v20:17-633} and adopted for our specific setting:
\begin{equation}\label{solve_this_for_prize}
    \begin{array}{l@{\quad}l@{\qquad}l}
        \displaystyle \mathop{\text{maximize}}_{\theta,\; \{ \alpha_i\}_{i \in [N]}} & \displaystyle \theta + \dfrac{1}{N} \sum_{i =1}^N \left[ (1- \alpha_i) l_\beta(z^i, y^i) + \alpha_i l_{\beta}(z^i, -y^i) \right] \\[4mm]
        \displaystyle \text{subject to} & \displaystyle \theta + \dfrac{1}{N} \sum_{i=1}^N  \alpha_i = \epsilon - \gamma \\[4mm]
        & \displaystyle 0 \leq \alpha_i \leq 1, \quad i \in [N] \\[3mm]
        & \displaystyle \theta \geq 0. 
    \end{array}
\end{equation}
It is parametrized by some $\gamma \in [0, \min\{ \epsilon, 1\}]$ and \cite[Thm 20]{JMLR:v20:17-633} show that its optimal value  for the case $\gamma = 0$ coincides with the optimal value of problem~\eqref{eq:worst_dist}. Furthermore, if we denote by $(\theta^\star(\gamma), \ \{\alpha^\star_i(\gamma) \}_{i \in [N]})$ the optimal solutions to (\ref{solve_this_for_prize}), then the sequence of probability distributions 
\begin{align}\label{eq:distribution_to_derive}
\begin{array}{ll}
    \displaystyle \mathbb{Q}_{\gamma} = &  \displaystyle  \dfrac{1}{N} \sum_{i=2}^N \left[ (1- \alpha^\star_i(\gamma)) \delta_{(z^i, y^i)} + \alpha^\star_i(\gamma)\delta_{(z^i, -y^i)} \right] + \dfrac{\eta(\gamma)}{N} \delta_{(z^1 + \frac{\theta^\star(\gamma)N}{\eta(\gamma)}, y^1)} \\[4mm]
    & \displaystyle  + \dfrac{1 - \eta(\gamma)}{N} \left[ (1 - \alpha^\star_1(\gamma))\delta_{(z^1, y^1)} + \alpha^\star_1(\gamma) \delta_{(z^1, -y^1)} \right]
\end{array}
\end{align}
where $\eta(\gamma) := \gamma / (\theta^\star(\gamma) + 2 - \epsilon + \gamma)$, constructs an asymptotically optimal solution to problem~\eqref{eq:worst_dist} as $\gamma \downarrow 0$. In order to explicitly characterize this sequence, we derive a closed-form solution to problem~\eqref{solve_this_for_prize}. Using the equality constraint to substitute for $\theta$ in the objective in (\ref{eq:distribution_to_derive}) yields
\begin{equation*}
    \begin{array}{l@{\quad}l@{\qquad}l}
        \displaystyle \mathop{\text{maximize}}_{\{ \alpha_i\}_{i \in [N]}} & \displaystyle \epsilon - \gamma - \dfrac{1}{N} \sum_{i=1}^N  \alpha_i + \dfrac{1}{N} \sum_{i =1}^N \left[ (1- \alpha_i) l_\beta(z^i, y^i) + \alpha_i l_{\beta}(z^i, -y^i) \right] \\[4mm]
        \displaystyle \text{subject to} & \displaystyle 0 \leq \alpha_i \leq 1, \quad i \in [N]\\[3mm]
        &\displaystyle  \epsilon - \gamma \geq  \dfrac{1}{N}\sum_{i=1}^N \alpha_i.
    \end{array}
\end{equation*}
Ignoring the constant terms in the objective and re-arranging terms yields
\begin{equation*}
    \begin{array}{l@{\quad}l@{\qquad}l}
        \displaystyle \mathop{\text{maximize}}_{\{ \alpha_i\}_{i \in [N]}} & \displaystyle \sum_{i =1}^N \alpha_i \left[ -1 + l_\beta(z^i, -y^i) - l_\beta(z^i, y^i)  \right] \\[4mm]
        \displaystyle \text{subject to} & \displaystyle 0 \leq \alpha_i \leq 1, \quad i \in [N]\\[2mm]
        &\displaystyle  \epsilon - \gamma \geq  \dfrac{1}{N}\sum_{i=1}^N \alpha_i.
    \end{array}
\end{equation*}
Since $z^i \in \{-1, +1\}, \ i \in [N]$ holds, we have $l_\beta(z^i, -y^i) - l_\beta(z^i, y^i) \in \{-1, +1\}$, which implies that the coefficients of $\alpha_i$ in the objective function are non-positive. Hence, $\alpha^\star_i = 0, \ i \in [N]$ is an optimal solution to the  problem. This implies $\alpha^\star_i(\gamma) = 0, \ i \in [N]$ and $\theta^\star(\gamma) = \epsilon - \gamma$ are optimal in problem~\eqref{solve_this_for_prize}.

We can then observe from~\eqref{eq:distribution_to_derive} that the worst distribution places $1/N$ mass on each data point $i = 2, \ldots, N$, and the remaining $1 / N$ mass is distributed as: $(1 - \eta(\gamma))/N$ mass on the data point $i = 1$ and $\eta(\gamma)/N$ mass on the point $(z^1 + \frac{(\epsilon - \gamma)N}{\gamma / 2}, y^1)$ which is \textbf{not} in the data-set and in fact is an infeasible point. In our experiments, we take $\gamma = 10^{-3}$, because the optimal value of problem~\eqref{solve_this_for_prize} numerically converges after this value, and one can then verify that in the setting we work with ($N = 250)$ a mass of $\frac{10^{-3}}{500}$ is placed on the point with feature $z^1 + \frac{(\epsilon - \gamma)N}{\gamma / 2} = 1 + \frac{(\frac{1}{1\sqrt{N}} - 10^{-3})N}{10^{-3} / 2} \approxeq 15,312$ and label $y^1$. This summarizes the specific approach we took to obtain the worst distribution in Figure~\ref{fig:were_simply_the_best} and demonstrates the key problem that arises with treating categorical variables as numerical.
} 

\section*{Appendix C: Numerical Results on UCI Data-Sets with Mixed Features} \label{sec:num_res:mixed_features}

We repeat the experiment of the Section \ref{sec:num_res:cat_features} for the five most popular mixed-feature instances of the UCI data set \cite{UCI}. The results are reported in Table~\ref{tab:mixed_results}. \blue{All results are reported as medians over 20 random training set-test set splits ($80\%$:$20\%$). Cross-validation is applied for the same set of parameters as Section~\ref{sec:num_res:cat_features} over identical grids.} The conclusions are qualitatively similar to those of  Section~\ref{sec:num_res:cat_features}. We highlight, however, that now only two of the lowest classification errors are achieved by one of the non-robust models, whereas the lowest classification error in each instance is obtained by at least one of the robust models we propose. 

\begin{table}[!h]
{\color{black}
    \centering
    \resizebox{1.2\columnwidth}{!}{%
    \hskip-4.0cm
     \begin{tabular}{lcccc|ccc|ccc|ccc}
\toprule
Data Set &  $N$ & $n$ & $k$ & $m$ & LR  &  DRO ($\kappa=1$) &  DRO ($\kappa=m$)& r-LR  & r-DRO ($\kappa=1$) & r-DRO ($\kappa=m$) & MT ($\kappa = 1$) & MT ($\kappa = m$) & PI ($\alpha = 0.05$)\\
\midrule
     \underline{credit-approval}  &  690  & 6 & 36 & 9 & 14.13\% & 13.77\% & \textbf{\textit{13.04\%}}\dag\ddag & 14.13\% & 14.13\% & \textbf{\textit{13.04\%}}\dag\ddag & 14.13\% & 14.13\% & \textit{13.41\%} \\
      \underline{annealing}$\star$ & 798 & 6 & 46 & 32 & \textit{2.52\%} &  2.83\% & \textit{2.52\%} & 2.20\% & \textbf{\textit{1.89\%}}\dag\ddag & \textbf{\textit{1.89\%}}\dag\ddag &  2.52\% & \textit{2.20\%} & 12.58\% \\
      \underline{contraceptive}$\star$ & 1,473 & 2 & 15 & 7 & \textit{33.16\%} &  \textit{33.16\%} & \textit{33.16\%} & \textbf{\textit{32.82\%}}\dag & 32.99\% & \textbf{\textit{32.82\%}}\dag & \textit{33.16\%} &\textit{ 33.16\%} & 39.63\% \\
      \underline{hepatite} & 155 & 6 & 23 & 13 & \textbf{\textit{16.13\%}} & \textbf{\textit{16.13\%}} & 19.35\% & 19.35\% & 17.74\% & \textbf{\textit{16.13\%}} &  \textbf{\textit{16.13\%}} &  \textbf{\textit{16.13\%}} &  17.74\%\\
      \underline{cylinder-bands} & 539 & 19 & 43 & 15 & 23.36\% & \textbf{\textit{21.50\%}}\dag\ddag & \textbf{\textit{21.50\%}}\dag\ddag & 23.36\% & \textbf{\textit{21.50\%}}\dag\ddag & 22.43\% & \textit{22.43\%} & \textit{22.43\%} & 30.84\%\\
\bottomrule
\end{tabular}} \medskip
    \caption{\color{black} Classification errors of unregularized and Lasso-regularized variants of the classical logistic regression and our DR regression on UCI benchmark instances with mixed features. We use the same notation and highlighting conventions as in Table~\ref{tab:categorical_results}.}
    \label{tab:mixed_results} \vspace{-0.5cm}
}
\end{table}

\begin{table}[!h]
{\color{black}
    \centering
    \resizebox{1.2\columnwidth}{!}{%
    \hskip-4.0cm
     \begin{tabular}{lcccc|ccc|ccc|ccc}
\toprule
Data Set &  $N$ & $n$ & $k$ & $m$ & LR  &  DRO ($\kappa=1$) &  DRO ($\kappa=m$)& r-LR  & r-DRO ($\kappa=1$) & r-DRO ($\kappa=m$) & MT ($\kappa = 1$) & MT ($\kappa = m$) & PI ($\alpha = 0.05$)\\
\midrule
     \underline{credit-approval}  &  690  & 6 & 36 & 9 & 0.11 & 53.46 & 55.89 & 0.09 & 36.77 & 43.58 & 0.36 & 0.56 & 0.07 + 0.08 \\
      \underline{annealing}$\star$ & 798 & 6 & 46 & 32 & 0.18 & 62.96 & 71.09 & 43.58 & 22.91 & 0.24 &  0.36 & 0.58 & 0.10 + 0.12 \\
      \underline{contraceptive}$\star$ & 1,473 & 2 & 15 & 7 & 0.14 &  86.06 & 81.70 & 0.15 & 91.10 & 84.04 & 0.52 & 0.74 & 0.08 + 0.03\\
      \underline{hepatite} & 155 & 6 & 23 & 13 & 0.05 & 9.01 & 10.59 & 0.02 & 2.49 & 4.86 &  0.06 &  0.10 &  0.01 + 0.05\\
      \underline{cylinder-bands} & 539 & 19 & 43 & 15 & 0.13 & 96.10 & 106.48 & 0.09 & 67.09 &  41.63 & 0.32 & 0.59 & 0.06 + 0.12\\
\bottomrule
\end{tabular}} \medskip
    \caption{\color{black} Median runtimes (in seconds) associated with Table~\ref{tab:mixed_results}.}
    \label{tab:mixed_results_runtimes} \vspace{-0.5cm}
}
\end{table}

\section*{Appendix D: Further Details on Numerical Experiments}

Throughout the numerical experiments, we fixed $p = 1$ in the ground metric (\emph{cf.}~Definition~\ref{def:metric}).

\textbf{Synthetic data sets:} In Section~\ref{sec:num_res:runtimes} we generate synthetic data sets with $N$ data points and $m$ binary features. The data generation process is summarized in Algorithm~\ref{alg:synthetic}.

\begin{algorithm}[!htb]
    \begin{algorithmic}
        \caption{Construction of synthetic data sets in Section~\ref{sec:num_res:runtimes}.}
        \State Sample the components of $\beta_0$ and $\bbeta_{\text{C}}$ i.i.d.\@ from a standard normal distribution 
        \State Normalize $\beta_0$ and $\bbeta_{\text{C}}$ by dividing them with $\lVert \bm{\beta} \rVert_2$, where $\bm{\beta} = (\beta_0, \bbeta_{\text{C}})$
        \For{$i \in \{ 1, \ldots, N \}$}
            \State Construct $\bm{z}^i \in \mathbb{C}(2) \times \ldots \times \mathbb{C}(2)$ by sampling $\bm{z}^i \sim \{0, 1\}^m$ uniformly at random
            \State Find $p^i$, the probability of the $i$th data point having label $+1$, by using the `true' $\beta_0$ and $\bbeta_{\text{C}}$:
            $$ 
    p^i = \left[ 1 + \exp (- [\beta_0 + \bbeta_{\text{C}}{}^\top \bz^i]) \right]^{-1}$$
            \State Sample $y^i \in \{ -1, 1 \}$ from a Bernoulli distribution with parameter $p^i$
        \EndFor
        \State The synthetic data set is the collection of data points $\{ (\bz^i, y^i) \}_{i \in [N]}$ constructed above
        \label{alg:synthetic}
    \end{algorithmic}
\end{algorithm}

\textbf{UCI data sets:} The datasets of Sections~\ref{sec:num_res:cat_features} and
Appendix C were taken from the UCI repository \cite{UCI}. Missing values (NaNs) were encoded as a new category of the corresponding feature; the dataset \underline{breast-cancer} {\color{black} is an exception}, where rows with missing values were dropped. Some data sets include features that are derivatives of the labels, primary keys of the instances (\emph{e.g.}, the \underline{auidology} data set), or they have features with only one possible category (\emph{e.g.}, the \underline{annealing} and \underline{cylinder-bands} data sets). We removed such features manually. We also removed rows that were readily identified as erroneous (\emph{e.g.}, two rows in \underline{cylinder-bands} had less columns than required). Data sets with multi-class (\emph{i.e.}, non-binary) labels were converted to binary labels by distinguishing between the majority label class and all other classes. If a data set includes separate training and test sets, we merged them and subsequently applied our training set-test set split as described in the main paper. As per standard practice, we randomly permuted the rows of each data set before conducting the splits into training and test sets. For the detailed data processing steps, we refer to the GitHub repository accompanying this paper, which also contains a sample Python 3 script.\footnote{\url{https://github.com/selvi-aras/WassersteinLR}}

\textbf{Implementation of the column-and-constraint generation scheme:} In our implementation, we switch between solving the primal and dual exponential conic reformulation of the DR logistic regression problem, as we found that sometimes the dual problem can be solved more easily than the primal. We have further implemented an `easing' step in the column-and-constraint generation scheme that periodically deletes constraints from $\mathcal{W}^+$ and $\mathcal{W}^-$ whose slacks exceed a pre-specified threshold. To this end, note that the constraints $u^+_{i, \bz} + v^+_{i,\bm{z}} \leq 1$, $(i, \bm{z}) \in \mathcal{W}^+$, in the relaxations of the exponential conic reformulation~\eqref{eq:cone_head_reformulation} have a slack ranging between $0$ and $1$ by construction. We implemented variants of our column-and-constraint generation scheme that conduct easing steps either every $200$ iterations or in iteration $t = 100 \cdot 1.5^k$, $k \in \mathbb{N}$. We have also implemented variants that keep the slack threshold constant at $0.05$ and where this threshold starts at $0.02$ and is subsequently increased by $0.02$ in each easing step. In all of our variants, a constraint is deleted at most once, that is, it is no longer considered for deletion if it has been reintroduced after a prior deletion. This ensures that Algorithm~\ref{alg:cac_gen} terminates in finite time without cycling. Analogous steps have been implemented for the constraints $u^-_{i, \bz} + v^-_{i,\bm{z}} \leq 1$, $(i, \bm{z}) \in \mathcal{W}^-$.


{\color{black}
\textbf{Determining statistical Significance:} In the numerical experiments of Section \ref{sec:num_res:cat_features} we compare the means of $100$ out-of-sample errors attained by several logistic regression methods and also identify which methods appeared to be statistically significant. In Table~\ref{tab:categorical_results}, for example, a dagger (\dag) symbol next to the winning approach denotes statistically significant error improvement over the standard logistic regression (`LR') and a double dagger (\ddag) symbol denotes such improvement over the second best approach. Note that if the winning approach is a variant of the DRO methods we propose, then the second best approach is taken over the methods excluding our methods for a fair comparison. 

We add a dagger (\dag) to the winning approach according to the following approach. Firstly, we  subtract (element-wise) the vector of errors attained by standard logistic regression from the vector of errors attained by the winning approach. Each element of the new vector is the `additional error' the winning approach has compared to the standard logistic regression. We then try to reject the hypothesis (at $5\%$-significance level) that this additional error is non-negative (\textit{i.e.}, we try to reject the hypothesis that the standard logistic regression is at least as good). To this end, we compute the t-statistic for the mean of the additional errors vector with a hypothesis mean of $0$ and sample size of $100$. We then compute the cumulative probability of this value via a one-sided t-test (with $100 - 1$ degrees of freedom) to obtain a p-value. If this value is less than $0.05$, then we reject the hypothesis, concluding that the improvement is significant. Note that, with this approach we implicitly assume the out-of-sample errors are independent, which is typically not the case \cite{salzberg1997comparing}. Hence, we acknowledge that these tests of significance are only approximate. The presence of a double dagger is determined analogously.
}

\textbf{Computing environment:} We implemented all algorithms in Julia using MOSEK's exponential cone solver as well as JuMP to interact with the solver. We used the high performance computing cluster of Imperial College London, which runs a Linux operating system as well as Portable Batch System (PBS) for scheduling the jobs. We ran our experiments as batch jobs on Intel Xeon 2.66GHz processors with 8GB memory in single-core and single-thread mode. The job descriptions as well as all PBS commands are included in the GitHub repository.

\blue{\textbf{Runtimes corresponding to Table~\ref{tab:categorical_results}:} Table~\ref{tab_runtime:categorical_results} presents the mean runtimes of each method on every dataset for the experiments corresponding to Table~\ref{tab:categorical_results}. Here, the times reported for `LR' and `MT' are the times the solver took to solve the corresponding optimization problems. The times corresponding to `DRO' methods we propose are the total solver times summed for each sub-problem solved during the column-and-constraint generation scheme, including the identification of the most violated constraints. The times corresponding to `PI' display the solution time to solve the regularized logistic regression problems plus the time it takes to identify the regularization parameter as proposed in \cite{blanchet_kang_murthy_2019}. The columns of Table~\ref{tab_runtime:categorical_results} are identical to those in Table~\ref{tab:categorical_results}.}
\begin{table}[tb]
{\color{black}
    \centering
    \resizebox{1.2\columnwidth}{!}{%
    \hskip-4.0cm
     \begin{tabular}{lccc|ccc|ccc|ccc}
\toprule
Data Set &  $N$ & $k$ & $m$ & LR  &  DRO ($\kappa=1$) &  DRO ($\kappa=m$)& r-LR  & r-DRO ($\kappa=1$) & r-DRO ($\kappa=m$) & MT ($\kappa = 1$) & MT ($\kappa = m$) & PI ($\alpha = 0.05$)\\
\midrule
     \underline{breast-cancer}  &  277  &  42 & 9 & 0.06 & 218.61 & 214.83 & 0.07 & 15.19 & 20.52 & 0.08 & 0.09 & 0.02 + 0.07 \\
     \underline{spect}  &267& 22 & 22 & 0.08 & 32.05 & 38.95 & 0.07 & 20.81 & 35.50 & 0.07 & 0.09 & 0.02 + 0.03 \\
     \underline{monks-3}  &554 & 11 & 6 & 0.12 & 46.83 & 42.59& 0.08 & 31.04 & 42.53 & 0.12 & 0.14 & 0.03 + 0.02 \\
      \underline{tic-tac-toe}  & 958 & 18 &  9 & 0.20 & 71.26 & 69.80 & 0.13 & 61.88 & 64.33 & 0.23 & 0.27 & 0.06 + 0.04 \\
      \underline{kr-vs-kp}  & 3,196 & 37 & 36 & 1.65 & 217.98 & 40.64 & 0.71 & 21.79 & 11.29 & 2.11 & 2.87 & 0.34 + 0.08\\
      \underline{balance-scale}$\star$ & 625 & 16 & 4 & 0.15 & 13.60 & 17.41 & 0.19 & 19.74 & 24.04 & 0.21 & 0.19 & 0.03 + 0.03 \\
      \underline{hayes-roth}$\star$  & 160 & 11 & 4 & 0.06 & 3.67 & 3.92 & 0.02 & 1.12 & 0.85 & 0.06 & 0.06 & 0.01 + 0.02\\
      \underline{lymphography}$\star$  & 148 & 42 & 18 & 0.09 & 145.46 & 154.69 & 0.03 & 4.83 & 11.91 & 0.08 & 0.09 & 0.01 + 0.07 \\
      \underline{car}$\star$  & 1,728 & 15 &  6 & 0.46 & 168.09 & 112.33 & 0.28 & 210.35 & 181.63 & 0.57 & 0.52 & 0.11 + 0.05 \\
      \underline{splices}$\star$ & 3,189 & 229 & 60 & 4.25 & 4,246.36 & 4,393.97  & 1.48 & 1,169.80 & 453.73 & 6.13 & 10.21 & 1.55 + 1.77 \\
      \underline{house-votes-84}  & 435  & 32 & 16 & 0.18 & 88.56 & 105.66 & 0.06 & 21.6 & 20.71 & 0.12 & 0.16 & 0.03 + 0.08\\
      \underline{hiv}  & 6,590 & 152 & 8 & 8.44 & 2,874.62 & 1,598.83 & 2.20 & 2,119.26 & 768.01 & 10.94 & 12.93 & 2.18 + 1.14 \\
      \underline{primacy-tumor}$\star$  & 339 & 25 & 17 & 0.22 &61.75&57.81& 0.06 & 11.93 & 13.21 & 0.08 & 0.12 & 0.02 + 0.04\\
      \underline{audiology}$\star$ & 226 & 92 & 69 & 0.05 & 18.52 & 16.10 & 0.04 & 9.19 & 5.28 & 0.11 & 0.13 & 0.04 + 0.16 \\
\bottomrule
\end{tabular}} \medskip
    \caption{\color{black} Mean runtimes (in seconds) associated with Table~\ref{tab:categorical_results}.}
    \label{tab_runtime:categorical_results} \vspace{-0.5cm}
}
\end{table}

\textbf{Error bars of the experiments:} Figures~\ref{fig:errors} and~\ref{fig:mixed_errors} report the error distributions corresponding to the experiments of Tables~\ref{tab:categorical_results} and~\ref{tab:mixed_results}, respectively. In these figures, each of the six sub-plots reports the errors of a specific model (\emph{e.g.}, regularized DRO with $\kappa = 1$). In each sub-plot, the horizontal axis lists the considered data sets, and the vertical axis visualizes the {\color{black} 100} test errors in box-and-whisker representation (where the boxes enclose the $25\%$ and $75\%$ quartiles).

\begin{figure}[!htpb]
    \centering
    \includegraphics[width = 0.48\textwidth]{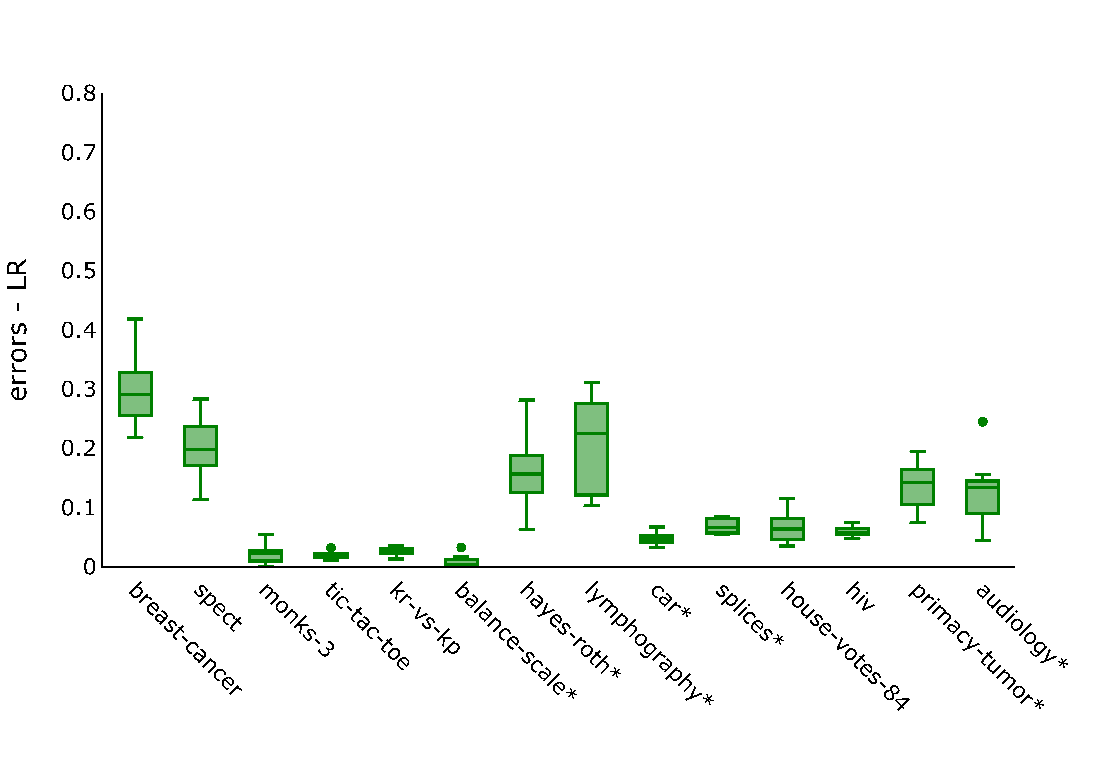} 
    \includegraphics[width = 0.48\textwidth]{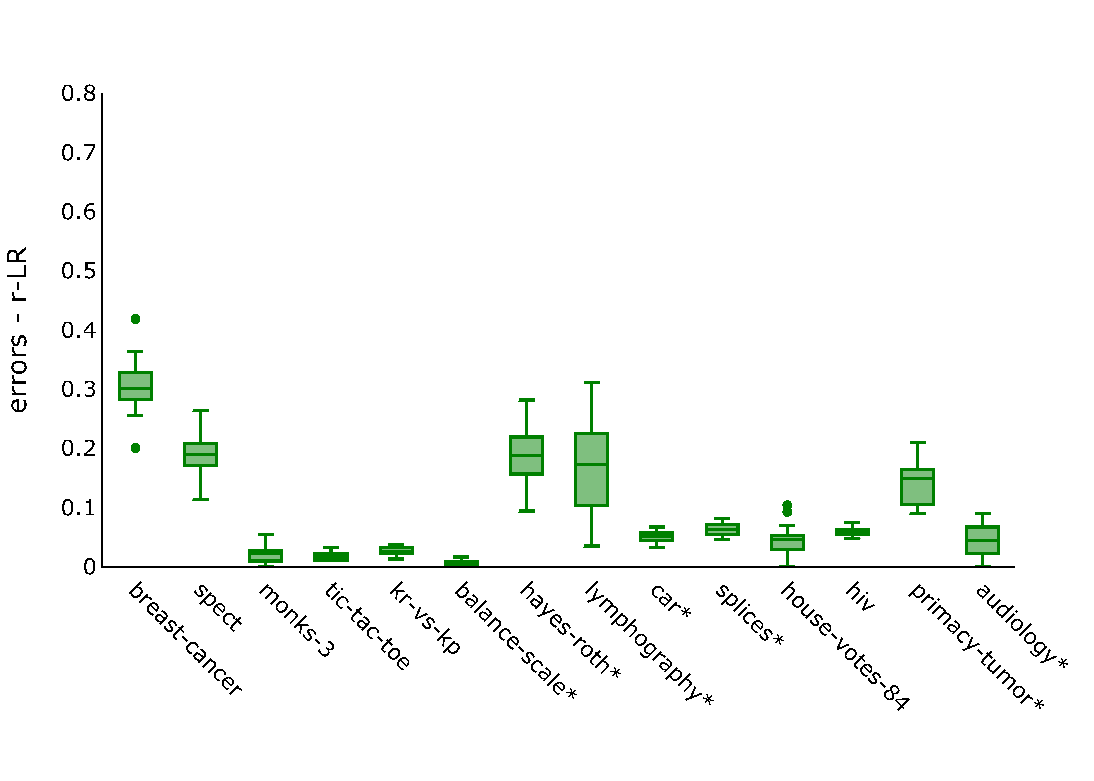}
    \\    \includegraphics[width = 0.48\textwidth]{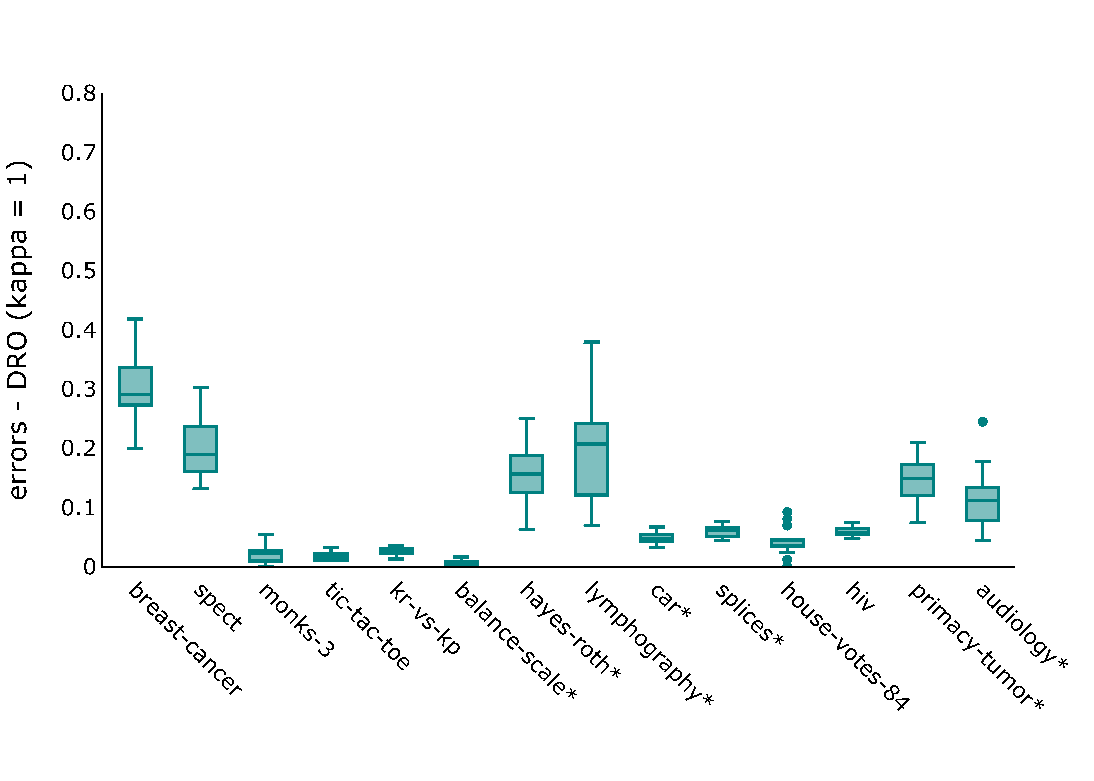}
    \includegraphics[width = 0.48\textwidth]{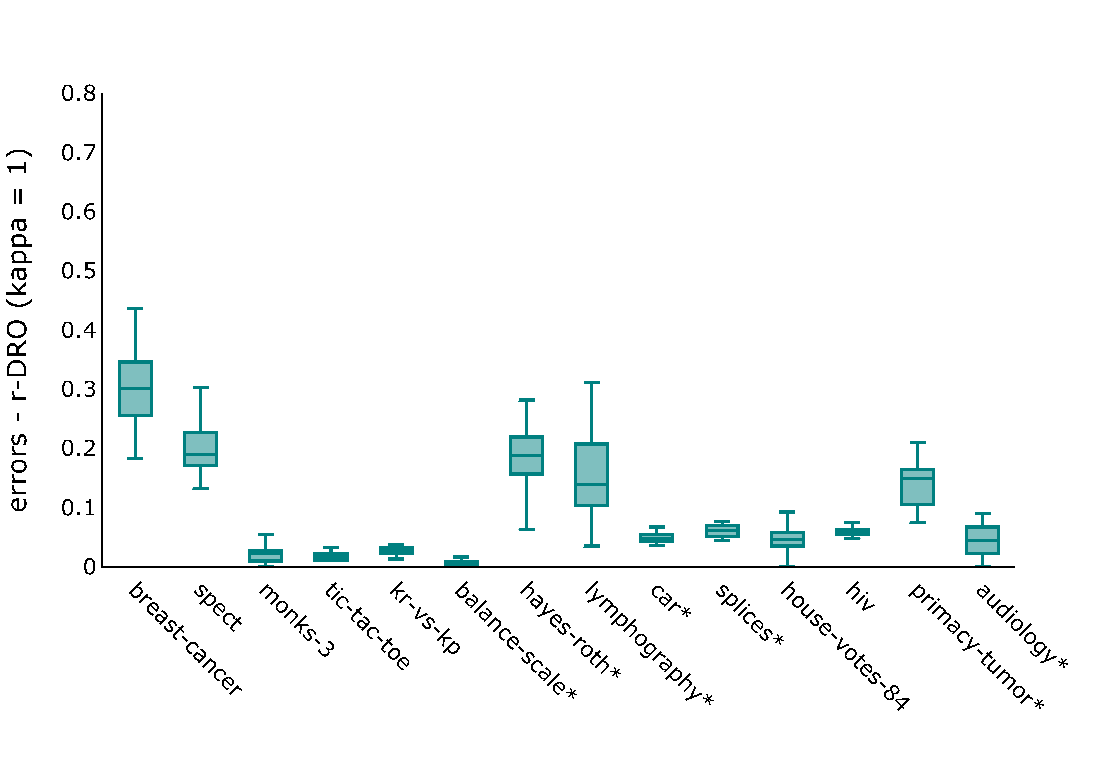}
    \\ 
    \includegraphics[width = 0.48\textwidth]{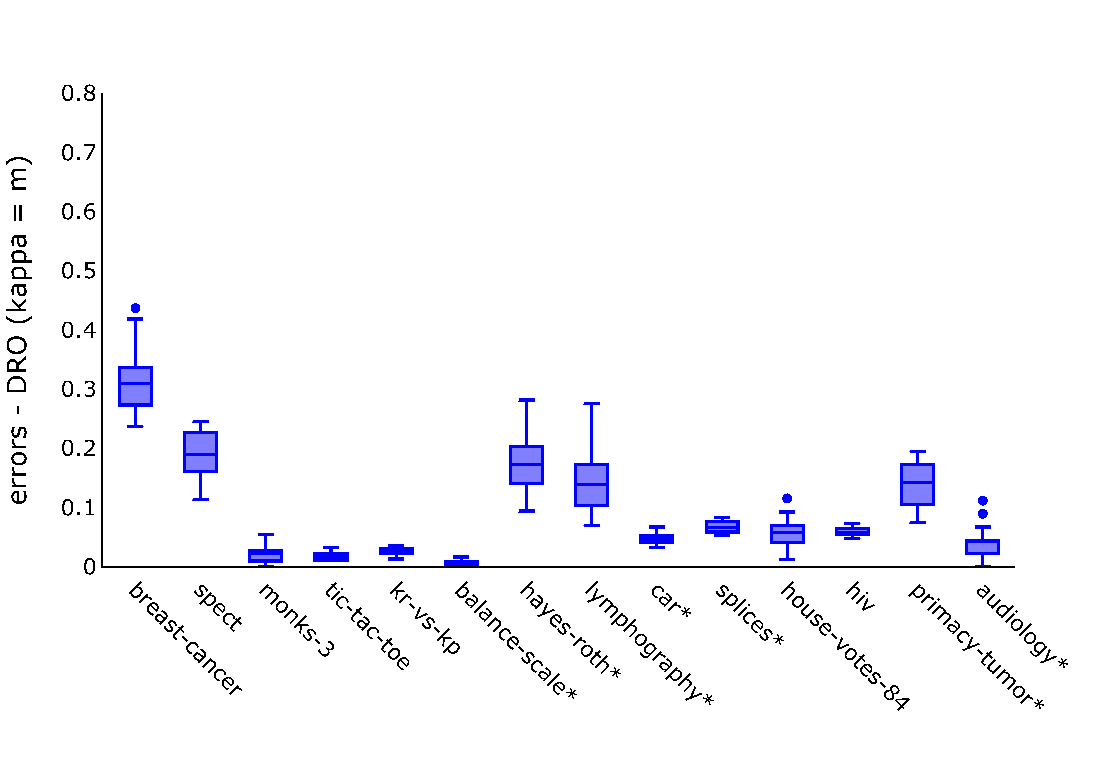}
      \includegraphics[width = 0.48\textwidth]{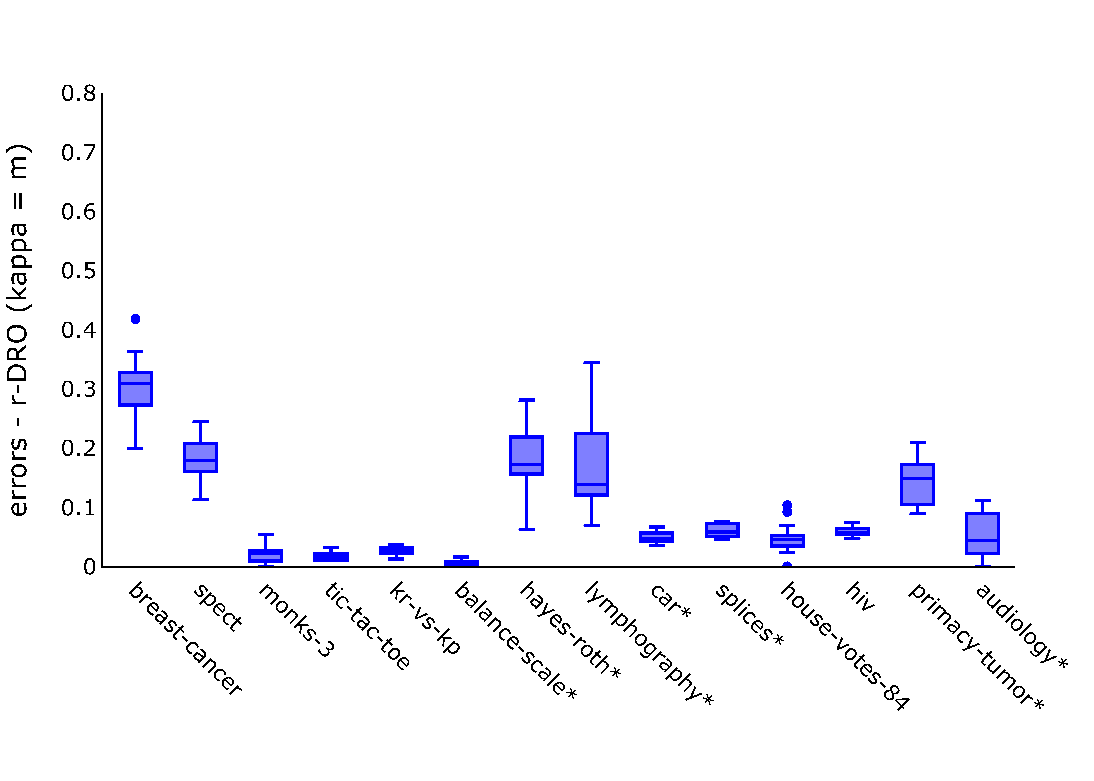}
    \caption{Error bars for the unregularized (left column) and regularized (right column) methods on the considered UCI datasets with categorical features.}
    \label{fig:errors}
\end{figure}

\begin{figure}[!htpb]
    \centering
    \includegraphics[width = 0.48\textwidth]{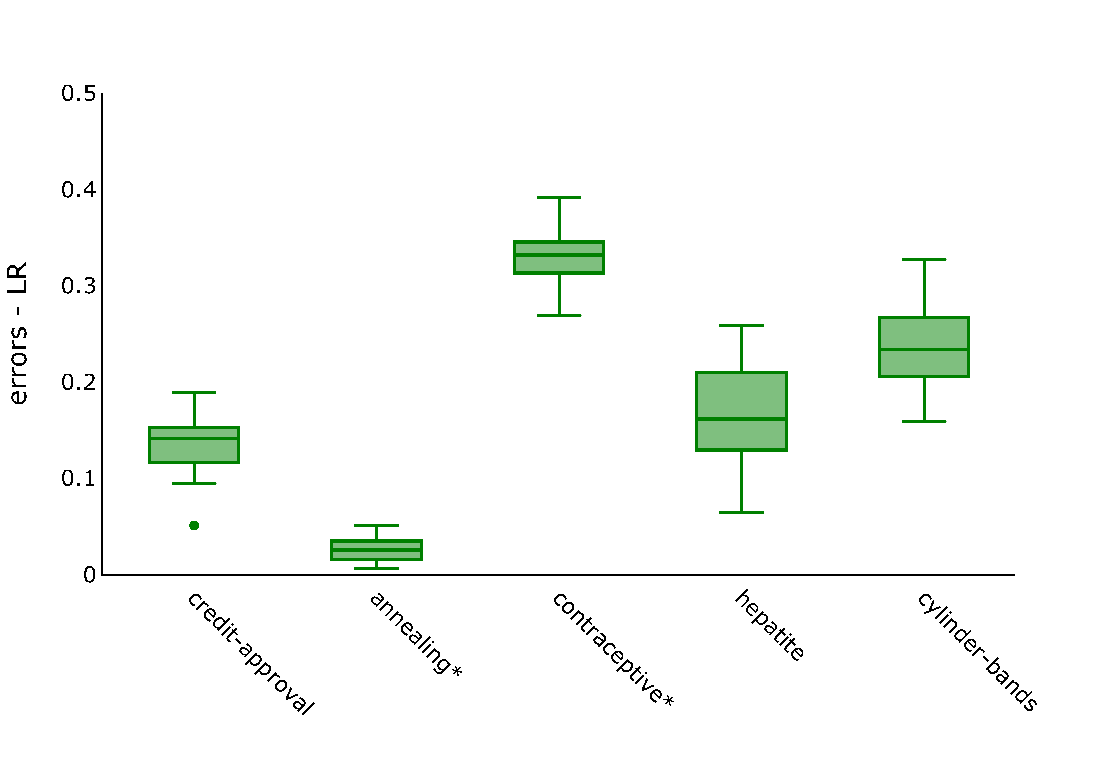} 
    \includegraphics[width = 0.48\textwidth]{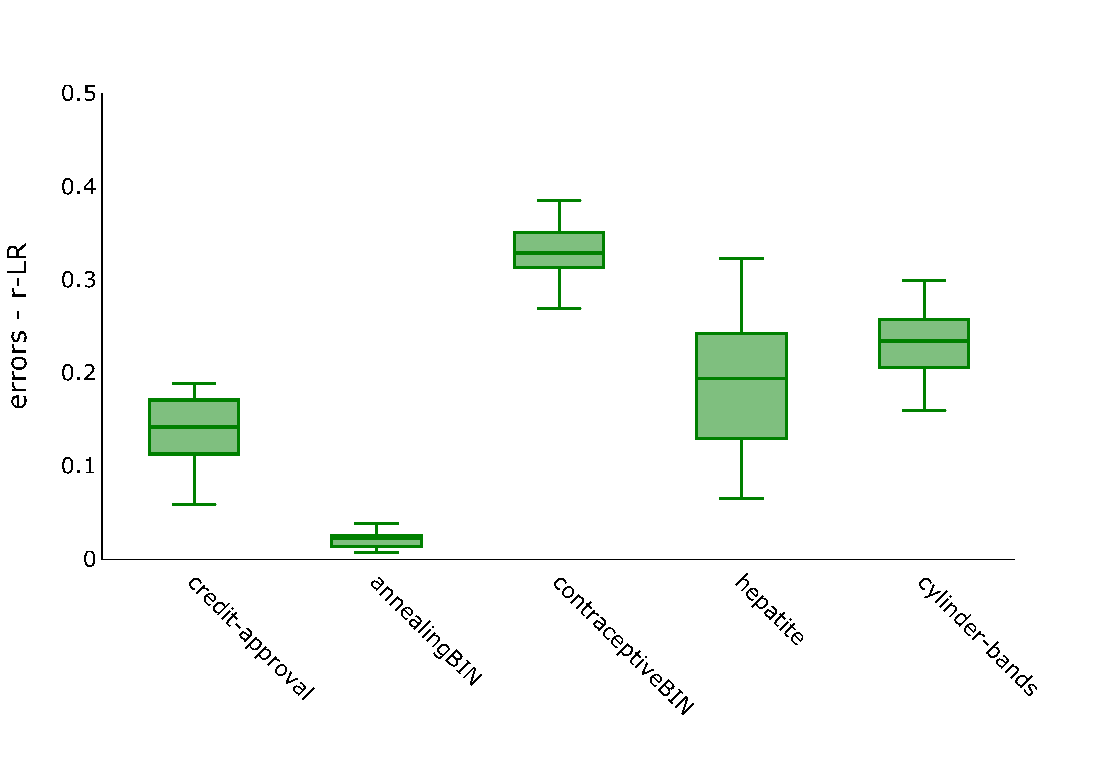}
    \\    \includegraphics[width = 0.48\textwidth]{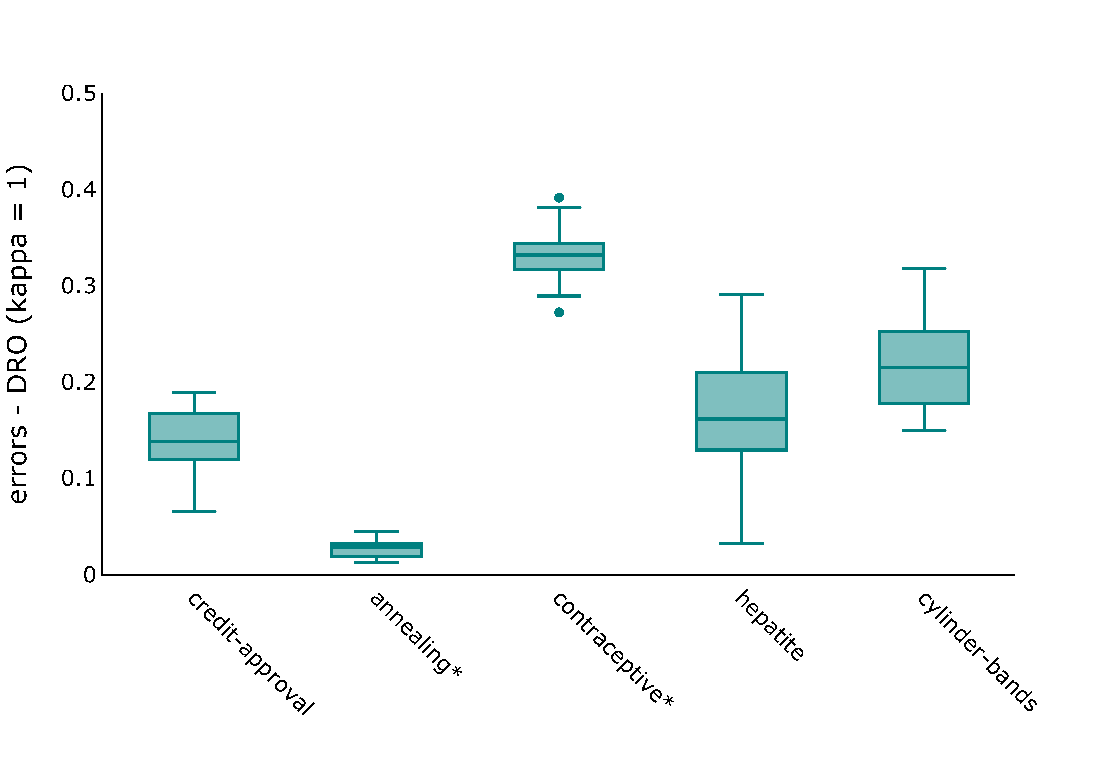}
    \includegraphics[width = 0.48\textwidth]{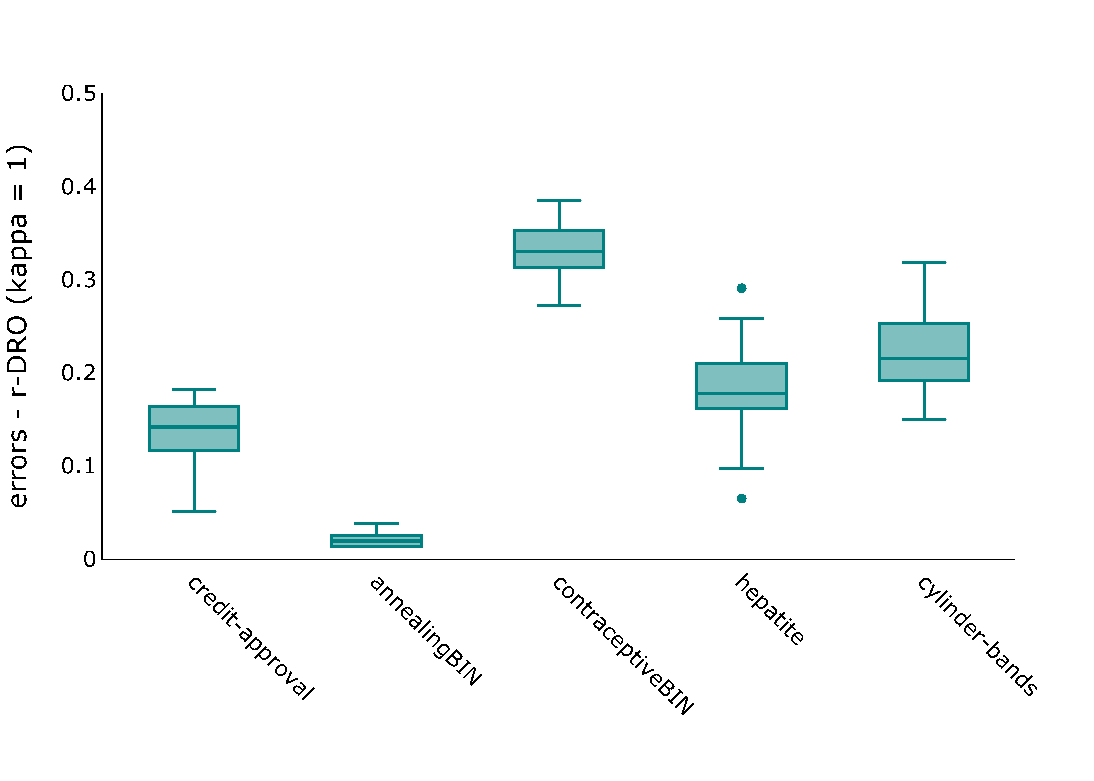}
    \\ 
    \includegraphics[width = 0.48\textwidth]{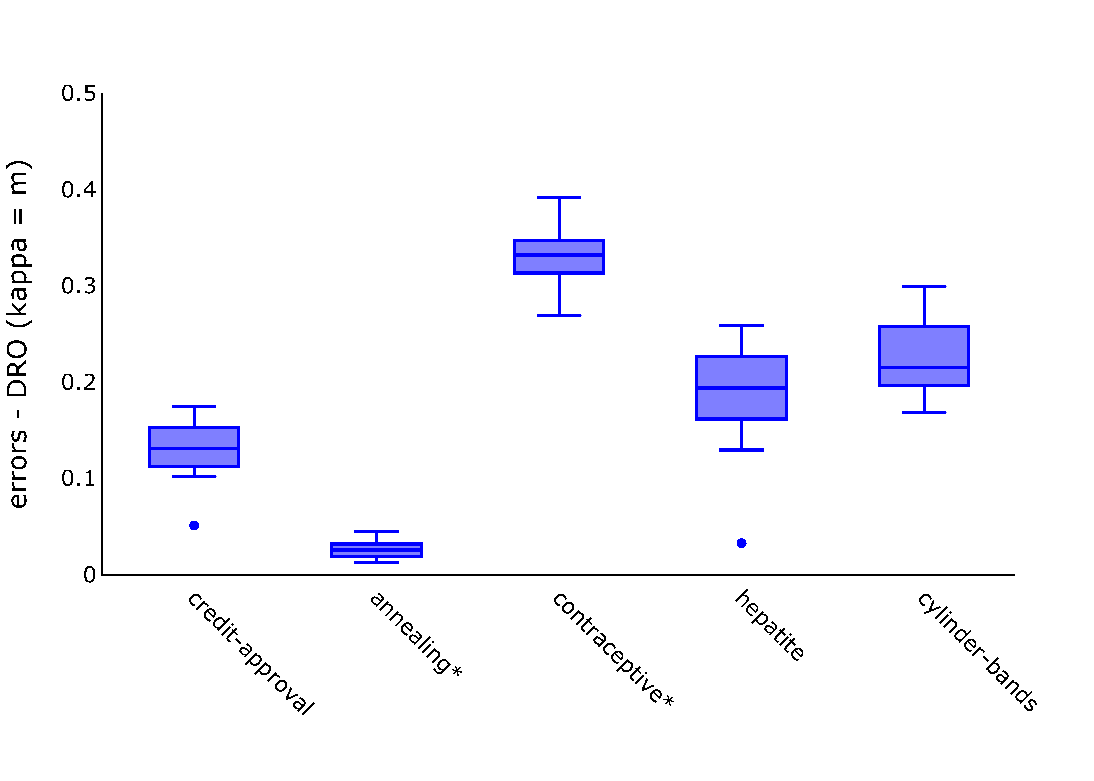}
      \includegraphics[width = 0.48\textwidth]{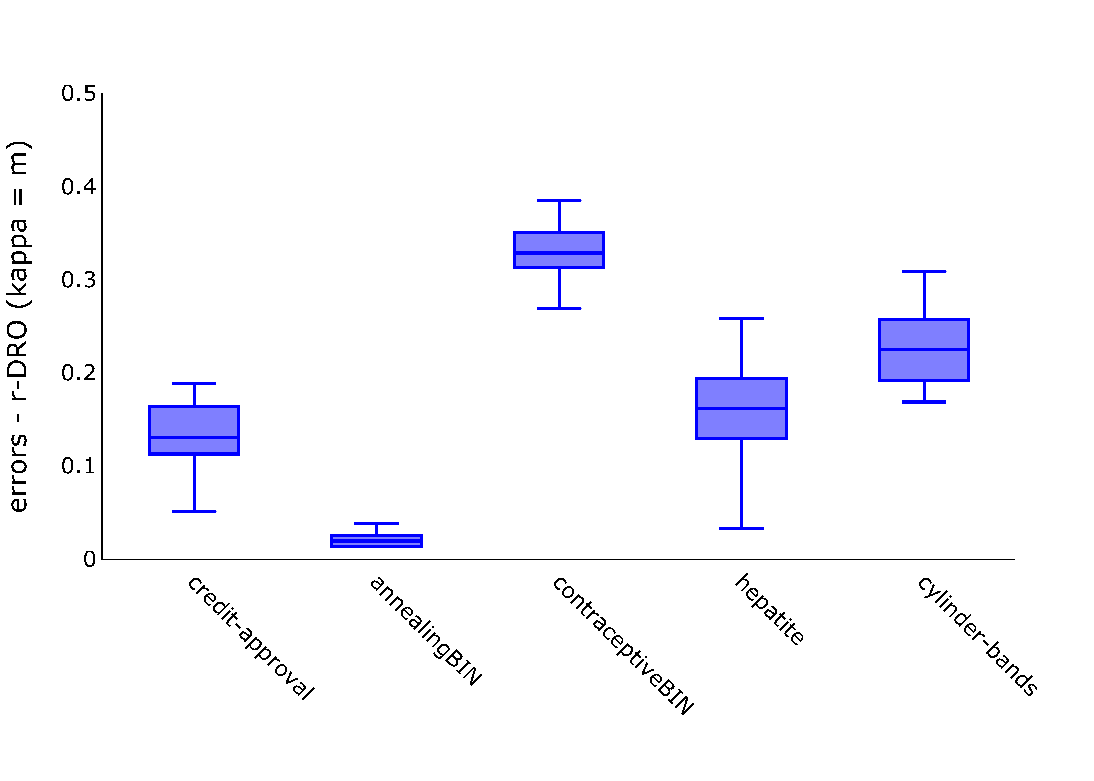}
    \caption{Error bars for the unregularized (left column) and regularized (right column) methods on the considered UCI datasets with mixed features.}
    \label{fig:mixed_errors}
\end{figure}

\clearpage
\newpage

\section*{Appendix E: Proofs}

Our proof of Theorem~\ref{thm:coneheads} relies on the following result from the literature, which we state first.

\begin{lem}\label{lem:ConvexLemma}
    Consider the convex function $h_{\bgamma}(\bw):= \log \left(1+ \exp(-\bgamma^\top \bw +v) \right)$ for $\bgamma, \, \bw \in \mR^n$ and $v \in \mR$. Then
    \[
    \sup_{\bw \in \mR^n} \, h_{\bgamma}(\bw) - \lambda ||\widehat{\bw} - \bw || = \begin{cases}
			h_{\bgamma}(\widehat{\bw}) & \text{\emph{if} } \ ||\bgamma||_{*} \leq \lambda, \\
             + \infty & \mbox{\emph{otherwise}}
		 \end{cases}
    \]
    for every $\lambda >0$, where $||\cdot||_{*}$ is the dual norm of $|| \cdot ||$, that is, $||\bgamma||_{*} := \sup_{||\bw || \leq 1} \bgamma^\top \bw$.
\end{lem}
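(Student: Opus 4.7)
My plan is to reduce the lemma to a Lipschitz-constant analysis of $h_{\bgamma}$ and then treat the two regimes on either side of the threshold $\|\bgamma\|_* \leq \lambda$ by direct calculation. The first step is to differentiate $h_{\bgamma}$, giving $\nabla h_{\bgamma}(\bw) = -\bgamma \cdot \sigma(-\bgamma^\top\bw + v)$, where $\sigma(t) = e^t/(1+e^t) \in (0,1)$ is the sigmoid. This yields the pointwise bound $\|\nabla h_{\bgamma}(\bw)\|_* \leq \|\bgamma\|_*$, and integrating along any line segment produces the global Lipschitz inequality $h_{\bgamma}(\bw) - h_{\bgamma}(\widehat{\bw}) \leq \|\bgamma\|_* \cdot \|\bw - \widehat{\bw}\|$ for all $\bw, \widehat{\bw} \in \mR^n$, where $\|\cdot\|_*$ appears on the gradient side precisely because $\|\cdot\|_*$ and $\|\cdot\|$ are in duality.

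For the regime $\|\bgamma\|_* \leq \lambda$, the Lipschitz bound absorbs the penalty: $h_{\bgamma}(\bw) - \lambda\|\widehat{\bw} - \bw\| \leq h_{\bgamma}(\widehat{\bw}) + (\|\bgamma\|_* - \lambda)\|\bw - \widehat{\bw}\| \leq h_{\bgamma}(\widehat{\bw})$. Since the choice $\bw = \widehat{\bw}$ attains equality, the supremum equals $h_{\bgamma}(\widehat{\bw})$ as claimed.

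For the regime $\|\bgamma\|_* > \lambda$, I would exhibit an explicit unbounding sequence. By definition of the dual norm, there exists $\bm{u}$ with $\|\bm{u}\| \leq 1$ and $\bgamma^\top \bm{u} > \lambda$; setting $\bw_t = \widehat{\bw} - t\bm{u}$ for $t > 0$ and using the elementary estimate $\log(1+e^x) \geq x$ yields $h_{\bgamma}(\bw_t) - \lambda\|\widehat{\bw} - \bw_t\| \geq v - \bgamma^\top\widehat{\bw} + t(\bgamma^\top\bm{u} - \lambda)$, which diverges to $+\infty$ as $t \to \infty$. The only genuine subtlety I see is the sign of the ``otherwise'' value: this construction forces the supremum to $+\infty$, not $-\infty$, so the statement appears to contain a typographical sign flip. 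This discrepancy is immaterial for the lemma's intended use inside the proof of Theorem~\ref{thm:coneheads}, where the formula is invoked within an infimum over $\lambda$ and its sole role is to impose the dual feasibility constraint $\|\bgamma\|_* \leq \lambda$; either convention rules out the infeasible branch.
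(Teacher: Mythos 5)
Your proof is correct, but it takes a genuinely different route from the paper's, which does not prove the lemma at all: the paper simply defers to Lemma~47 of \cite{JMLR:v20:17-633}. Your argument---computing $\nabla h_{\bgamma}(\bw) = -\bgamma\,\sigma(-\bgamma^\top\bw+v)$ so that $\lVert \nabla h_{\bgamma}(\bw)\rVert_* \leq \lVert\bgamma\rVert_*$, integrating along segments to obtain the global Lipschitz bound, and then exhibiting the divergent ray $\bw_t = \widehat{\bw}-t\bm{u}$ with $\bgamma^\top\bm{u}>\lambda$ together with $\log(1+e^x)\geq x$---is a clean, self-contained substitute: the citation buys brevity, your version buys independence from the external reference and makes visible exactly where the dual norm enters. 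You are also right that the ``otherwise'' value in the statement is a typographical error: since $\bw=\widehat{\bw}$ already attains the finite value $h_{\bgamma}(\widehat{\bw})$, the supremum can never equal $-\infty$, and your ray shows it equals $+\infty$ whenever $\lVert\bgamma\rVert_*>\lambda$, which matches the cited Lemma~47.

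One caveat on your closing remark: it is not true that ``either convention rules out the infeasible branch.'' The lemma is invoked inside constraints of the form $\sup_{\bx}\{\cdots\}\leq s_i$ with $s_i$ finite, so the value $+\infty$ renders these constraints unsatisfiable when $\lVert\bbeta_{\text{N}}\rVert_*>\lambda$ and is precisely what licenses appending $\lVert\bbeta_{\text{N}}\rVert_*\leq\lambda$ to the reformulation in Theorem~\ref{thm:coneheads}; the value $-\infty$ would instead make those constraints vacuous and the dual-norm constraint would not follow. The sign is therefore not immaterial---the $+\infty$ version you actually proved is the one the paper needs.
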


\begin{proof}
    The statement immediately follows from Lemma~47 of \cite{JMLR:v20:17-633}.
\end{proof}

Our proof of Theorem~\ref{thm:coneheads} follows \cite{NIPS2015} with two main changes: \emph{(i)} we handle the categorical features in $\bz$, and \emph{(ii)} we reformulate the optimization problem as an exponential conic program. Despite the similarities to the existing result, we include the entire proof to keep the paper self-contained.

{\bf Proof of Theorem \ref{thm:coneheads}:} Recall that $\bxi = (\bm{x}, \bm{z}, y) \in \Xi = \mathbb{R}^n \times \mathbb{C} \times \{ -1, +1 \}$. To simplify our notation, we will use $\bxi$ and $(\bx,\bz,y)$ interchangeably. The inner problem in~\eqref{eq:the_mother_of_all_problems} can be written as
\begin{equation*}
    \left[
    \begin{array}{l@{\quad}l}
        \displaystyle \mathop{\text{maximize}}_{\mathbb{Q}} & \displaystyle \mathbb{E}_\mathbb{Q} \left[ l_{\bm{\beta}} (\bm{x}, \bm{z}, y) \right] \\
        \displaystyle \text{subject to} & \displaystyle \mathbb{Q} \in \mathfrak{B}_\epsilon (\widehat{\mathbb{P}}_N)
    \end{array}
    \right]
    \quad = \quad
    \left[
    \begin{array}{l@{\quad}l}
        \displaystyle \mathop{\text{maximize}}_{\mathbb{Q}} & \displaystyle \int_{\bxi \in \Xi} l_{\bbeta}(\bxi) \, \mQ( \diff \bxi ) \\
        \displaystyle \text{subject to} & \displaystyle \mathbb{Q} \in \mathfrak{B}_\epsilon (\widehat{\mathbb{P}}_N)
    \end{array}
    \right],
\end{equation*}
and replacing $\mathfrak{B}_\epsilon (\widehat{\mathbb{P}}_N)$ with its definition (\emph{cf.}~Definition~\ref{def:wasserstein}) yields
\begin{equation*}
    \begin{array}{l@{\quad}l@{\qquad}l}
        \displaystyle \mathop{\text{maximize}}_{\mathbb{Q}, \Pi} & \displaystyle \int_{\bxi \in \Xi} l_{\bbeta}(\bxi) \, \mQ( \diff \bxi ) \\[5mm]
        \displaystyle \text{subject to} & \displaystyle \int_{(\bxi, \bxi') \in \Xi^2} d(\bxi, \bxi') \, \Pi(\diff \bxi, \diff \bxi') \leq \epsilon \\[5mm]
        & \displaystyle \int_{\bxi \in \Xi} \Pi(\diff \bxi, \diff \bxi') =  \widehat{\mathbb{P}}_N(\diff \bxi') & \forall \bxi' \in \Xi \\[5mm]
        & \displaystyle \int_{\bxi' \in \Xi} \Pi(\diff \bxi, \diff \bxi') =  {\mathbb{Q}}(\diff \bxi) & \forall \bxi \in \Xi \\[5mm]
        & \displaystyle \mathbb{Q} \in \mathcal{P}_0 (\Xi), \;\; \Pi \in \mathcal{P}_0 (\Xi^2).
    \end{array}
\end{equation*}
We can substitute $\mathbb{Q}$ using the second equality constraint to obtain
\begin{equation*}
    \begin{array}{l@{\quad}l@{\qquad}l}
        \displaystyle \mathop{\text{maximize}}_{\Pi} & \displaystyle \int_{\bxi \in \Xi} l_{\bbeta}(\bxi) \int_{\bxi' \in \Xi} \Pi(\diff \bxi, \diff \bxi') \\[5mm]
        \displaystyle \text{subject to} & \displaystyle \int_{(\bxi, \bxi') \in \Xi^2} d(\bxi, \bxi') \, \Pi(\diff \bxi, \diff \bxi') \leq \epsilon \\[5mm]
        & \displaystyle \int_{\bxi \in \Xi} \Pi(\diff \bxi, \diff \bxi') =  \widehat{\mathbb{P}}_N(\diff \bxi') & \forall \bxi' \in \Xi \\[5mm]
        & \displaystyle \Pi \in \mathcal{P}_0 (\Xi^2).
    \end{array}
\end{equation*}
Denoting by $\mQ^i(\diff \bxi) := \Pi(\diff \bxi | \bxi^i)$ the conditional distribution of $\Pi$ upon the realization of $\bxi' = \bxi^i$ and exploiting the fact that $\widehat{\mathbb{P}}_N$ is a discrete distribution supported on the $N$ atoms $\{\bxi^i\}_{i \in [N]}$, we can use the marginalized representation $ \Pi(\diff \bxi,\diff \bxi') =\frac{1}{N} \sum_{i=1}^N \delta_{\bxi^i}(\diff \bxi')\mQ^i(\diff \bxi)$ to obtain the equivalent reformulation
\begin{equation*}
    \begin{array}{l@{\quad}l@{\qquad}l}
        \displaystyle \mathop{\text{maximize}}_{\mQ^i} & \displaystyle \dfrac{1}{N} \sum_{i=1}^N \int_{\bxi \in \Xi} l_{\bbeta}(\bxi) \, \mQ^i (\diff \bxi) \\[5mm]
        \displaystyle \text{subject to} & \displaystyle \dfrac{1}{N} \sum_{i=1}^N \int_{\bxi \in \Xi} d(\bxi, \bxi^i) \, \mQ^i (\diff \bxi) \leq \epsilon \\[5mm]
        & \displaystyle \mQ^i \in \mathcal{P}_0(\Xi), \, i \in [N].
    \end{array}
\end{equation*}
We can now decompose the $N$ variables (distributions) $\mQ^i$ into $2N \cdot \prod_j k_j$ measures $\mQ^i_{\bz, y}$:
\begin{equation*}
    \begin{array}{l@{\quad}l@{\qquad}l}
        \displaystyle \mathop{\text{maximize}}_{\mQ^i_{\bz, y}} & \displaystyle \dfrac{1}{N} \sum_{i=1}^N \sum_{(\bz, y) \in \mathbb{C} \times \{ -1, +1\}} \int_{\bx \in \mathbb{R}^n} l_{\bbeta}(\bxi) \, \mQ^i_{\bz, y} (\diff \bx) \\[5mm]
        \displaystyle \text{subject to} & \displaystyle \dfrac{1}{N} \sum_{i=1}^N \sum_{(\bz, y) \in \mathbb{C} \times \{ -1, +1\}} \int_{\bx \in \mathbb{R}^n} d(\bxi, \bxi^i) \, \mQ^i_{\bz, y}(\diff \bx) \leq \epsilon \\[5mm]
        & \displaystyle \sum_{(\bz, y) \in \mathbb{C} \times \{ -1, +1\}} \int_{\bx \in \mathbb{R}^n}  \mQ^i_{\bz, y}(\diff \bx) = 1 & \forall i \in [N] \\ 
        & \displaystyle \mQ^i_{\bz, y} \in \mathcal{M}_+ (\mathbb{R}^n), \, i \in [N] \text{ and } (\bz, y) \in \mathbb{C} \times \{ -1, +1\},
    \end{array}
\end{equation*}
where $\mathcal{M}_+ (\mathbb{R}^n)$ denotes the space of non-negative measures supported on $\mathbb{R}^n$. This infinite-dimensional linear program admits the dual
\begin{equation*}
    \begin{array}{l@{\quad}l@{\qquad}l}
        \displaystyle \mathop{\text{minimize}}_{\lambda, \bm{u}} & \displaystyle \lambda \epsilon + \sum_{i=1}^N u_i \\[5mm]
        \displaystyle \text{subject to} & \displaystyle \dfrac{1}{N} \cdot \underset{\bx \in \mathbb{R}^n}{\sup} \left\{ l_{\bbeta}(\bx, \bz, y) - \lambda d((\bx, \bz, y), \bxi^i) \right\} \leq u_i & \displaystyle \forall i \in [N] \\
        & & \displaystyle \forall (\bz, y) \in \mathbb{C} \times \{-1, +1\} \\
        & \displaystyle \lambda \geq 0, \;\; \bm{u} \in \mathbb{R}^N.
    \end{array}
\end{equation*}
Strong duality holds for any $\epsilon > 0$ due to Proposition~3.4 of \cite{Shapiro:2001}. We rewrite the dual problem further by substituting $s_i := Nu_i$, using Definition~\ref{def:metric} to write out $d$ explicitly, as well as breaking down the dual constraints into separate cases for $y = -1$ and $y = +1$:
\begin{equation*}
    \mspace{-35mu}
    \begin{array}{l@{\quad}l@{\qquad}l}
        \displaystyle \mathop{\text{minimize}}_{\lambda, \bm{s}} & \displaystyle \lambda \epsilon + \dfrac{1}{N}\sum_{i=1}^N s_i \\[5mm]
        \displaystyle \text{subject to} & \displaystyle \underset{\bx \in \mathbb{R}^n}{\sup} \{ l_{\bbeta}(\bx, \bz, +1) - \lambda \lVert \bx-\bx^i \rVert\} - \lambda \kappa \cdot \indicate{y^i \neq 1} - \lambda d_{\text{C}} (\bz,\bz^i)  \leq s_i &  \forall i \in [N],\; \bz \in \mathbb{C} \\
        & \displaystyle \underset{\bx \in \mathbb{R}^n}{\sup} \{ l_{\bbeta}(\bx, \bz, -1) - \lambda \lVert \bx-\bx^i \rVert\} - \lambda \kappa \cdot \indicate{y^i \neq -1} - \lambda d_{\text{C}} (\bz,\bz^i)  \leq s_i &  \forall i \in [N],\; \bz \in \mathbb{C} \\
        & \displaystyle \lambda \geq 0, \;\; \bm{s} \in \mathbb{R}^N
    \end{array}
\end{equation*}
Applying Lemma~\ref{lem:ConvexLemma} to the suprema of the above problem results in the reformulation
\begin{equation*}
    \begin{array}{l@{\quad}l@{\qquad}l}
        \displaystyle \mathop{\text{minimize}}_{\lambda, \bm{s}} & \displaystyle \lambda \epsilon + \dfrac{1}{N} \sum_{i=1}^N s_i \\[5mm]
        \displaystyle \text{subject to} & \displaystyle l_{\bbeta}(\bx^i, \bz, +1) - \lambda \kappa \cdot \indicate{y^i \neq 1} - \lambda d_{\text{C}} (\bz,\bz^i)  \leq s_i &  \forall i \in [N], \; \forall \bz \in \mathbb{C} \\
        & \displaystyle l_{\bbeta}(\bx^i, \bz, -1) - \lambda \kappa \cdot \indicate{y^i \neq -1} - \lambda d_{\text{C}} (\bz,\bz^i)  \leq s_i &  \forall i \in [N], \; \forall \bz \in \mathbb{C} \\
        & \displaystyle \left \lVert \bm{\beta}_{\text{N}} \right \rVert_* \leq \lambda \\
        & \displaystyle \lambda \geq 0, \;\; \bm{s} \in \mathbb{R}^N.
    \end{array}
\end{equation*}
For $y^i = +1$, the identities
\begin{align*}
    l_{\bbeta}(\bx^i, \bz, +1) - \lambda \kappa \cdot \indicate{y^i \neq 1} \;\; &= \;\; l_{\bbeta}(\bx^i, \bz, y^i) \\
    l_{\bbeta}(\bx^i, \bz, -1) - \lambda \kappa \cdot \indicate{y^i \neq -1} \;\; &= \;\; l_{\bbeta}(\bx^i, \bz, -y^i) - \lambda \kappa
\end{align*}
hold; similar identities hold for $y^i = -1$. The above optimization problem thus simplifies to
\begin{equation}\label{eq:reform_for_absence_of_regularizer_proof}
    \begin{array}{l@{\quad}l@{\qquad}l}
        \displaystyle \mathop{\text{minimize}}_{\lambda, \bm{s}} & \displaystyle \lambda \epsilon + \dfrac{1}{N} \sum_{i=1}^N s_i \\[5mm]
        \displaystyle \text{subject to} & \displaystyle l_{\bbeta}(\bx^i, \bz, y^i) - \lambda d_{\text{C}} (\bz,\bz^i)  \leq s_i &  \forall i \in [N], \; \forall \bz \in \mathbb{C} \\
        & \displaystyle l_{\bbeta}(\bx^i, \bz, -y^i) - \lambda\kappa - \lambda d_{\text{C}} (\bz,\bz^i)  \leq s_i &  \forall i \in [N], \; \forall \bz \in \mathbb{C} \\
        & \displaystyle \left \lVert \bm{\beta}_{\text{N}} \right \rVert_* \leq \lambda \\
        & \displaystyle \lambda \geq 0, \;\; \bm{s} \in \mathbb{R}^N.
    \end{array}
\end{equation}
Plugging this problem into the overall optimization problem over $\bm{\beta} \in \mathbb{R}^{1 + n + k}$ therefore yields
\begin{equation}\label{eq:conehead_equiv_formulation}
    \begin{array}{l@{\quad}l@{\qquad}l}
        \displaystyle \mathop{\text{minimize}}_{\bm{\beta}, \lambda, \bm{s}} & \displaystyle \lambda \epsilon + \dfrac{1}{N} \sum_{i=1}^N s_i \\[5mm]
        \displaystyle \text{subject to} & \displaystyle l_{\bbeta}(\bx^i, \bz, y^i) - \lambda d_{\text{C}} (\bz,\bz^i)  \leq s_i &  \forall i \in [N], \; \forall \bz \in \mathbb{C} \\
        & \displaystyle l_{\bbeta}(\bx^i, \bz, -y^i) - \lambda\kappa - \lambda d_{\text{C}} (\bz,\bz^i)  \leq s_i &  \forall i \in [N], \; \forall \bz \in \mathbb{C} \\
        & \displaystyle \left \lVert \bm{\beta}_{\text{N}} \right \rVert_* \leq \lambda \\
        & \multicolumn{2}{l}{\displaystyle \mspace{-8mu} \bm{\beta} = (\beta_0, \bm{\beta}_{\text{N}}, \bm{\beta}_{\text{C}}) \in \mathbb{R}^{1 + n + k}, \;\; \lambda \geq 0, \;\; \bm{s} \in \mathbb{R}^N.}
    \end{array}
\end{equation}
Finally, we discuss how to reformulate the \emph{softplus constraints} (\emph{i.e.}, convex constraints with a log-loss function on the left-hand side and a linear function on the right-hand side) as exponential cone constraints. To this end, recall that the \textit{exponential cone} is defined as
\begin{align*}
    \mathcal{K}_{\exp} := \mathrm{cl} \left(\left\{(a,b,c) \ : \ a \geq b \exp(c / b), \ a > 0, \ b > 0 \right\}\right) \subset \mathbb{R}^3,
\end{align*}
where $\mathrm{cl}(\cdot)$ denotes the closure. Observe further that
\begin{align*}
    \log(1 + \exp(c)) \leq a \iff 1 + \exp(c) \leq \exp(a) \iff \exp(-a) + \exp(c - a) \leq 1.
\end{align*}
Using the auxiliary variables $u, v$, we can reformulate this constraint as
\begin{align*}
    \left[
    \begin{array}{l}
        u + v \leq 1 \\
        \exp(-a) \leq u \\
        \exp(c - a) \leq v
    \end{array}
    \right]
    \quad \iff \quad
    \left[
    \begin{array}{l}
        u + v \leq 1 \\
        (u, 1, -a) \in \mathcal{K}_{\exp} \\
        (v, 1, c-a) \in \mathcal{K}_{\exp}
    \end{array}
    \right],
\end{align*}
where the second system of equations uses the definition of $\mathcal{K}_{\exp}$. Applying this reformulation to both softplus constraints in our minimization problem results in problem~\eqref{eq:cone_head_reformulation} and thus concludes the proof of the theorem.
$\Box$\\

\begin{rem}[Exponential Cone Reformulation]
    The use of exponential conic constraints has a significant impact on the theoretical complexity of problem~\eqref{eq:cone_head_reformulation}. We refer to \cite{serrano2015algorithms} for an overview of exponential conic programs and to \cite{mosek2010mosek} for modeling techniques, respectively. We also note that the difference in theoretical complexity carries over to a significant difference in practical solvability: In our experiments, we observe that using the exponential cone solver of MOSEK drastically speeds up the solution times of our DR logistic regression problem.
\end{rem}

{\bf Proof of Theorem \ref{thm:complexity}:}
In view of the first statement, we recall that the strongly NP-hard integer programming problem is defined as follows \cite{GJ79:ComputersIntractability}:
\begin{center}
    \fbox{\parbox{10.25cm}{
    {\centering \textsc{0/1 Integer Programming.} \\}
    \textbf{Instance.} Given are $\bm{F} \in \mathbb{Z}^{\mu \times \nu}$, $\bm{g} \in \mathbb{Z}^\mu$, $\bm{c} \in \mathbb{Z}^\nu$, $\zeta \in \mathbb{Z}$. \\
    \textbf{Question.} Is there a vector $\bm{\chi} \in \left\{ 0, 1 \right\}^\nu$ such that $\bm{F} \bm{\chi} \leq \bm{g}$ and $\bm{c}^\top \bm{x} \geq \zeta$?}
    }
\end{center}
{\color{black}Here, $\mathbb{Z}$ denotes the set of integers.} We claim that the answer to the integer programming problem is affirmative if and only if the optimal objective value for the DR logistic regression~\eqref{eq:the_mother_of_all_problems} with $n = 0$, $m = \nu$, $k_1 = \ldots = k_m = 2$, $N = 1$ and the loss function
\begin{equation*}
    l_{\bm{\beta}} (\bm{z}, y)
    \; = \;
    \begin{cases}
        \bm{c}^\top \bm{z} & \text{if } \bm{F} \bm{z} \leq \bm{g}, \\
        -\mathrm{M} & \text{otherwise}
    \end{cases}
\end{equation*}
and $\mathrm{M}$ and $\epsilon$ sufficiently large is greater than or equal to $\zeta$. This then immdiately implies that the DR logistic regression~\eqref{eq:the_mother_of_all_problems} is strongly NP-hard for generic loss functions (which include the above loss function as a special case).

To see this, note that our reformulation from the proof of Theorem~\ref{thm:coneheads}, under the assumption that $n = 0$ and $N = 1$, shows that problem~\eqref{eq:the_mother_of_all_problems} is equivalent to
\begin{equation*}
    \begin{array}{l@{\quad}l@{\qquad}l}
        \displaystyle \mathop{\text{minimize}}_{\bm{\beta}, \lambda, s} & \displaystyle \lambda \epsilon + s \\[5mm]
        \displaystyle \text{subject to} & \displaystyle l_{\bbeta}(\bz, y^1) - \lambda d_{\text{C}} (\bz,\bz^1)  \leq s &  \forall \bz \in \mathbb{C} \\
        & \displaystyle l_{\bbeta}(\bz, -y^1) - \lambda\kappa - \lambda d_{\text{C}} (\bz,\bz^1)  \leq s & \forall \bz \in \mathbb{C} \\
        & \multicolumn{2}{l}{\displaystyle \mspace{-8mu} \bm{\beta} = (\beta_0, \bm{\beta}_{\text{C}}) \in \mathbb{R}^{k + 1}, \;\; \lambda \geq 0, \;\; s \in \mathbb{R}.}
    \end{array}
\end{equation*}

Assuming that $\epsilon$ is sufficiently large, the optimal $\lambda$ vanishes, and the problem simplifies further to

\begin{equation*}
    \begin{array}{cll}
        \displaystyle \mathop{\text{minimize}}_{\bm{\beta}, s} & \displaystyle s &  \\[5mm]
        \displaystyle \text{subject to} & \bm{c}^\top \bm{z} \leq s & \forall \bz \in \mathbb{C} \, : \, \bm{F} \bm{z} \leq \bm{g} \\
        & -\mathrm{M} \leq s & \text{if } \exists \bz \in \mathbb{C} \, : \, \bm{F} \bm{z} \not\leq \bm{g} \\
        & \multicolumn{2}{l}{\displaystyle \mspace{-8mu} \bm{\beta} = (\beta_0, \bm{\beta}_{\text{C}}) \in \mathbb{R}^{k + 1}, \;\; s \in \mathbb{R}} 
    \end{array}
\end{equation*}
where we have also plugged in the aforementioned loss function. One readily observes that the optimal value of this problem equals $\max \{ \bm{c}^\top \bm{\chi} \, : \, \bm{F} \bm{\chi} \leq \bm{g}, \; \bm{\chi} \in \{ 0, 1 \}^\nu \}$ whenever $\{ \bm{F} \bm{\chi} \leq \bm{g}, \; \bm{\chi} \in \{ 0, 1 \}^\nu \}$ is non-empty and $-\mathrm{M}$ otherwise. In particular, the optimal value of the problem is greater than or equal to $\zeta$ if and only if the answer to the integer programming problem is affirmative.

In view of the second statement, we note that the feasible region of problem~\eqref{eq:the_mother_of_all_problems} can be circumscribed by a convex body as per Definition~2.1.16 of \cite{GLS88:geom_algos}, and that Algorithm~\ref{alg:most_violated} in Section~\ref{sec:cac_generation} constitutes a weak separation oracle for problem~\eqref{eq:the_mother_of_all_problems}. The statement thus follows from Corollary~4.2.7 of \cite{GLS88:geom_algos}.
\qed \\

{\bf Proof of Theorem \ref{thm:it_aint_regularized}:}
Consider a class of instances of problem~\eqref{eq:the_mother_of_all_problems} with a Wasserstein radius $\epsilon < 1$, $n = 1$ numerical feature, and $m = 1$ categorical feature with $k_1 = 2$ so that the categorical feature is binary. We have $N = 1$ sample with $z^1_1 = 0$ and $y^1_1 = -1$. This instance class is therefore parameterized by the value of the numerical feature $x^1_1$ in the data sample. We now derive the optimal objective value of problem~\eqref{eq:the_mother_of_all_problems} analytically as a function of $x^1_1$, and show that this functional dependence on $x^1_1$ cannot be represented by the objective function of \emph{any} regularized logistic regression.

For our instance class, the reformulation of our DR logistic regression~\eqref{eq:the_mother_of_all_problems} simplifies to
\begin{equation}\label{counter-example}
    \begin{array}{l@{\quad}l@{\qquad}l}
        \displaystyle \mathop{\text{minimize}}_{\lambda, s_1,\bbeta} & \displaystyle \lambda \epsilon + s_1 \\
        \displaystyle \text{subject to} & \displaystyle l_{\bbeta}(x^1_1, z, -1) - \lambda z \leq s_1 & \forall z \in \mathbb{B} \\
        & \displaystyle l_{\bbeta} (x^1_1, z, 1) - \lambda \kappa - \lambda z \leq s_1 & \forall z \in \mathbb{B} \\
        & \displaystyle \lambda \geq \lvert \bbeta_{\text{N}} \rvert, \;\; s_1 \in \mathbb{R}.
    \end{array}
\end{equation}
Assuming further that $\kappa$ approaches $\infty$, the constraint $\lambda \geq 0$ (implied by the constraint $\lambda \geq \lvert \bbeta_{\text{N}} \rvert)$ implies that the second constraint is redundant. Removing this second constraint and substituting $s_1$ into the objective function results in
\begin{equation}  \label{eq:counter-example2}
    \begin{array}{l@{\quad}l@{\qquad}l}
        \displaystyle \mathop{\text{minimize}}_{\lambda,\bbeta} & \displaystyle \lambda \epsilon + \max \left\{ l_{\bbeta}(x^1_1, 0, -1), \; l_{\bbeta}(x^1_1, 1, -1) - \lambda \right\} \\
        \displaystyle \text{subject to} &\displaystyle \lambda \geq \lvert \bbeta_{\text{N}} \rvert.
    \end{array}
\end{equation}
We now break the minimization over $(\lambda, \,\bbeta)$ in (\ref{eq:counter-example2}) into a minimization over $\lambda$ first followed by the minimization over $\bbeta$. That leads to re-writing (\ref{eq:counter-example2}) as
\begin{equation} \label{eq:counter-example3}
    \begin{array}{l@{\quad}l@{\qquad}l}
        \displaystyle \mathop{\text{minimize}}_{\bbeta} & f(\bbeta,x^1_1)
    \end{array}
\end{equation}
where
\begin{equation} \label{eq:counter-example4}
    f(\bbeta,x_1^1) = \left\{ \begin{array}{l@{\quad}l@{\qquad}l}
        \displaystyle \mathop{\text{minimize}}_{\lambda} & \lambda \epsilon + \max \left\{ l_{\bbeta}(x^1_1, 0, -1), \; l_{\bbeta}(x^1_1, 1, -1) - \lambda \right\} \\
        \displaystyle \text{subject to} &\displaystyle \lambda \geq \lvert \bbeta_{\text{N}} \rvert
    \end{array} \right..
\end{equation}
Since the Wasserstein radius satisfies $\epsilon < 1$, it is possible to achieve a reduction in the objective function choosing values larger than $\lvert \bbeta_{\text{N}} \rvert$ for $\lambda$. Focusing on the minimization over $\lambda$ in (\ref{eq:counter-example4}), we recognize that the optimal solution is $\lambda^\star = g(\bbeta,x^1_1):= l_{\bbeta}(x^1_1, 1, -1) - l_{\bbeta}(x^1_1, 0, -1) $ if $g(\bbeta,x^1_1) \geq \lvert \bbeta_{\text{N}} \rvert$, and $\lambda^\star = \lvert \bbeta_{\text{N}} \rvert$ otherwise. Substituting this optimal value for $\lambda$ into (\ref{eq:counter-example4}), we obtain
\begin{align*}
    f(\bbeta,x_1^1) &= \begin{cases}
        \displaystyle \epsilon \cdot g(\bbeta,x^1_1) + l_{\bbeta}(x^1_1, 0, -1) & \text{if} \;\; \lvert \bbeta_{\text{N}} \rvert \leq g(\bbeta,x^1_1) \\
        \displaystyle \epsilon \cdot \lvert \bbeta_{\text{N}} \rvert + l_{\bbeta}(x^1_1, 0, -1) & \text{otherwise}
    \end{cases}\\
    & = l_{\bbeta}(x^1_1, 0, -1) + h(\bbeta,x^1_1),
\end{align*}
where $$h(\bbeta,x^1_1) := \epsilon \cdot \begin{cases}
        \displaystyle  g(\bbeta,x^1_1)  & \text{if} \;\; \lvert \bbeta_{\text{N}} \rvert \leq g(\bbeta,x^1_1) \\
        \displaystyle  \lvert \bbeta_{\text{N}} \rvert& \text{otherwise.}
    \end{cases} = \epsilon \cdot \max \left\{ g(\bbeta,x^1_1)\,,\, \lvert \bbeta_{\text{N}} \rvert\right\}. $$
The DR logistic regression problem (\ref{eq:counter-example3}) now becomes
\begin{equation*}
    \begin{array}{l@{\quad}l@{\qquad}l}
        \displaystyle \mathop{\text{minimize}}_{\bbeta} & l_{\bbeta}(x^1_1, 0, -1) + h(\bbeta,x^1_1).
    \end{array}
\end{equation*}
Using the definition of the empirical distribution, the regularized logistic regression for the instance problem has the form
\begin{equation}\label{counter-example-regularized}
    \begin{array}{l@{\quad}l@{\qquad}l}
        \displaystyle \mathop{\text{minimize}}_{\bbeta} & l_{\bbeta}(x^1_1, 0, -1) + \mathfrak{R} (\bm{\beta}).
    \end{array}
\end{equation}
The proof concludes by noting that $\mathfrak{R} (\bm{\beta})$ remains constant across all the instances parameterized by $x_1^1$ as a regularizer is data-independent. Hence, the objective function of the regularized logistic regression cannot capture the dependency on $x^1_1$ that we observe in $h(\bbeta,x^1_1)$ (it is easy to confirm that $h(\bbeta,x^1_1)$ is not constant in $x^1_1$). We further notice that the objective function of DR logistic regression is a non-smooth function of $x^1_1$ while the objective function of regularized logistic regression is a smooth function of $x^1_1$ by construction. \qed

\blue{
\begin{rem} \label{rem:NotReg}
It is informative to see how the counter-example in the proof of Theorem \ref{thm:it_aint_regularized} breaks down if we assume there are no categorical features. In this case we use  the ground metric $d (\bxi,\bxi') := \lVert \bx-\bx' \rVert + \kappa \cdot \indicate{y \neq y'}$. The constraints in the problem formulation \eqref{eq:conehead_equiv_formulation} will no longer be indexed over $\bz$ and all terms involving $\bz$ and $d_{\text{C}} (\bz,\bz^i)$ will disappear. The reformulation with only numerical features then coincides with that of Corollary 17 in \cite{JMLR:v20:17-633}. Our instance class problem from the proof of Theorem 3 above will then be
    \begin{equation*}
    \begin{array}{l@{\quad}l@{\qquad}}
        \displaystyle \mathop{\text{minimize}}_{\lambda, s_1,\bbeta} & \displaystyle \lambda \epsilon + s_1 \\
        \displaystyle \text{subject to} & \displaystyle l_{\bbeta}(x^1_1, -1) \leq s_1 \\
        & \displaystyle l_{\bbeta} (x^1_1, 1) - \lambda \kappa \leq s_1  \\
        & \displaystyle \lambda \geq \lvert \bbeta_{\text{\emph{N}}} \rvert, \;\; s_1 \in \mathbb{R}.
    \end{array}
\end{equation*}
Taking $\kappa$ to infinity, we see the second constraint becomes redundant. As $\lambda$ only appears in one term in the objective function, the optimal solution is $\lambda^\star = \lvert \bbeta_{\text{N}} \rvert $. Substituting for $s_1$ in the resulting objective becomes
\begin{equation*}
\begin{array}{l@{\quad}l@{\qquad}l}
        \displaystyle \mathop{\text{minimize}}_{\bbeta} & l_{\bbeta}(x^1_1, -1) + \epsilon \lvert \bbeta_{\text{\emph{N}}} \rvert
    \end{array}
\end{equation*}
which is in the form of regularized logistic regression \eqref{counter-example-regularized}. This of course is consistent with \cite{NIPS2015} who showed that DR logistic regression without categorical features could be formulated as a regularized logistic regression problem.
\end{rem}}

{\bf Proof of Theorem \ref{thm:cac_scheme}:}
It follows from the proof of Theorem~\ref{thm:coneheads} that the optimization problems solved inside the while-loop of Algorithm~\ref{alg:cac_gen} are equivalent to the relaxations
\begin{equation*}
    \begin{array}{l@{\quad}l@{\qquad}l}
        \displaystyle \mathop{\text{minimize}}_{\bm{\beta}, \lambda, \bm{s}} & \displaystyle \lambda \epsilon + \dfrac{1}{N} \sum_{i=1}^N s_i \\[5mm]
        \displaystyle \text{subject to} & \displaystyle l_{\bbeta}(\bx^i, \bz, y^i) - \lambda d_{\text{C}} (\bz,\bz^i)  \leq s_i &  \forall (i, \bz) \in \mathcal{W}^+ \\
        & \displaystyle l_{\bbeta}(\bx^i, \bz, -y^i) - \lambda\kappa - \lambda d_{\text{C}} (\bz,\bz^i)  \leq s_i &  \forall (i, \bz) \in \mathcal{W}^- \\
        & \displaystyle \left \lVert \bm{\beta}_{\text{N}} \right \rVert_* \leq \lambda \\
        & \multicolumn{2}{l}{\displaystyle \mspace{-8mu} \bm{\beta} = (\beta_0, \bm{\beta}_{\text{N}}, \bm{\beta}_{\text{C}}) \in \mathbb{R}^{1 + n + k}, \;\; \lambda \geq 0, \;\; \bm{s} \in \mathbb{R}^N}
    \end{array}
\end{equation*}
of problem~\eqref{eq:conehead_equiv_formulation} that is itself equivalent to our DR logistic regression problem~\eqref{eq:cone_head_reformulation}; the only difference between our relaxations above and problem~\eqref{eq:conehead_equiv_formulation} is that the index sets $(i, \bm{z}) \in [N] \times \mathbb{C}$ in the first two constraint sets of~\eqref{eq:conehead_equiv_formulation} are replaced with the subsets $\mathcal{W}^+$ and $\mathcal{W}^-$ above. This shows that each value $\mathrm{LB}_t$ indeed constitutes a lower bound on the optimal value of problem~\eqref{eq:cone_head_reformulation}, and that the sequence $\{ \mathrm{LB}_t \}_t$ of lower bounds is monotonically non-decreasing. 

To see that the sequence $\{ \mathrm{UB}_t \}_t$ bounds the optimal value of our DR logistic regression problem~\eqref{eq:cone_head_reformulation} from above, note that a maximum constraint violation of $\vartheta^+$ in the first constraint set of~\eqref{eq:cone_head_reformulation} implies that 
\begin{align*}
    \left[
    \begin{array}{l}
        \displaystyle u_{i,\bz}^{+} + v_{i,\bz}^{+} \leq 1 + \vartheta^+ \\
        \displaystyle (u_{i,\bz}^{+}, 1, -s_i - \lambda d_\text{C}(\bz,\bz^i)) \in \mathcal{K}_{\exp} \\
        \displaystyle (v_{i,\bz}^{+}, 1, -y^i\bbeta_{\text{N}}{}^\top \bx^i -y^i\bbeta_{\text{C}}{}^\top \bz - y^i \beta_0 -s_i - \lambda d_\text{C}(\bz,\bz^i)) \in \mathcal{K}_{\exp},
    \end{array}
    \right]
    \quad \forall (i, \bz) \in [N] \times \mathbb{C},
\end{align*}
and, by the definition of the exponential cone, this means that for all $(i, \bz) \in [N] \times \mathbb{C}$, we have
\begin{align*}
    \exp \left[ -s_i - \lambda d_\text{C}(\bz,\bz^i) \right] + \exp \left[ -y^i\bbeta_{\text{N}}{}^\top \bx^i -y^i\bbeta_{\text{C}}{}^\top \bz - y^i \beta_0 -s_i - \lambda d_\text{C}(\bz,\bz^i) \right] \leq 1 + \vartheta^+.
\end{align*}
Dividing both sides by $1 + \vartheta^+ = \exp [\log (1 + \vartheta^+)]$, the inequality becomes
\begin{align*}
    \exp \left[ -s_i' - \lambda d_\text{C}(\bz,\bz^i) \right] + \exp \left[ -y^i\bbeta_{\text{N}}{}^\top \bx^i -y^i\bbeta_{\text{C}}{}^\top \bz - y^i \beta_0 -s_i' - \lambda d_\text{C}(\bz,\bz^i) \right] \leq 1,
\end{align*}
where $s_i' := s_i + \log(1 + \vartheta^+)$. The (potentially) infeasible solution to the relaxed problem of Algorithm~\ref{alg:cac_gen} thus allows us to construct a solution to problem~\eqref{eq:cone_head_reformulation} that satisfies all members of the first constraint set by replacing $s_i$ with $s_i' := s_i + \log(1 + \vartheta^+)$, $i \in [N]$. Compared to the solution of the relaxed problem, the objective value of the newly created solution increased by $\log(1 + \vartheta^+)$. A similar argument can be made for the members of the second constraint set, which shows that a solution to problem~\eqref{eq:cone_head_reformulation} satisfying both constraint sets can be constructed by increasing the objective value by no more than $\log(1 +\max \{\vartheta^+, \vartheta^-\})$. This shows the validity of the upper bound $\theta^\star + \log(1 +\max \{\vartheta^+, \vartheta^-\})$. The fact that the sequence $\{ \mathrm{UB}_t \}_t$ of upper bounds is monotonically non-increasing, on the other hand, holds by construction since we take the minimum of the derived upper bound ($\theta^\star + \log(1 + \max \{\vartheta^+, \vartheta^-\}$) and the upper bound $\mathrm{UB}_{t-1}$ of the previous iteration.

To see that Algorithm~\ref{alg:cac_gen} terminates in finite time, note that every iteration $t$ either identifies a violated constraint $(i, \bm{z}) \in [([N] \times \mathbb{C}) \setminus \mathcal{W}^+] \cup [([N] \times \mathbb{C}) \setminus \mathcal{W}^-]$ that is subsequently added to $\mathcal{W}^+$ and/or $\mathcal{W}^-$ (and will thus never again be identified as violated for the same constraint set), or the update $\mathrm{LB}_t = \mathrm{UB}_t = \theta^\star$ is conducted, in which case the algorithm terminates in the next iteration. Finite termination thus holds since the set $[N] \times \mathbb{C}$ of potentially violated constraints is finite.
\qed

\subsubsection*{\blue{Additional Intuition Behind Algorithm~\ref{alg:most_violated}}} 
In the combinatorial optimization problem (\ref{eq:CombOp1}), the decision variables $\bm{z} \in \mathbb{C}$ only appear in (monotone transformations of) the terms $-y^i\bbeta_{\text{C}}{}^\top \bz$ and $-\lambda d_\text{C} (\bz,\bz^i)$. Moreover, the expression $[d_\text{C} (\bz,\bz^i)]^p$ can only attain one of the values $\delta \in \{ 0, \ldots, m \}$ (\emph{cf.}~Definition~\ref{def:metric}). Conditioning on each possible value of $\delta$, which records the number of categorical features along which $\bz$ and $\bz^i$ disagree, we can therefore maximize the constraint violation by maximizing $-y^i\bbeta_{\text{C}}{}^\top \bz$ along all $\bm{z} \in \mathbb{C}$ that differ from $\bm{z}^i$ in exactly $\delta$ categorical features. Finally, since $-y^i\bbeta_{\text{C}}{}^\top \bz$ is linearly separable, that is, $-y^i\bbeta_{\text{C}}{}^\top \bz = -y^i\bbeta_{\text{C}, 1}{}^\top \bz_1 - \ldots - y^i\bbeta_{\text{C}, m}{}^\top \bz_m$, we can choose the categorical features along which $\bm{z}$ and $\bm{z}^i$ disagree iteratively by considering each expression $-y^i\bbeta_{\text{C}}{}^\top \bz$ separately and choosing the disagreeing feature values greedily based on their individual contribution to the overall constraint violation. \medskip

{\bf Proof of Theorem~\ref{thm:algo_complex}}
Recall that Algorithm~\ref{alg:most_violated} aims to solve the optimization problem
\begin{equation*}
    \mspace{-25mu}
    \begin{array}{l@{\quad}l}
        \displaystyle \mathop{\text{maximize}}_{\bm{z}} & \displaystyle \min_{u^{+}, v^{+}} \left\{ u^{+} +  v^{+} \, : \, \left[ \begin{array}{l}
            \displaystyle (u^{+}, 1, -s_i - \lambda d_\text{C} (\bz,\bz^i)) \in \mathcal{K}_{\exp}  \\
            \displaystyle (v^{+}, 1, -y^i\bbeta_{\text{N}}{}^\top \bx^i -y^i\bbeta_{\text{C}}{}^\top \bz - y^i \beta_0 -s_i - \lambda d_\text{C} (\bz,\bz^i)) \in \mathcal{K}_{\exp}
        \end{array}
        \right] \right\} \\[4.5mm]
        \displaystyle \text{subject to} & \displaystyle \bm{z} \in \mathbb{C}.
    \end{array}
\end{equation*}
It immediately follows from the definition of $\mathcal{K}_{\exp}$ that the above problem can be represented as
\begin{equation*}
    \begin{array}{l@{\quad}l}
        \displaystyle \mathop{\text{maximize}}_{\bm{z}} & \displaystyle \exp \left[ -s_i - \lambda d_\text{C} (\bz,\bz^i) \right] + \exp \left[ -y^i\bbeta_{\text{N}}{}^\top \bx^i -y^i\bbeta_{\text{C}}{}^\top \bz - y^i \beta_0 -s_i - \lambda d_\text{C} (\bz,\bz^i) \right] \\
        \displaystyle \text{subject to} & \displaystyle \bm{z} \in \mathbb{C}.
    \end{array}
\end{equation*}
In this problem, the optimization variable $\bm{z}$ appears inside the exponential terms as $-y^i\bbeta_{\text{C}}{}^\top \bz$ and as $-\lambda d_\text{C} (\bz,\bz^i)$. Since increasing the value of one term may decrease the value of the other term, the above problem does not admit a trivial solution. We next discuss how the above maximization problem can be decomposed into $m+1$ sub-problems, each of which can be solved efficiently.

Although $\bz$ may take exponentially many values, notice that the expression $d_\text{C} (\bz,\bz^i) = ( \sum_{j \in [m]} \indicate{\bm{z}_j \neq \bm{z}^i_j})^{1 / p}$ counts the number of disagreements between the features of $\bm{z}$ and $\bm{z}^i$, which takes a value from the set $\{0,1,\ldots, m\}$. Hence, Algorithm~\ref{alg:most_violated} decomposes the above combinatorial optimization problem into $m+1$ sub-problems in which the number $\delta \in \{0,1, \ldots,m \}$ of disagreements between $\bm{z}$ and $\bm{z}^i$ is fixed and we only need to maximize $-y^i\bbeta_{\text{C}}{}^\top \bz $ for a given number $\delta$ of disagreements. If we can solve each of these sub-problem efficiently, then we can compare the constraint violations of the optimal $\bz$ to each sub-problem and pick the largest one as the most-violated constraint. We next investigate how each of the $m+1$ sub-problems can be solved efficiently.

The sub-problem corresponding to a fixed $\delta \in \{0,1,\ldots,m\}$ can be formulated as
\begin{align*}
    \begin{array}{cl}
    \underset{\bz}{\mathrm{maximize}}  &  -y^i\bbeta_{\text{C}}{}^\top \bz \\
    \mathrm{subject\;to}  & d_\text{C} (\bz,\bz^i) = \delta \\
    & \bz \in \mathbb{C}.
    \end{array}
\end{align*}
Denote by $\mathcal{J} \subseteq [m]: \ |\mathcal{J}| = \delta$ the set of features where $\bm{z}$ disagrees with $\bz^i$ at optimality. Denote further by $\bz_j^\star$ the maximizer of $- y^i \cdot \bbeta_{\text{C},j}{}^\top \bz_j$ over $\bz_j \in \mathbb{C}(k_j)\setminus \{ \bz^i_j \}$ (\emph{cf.}~the first step of Algorithm~\ref{alg:most_violated}). The above sub-problem can then be written as 
\begin{align*}
    \begin{array}{cl}
    \underset{\mathcal{J}}{\mathrm{maximize}}  & \displaystyle \left(\sum_{j = 1}^m  - y^i \cdot \bbeta_{\text{C},j}{}^\top \bz_j^i\right) + \left(\sum_{j \in \mathcal{J}} (- y^i \cdot \bbeta_{\text{C},j}{}^\top \bz_j^\star)  -  (- y^i \cdot \bbeta_{\text{C},j}{}^\top \bz_j^i) \right) \\[5mm]
    \mathrm{subject\;to}  & \mathcal{J} \subseteq [m],\; |\mathcal{J}|  = \delta.
    \end{array}
\end{align*}
The first summation above is constant, and the solution of this problem can therefore be uniquely identified from the solution of the following problem:
\begin{align*}
    \begin{array}{cl}
    \underset{\mathcal{J}}{\mathrm{maximize}}  & \displaystyle \sum_{j \in \mathcal{J}} (- y^i \cdot \bbeta_{\text{C},j}{}^\top (\bz_j^\star - \bz_j^i))  \\[5mm]
    \mathrm{subject\;to}  & \mathcal{J} \subseteq [m],\; |\mathcal{J}|  = \delta.
    \end{array}
\end{align*}
This problem can be solved greedily by selecting the $\delta$ largest values  of $(- y^i \cdot \bbeta_{\text{C},j}{}^\top (\bz_j^\star - \bz_j^i))$ across all $j \in [m]$, and those $j$ indexes will be included in $\mathcal{J}$ (\emph{cf.}~the ordering $\pi: [m] \mapsto [m]$ and the selection according to $\pi(j) \leq \delta$ in Algorithm~\ref{alg:most_violated}). Instead of sorting $(- y^i \cdot \bbeta_{\text{C},j}{}^\top (\bz_j^\star - \bz_j^i))$, $j \in [m]$, for each sub-problem $\delta \in \{ 0,1,\ldots,m \}$, Algorithm~\ref{alg:most_violated} sorts those values once in the beginning.

The discussion so far establishes the correctness of Algorithm~\ref{alg:most_violated}. In view of its runtime, we note that the aforementioned sorting takes time $\mathcal{O}(m \log m)$ as each value $(- y^i \cdot \bbeta_{\text{C},j}{}^\top (\bz_j^\star - \bz_j^i))$, $j \in [m]$ can be computed in constant time, determining $\bz_j^\star$ for all $j \in [m]$ takes time $\mathcal{O}(k)$ since the variables corresponding to each feature vanish in all but one (known) location, and determining the set $\mathcal{J}$ for each $\delta \in \{ 0,1,\ldots,m\}$ takes time $\mathcal{O} (n + m^2)$ since the expression $-y^i \bm{\beta}_\text{N}{}^\top \bm{x}^i$ only needs to be computed once.
\qed

{\bf Proof of Theorem \ref{thm:finite_sample_guarantee}:}
The statement of the theorem follows immediately from Theorems~18 and~19 of \cite{kmns19}.
\qed

{\bf Proof of Lemma~\ref{lem:growth_condition}:}
The assumption in the statement of the lemma allows us to choose $M \in \mathbb{R}$ such that $\mathcal{H} \subseteq [-M, +M]^{1 + n + k}$. We then show that there is $\bm{\xi}^0 \in \Xi$ and $C > 0$ such that
\begin{equation*}
    l_{\bm{\beta}} (\bm{\xi}) \leq C [1 + d (\bm{\xi}, \bm{\xi}^0)] \quad \forall \bm{\beta} \in [-M, M]^{1 + n + k}, \; \forall \bm{\xi} \in \Xi,
\end{equation*}
that is, for all $\bm{\beta} \in [-M, M]^{1 + n + k}$ and all $\bm{\xi} \in \Xi$ we have
\begin{align*}
    & \log \left( 1 + \exp \left[ -y \cdot \left( \beta_0 + \bm{\beta}_{\text{N}}{}^\top \bm{x} + \bm{\beta}_{\text{C}}{}^\top \bm{z} \right) \right] \right) \\
    \leq \;\; &
    C \left[ 1 +  \lVert \bx-\bx^0 \rVert + \left( \sum_{i \in [m]} \indicate{\bm{z}_i \neq \bm{z}^0_{i}} \right)^{1 / p} + \kappa \cdot \indicate{y \neq y^0} \right].
\end{align*}
To see this, fix any $\bxi^0 = (\mathbf{0}, \bz^0, y^0) \in \Xi$. Since $\kappa \cdot \indicate{y \neq y^0} \geq 0$ and $ \sum_{i \in [m]} \indicate{\bm{z}_i \neq \bm{z}^0_{i}} )^{1 / p} \geq 0$ on the right-hand side of the above inequality, it suffices to show that
\begin{equation*}
  \log \left( 1 + \exp \left[ -y \cdot \left( \beta_0 + \bm{\beta}_{\text{N}}{}^\top \bm{x} + \bm{\beta}_{\text{C}}{}^\top \bm{z} \right) \right] \right) \leq C \left[1 +  \lVert \bx \rVert \right] \quad  \forall \bm{\beta} \in [-M, M]^{1 + n + k}, \; \forall \bm{\xi} \in \Xi.
\end{equation*}
Note that the left-hand side of this inequality can be bounded from above by
\begin{align*}
    1 + \exp \left[ -y \cdot \left( \beta_0 + \bm{\beta}_{\text{N}}{}^\top \bm{x} + \bm{\beta}_{\text{C}}{}^\top \bm{z} \right) \right]
    \;\; \leq \;\; &
    1 + \exp \left[ \left| \beta_0 + \bm{\beta}_{\text{N}}{}^\top \bm{x} + \bm{\beta}_{\text{C}}{}^\top \bm{z} \right| \right] \\
    \leq \;\; &
    2\exp \left[\left|\beta_0 + \bm{\beta}_{\text{N}}{}^\top \bm{x} + \bm{\beta}_{\text{C}}{}^\top \bm{z} \right| \right].
\end{align*}
It is therefore sufficient to show that
\begin{equation*}
    \mspace{-20mu}
    \begin{array}{l@{\;\;}l@{\qquad}l}
        & \displaystyle
        \log(2\exp \left[\left|\beta_0 + \bm{\beta}_{\text{N}}{}^\top \bm{x} + \bm{\beta}_{\text{C}}{}^\top \bm{z} \right| \right]) \leq C[1 +  \lVert \bx \rVert]
        & \displaystyle
        \forall \bm{\beta} \in [-M, M]^{1 + n + k}, \\
        & & \forall \bm{\xi} \in \Xi \\
        \displaystyle \iff
        & \displaystyle
        \log(2) + \left|\beta_0 + \bm{\beta}_{\text{N}}{}^\top \bm{x} + \bm{\beta}_{\text{C}}{}^\top \bm{z} \right| \leq C[1 + \lVert \bx \rVert]
        & \displaystyle
        \forall \bm{\beta} \in [-M, M]^{1 + n + k}, \\
        & & \forall \bm{\xi} \in \Xi \\
        \displaystyle \iff
        & \displaystyle
        \log(2) + \max_{\bm{\beta} \in [-M, M]^{1 + n + k}} \left\{ \left|\beta_0 + \bm{\beta}_{\text{N}}{}^\top \bm{x} + \bm{\beta}_{\text{C}}{}^\top \bm{z} \right| \right\} \leq C[1 + \lVert \bx \rVert]
        & \displaystyle \forall \bm{\xi} \in \Xi \\
        \displaystyle \iff
        & \displaystyle \log(2) + (M + M \lVert \bx \rVert + M k) \leq C[1 + \lVert \bx \rVert]
        & \displaystyle
        \forall \bm{\xi} \in \Xi \\
        \displaystyle \iff
        & \displaystyle
        \log(2) + M(1 + k) + M\lVert \bx \rVert \leq C + C\lVert \bx \rVert
        & \displaystyle
        \forall \bm{\xi} \in \Xi.
    \end{array}
\end{equation*}
The last condition, however, is readily seen to be satisfied by any $C \geq \log(2) + M(1+ k)$.
\qed

{\bf Proof of Theorem \ref{thm:asymptotic_consistency}:}
The statement of the theorem follows directly from Theorem~20 of \cite{kmns19}.
\qed

{\bf Proof of Theorem \ref{thm:existence_sparse_distributions}:}
The existence of an optimal solution follows from Theorem~3 of \cite{YKW21:linear_optimization_wasserstein}, and the sparsity of the optimal solution is due to Theorem~4 of \cite{YKW21:linear_optimization_wasserstein}. Note that the assumptions of those theorems are satisfied since the Wasserstein ball $\mathfrak{B}_\epsilon (\widehat{\mathbb{P}}_N)$ is centered at the empirical distribution $\widehat{\mathbb{P}}_N$, which has finite moments by construction.
\qed

\end{document}